\newtheorem{theorem}{Theorem}[section]
\newtheorem{proposition}[theorem]{Proposition}
\newtheorem{corollary}[theorem]{Corollary}
\newtheorem{lemma}[theorem]{Lemma}
\newtheorem{remark}[theorem]{Remark}
\newtheorem{definition}[theorem]{Definition}
\numberwithin{equation}{section}
\newcommand{\pa}{\partial}
\renewcommand{\a }{\alpha }
\renewcommand{\b }{\beta }
\newcommand{\g }{\gamma}
\renewcommand{\l }{\lambda }
\renewcommand{\t }{\tau }
\newcommand{\R}{{\mathbb{R}}}
\newcommand{\LL}{\mathcal{L}}
\title{Uniqueness for local-nonlocal elliptic equations}
\author[S.\,Biagi]{Stefano Biagi} 
\author[G.\,Meglioli]{Giulia Meglioli} 
 \author[F.\,Punzo]{Fabio Punzo}
  \address[S.\,Biagi]{Dipartimento di Matematica
 \newline\indent Politecnico di Milano \newline\indent
 Via Bonardi 9, 20133 Milano, Italy}
 \email{stefano.biagi@polimi.it}
 \address[G.\,Meglioli]{Fakult\"at f\"ur Mathematik
 \newline\indent Universit\"at Bielefeld, \newline\indent
 33501, Bielefeld, Germany}
 \email{gmeglioli@math.uni-bielefeld.de}
 \address[F.\,Punzo]{Dipartimento di Matematica
 \newline\indent Politecnico di Milano \newline\indent
 Via Bonardi 9, 20133 Milano, Italy}
 \email{fabio.punzo@polimi.it}
\keywords{Fractional Laplacian. Mixed local-nonlocal operators. Uniqueness. Weighted Lebesgue spaces. Non-uniqueness.}
\subjclass[2010]{35R11, 35K67, 35J75.}
\date{}
\begin{document}
\begin{abstract}
We study mixed local and nonlocal elliptic equation with a variable coefficient $\rho$. Under suitable assumptions on the behaviour at infinity of $\rho$, we obtain uniqueness of solutions belonging to certain weighted Lebsgue spaces, with a weight depending on the coefficient $\rho$. The hypothesis on $\rho$ is optimal; indeed, when it fails we get nonuniqueness of solutions. We also investigate the parabolic counterpart of such equation.
\end{abstract}
\maketitle

\section{Introduction}
We are concerned with uniqueness of solutions, in suitable weighted Lebesgue spaces, to the following linear, local and nonlocal elliptic equation
\begin{equation}\label{elliptic}
\LL u -\rho(x)cu = \Delta u -(-\Delta)^{s} u - \rho c  u \,=\,0 \quad \textrm{in}\;\; \R^N.
\end{equation}
Here $\rho$ is a positive function depending only on the space variable, $(-\Delta)^s$ denotes the fractional Laplace operator of order $s\in (0,1)$ and $c$ is a nonnegative function defined in $\R^N$. Sometimes, such type of equations are called "mixed" due to the fact that the operator 
$\LL$ combines classical and fractional features.

\smallskip

We always assume that
\begin{equation}\tag{{\it $H_0$}}\label{h3}
\begin{aligned}
&\textrm{(i)} \,\rho\in C(\R^N);\\
&\textrm{(ii)} \;\text{there exist}\,\,\alpha\ge 0\,\,\text{and}\,\,C_0>0\,\,\text{such that}\,\,\\
&\quad \; \rho(x)\geq C_0 (1+|x|^2)^{-\frac{\alpha}2} \quad\text{for all}\,\,x\in \mathbb R^N;
\end{aligned}
\end{equation}
and that the function $c:\R^N\to\R$ is
\begin{equation}\label{h1}\tag{{\it $H_1$}}
c\in C(\R^N),\,\,\,c(x)\geq 0 \quad \textrm{for all}\;\; x\in \R^N\,.
\end{equation}

The operator appearing in equation \eqref{elliptic} is enjoying a constantly rising popularity in applied sciences, also with the aim of investigating the different role of a local and a nonlocal diffusion in concrete situations, see e.g. \cite{BD,EG}.
In parallel to the applied sciences, also in the mathematical framework, such kind of operators are more often considered and indeed the literature already presents many results. Here we cite some contribution without entering into further details: \cite{BDVV2,BDVV,BDVV4,BDVV3,BMV,BJK,CKSV,CKSV2,C, DFM,DEJ,JS}.
\medskip

To the best of our knowledge, this type of uniqueness results for equation \eqref{elliptic} which involves a mixed local and nonlocal operator were never obtained in the literature. On the other hand, there exists several results in the literature which deals with local or nonlocal elliptic equations. We now briefly recall some of such known results. In the framework of local operators, let us consider the following linear, elliptic, degenerate equation
\begin{equation}\label{02}
\it{L}u-cu=f \quad\quad\text{in}\,\,\,\Omega\,;
\end{equation}
where
$$
\it{L} u:= \sum_{i,j=1}^Na_{ij}\frac{\partial^2  u }{\pa x_i\pa x_j} +\sum_{i=1}^Nb_i\frac{\pa u}{\pa x_i}\,,
$$
with possibly unbounded coefficients $a_{ij}$, $b_i$, $c$ and function $f$.
Well-posedness of problem \eqref{02} has been intensively investigated both in the case of $\Omega=\R^N$ and $\Omega$ being a bounded domain, in this last case the problem is completed with suitable boundary conditions and the coefficients can be degenerate or singular at the boundary of the domain (see e.g. \cite{OR,P2,PTes} and the references therein). Furthermore, uniqueness of solutions to equations like
\begin{equation}\label{03}
-\Delta u - g( b,\,\nabla u ) +c\,u=0 \quad  \text{in}\,\,\,M,
\end{equation}
has been studied also on Riemannian manifolds, here we denote with $M$ a complete, non-compact, Riemannian manifold of infinite volume. More precisely, in \cite{Grig3} the author addresses the case of bounded solutions for $b\equiv0$, instead in \cite{MR} solutions are assumed to belong to a certain weighted $L^p$ space. The parabolic counterparts of \eqref{02} and \eqref{03} have also a wide background. We refer the interested reader to e.g \cite{AB,EKP,MontPun,PPT2,PoT,PTes} for problem posed on $\mathbb{R}^N$ and in bounded domains of $\mathbb{R}^N$ or to e.g. \cite{Grig} in the case of more general Riemannian manifolds.

\bigskip

Similarly, the case of well-posedness and regularity of solutions to equations with fractional diffusion has been addressed e.g. in \cite{ MePu,PV1,PV2}. In particular, in \cite{PV1} and in \cite{PV2}, it is studied uniqueness of solutions to the following elliptic equation
\begin{equation}\label{eq120}
(-\Delta)^s u +\rho\,cu  =0 \quad\text{in}\,\,\,\R^N\,.
\end{equation}
The authors investigate how $\rho$ affects uniqueness and nonuniqueness of solutions. Uniqueness is obtained for solutions belonging to $L^p_\psi(\R^N)$, for a suitable weight $\psi$, depending on the behavior of $\rho$ as $|x|\to +\infty$. 
In \cite{MePu}, equation \eqref{eq120} has been investigated in presence of a drift term, i.e.
\begin{equation}\label{eq120b}
(-\Delta)^s u  +\langle b,\,\nabla u\rangle+\rho\,cu=0 \quad  \text{in}\,\,\,\R^N,\\
\end{equation}
where $b:\R^N\to\R^N$ is a suitable given vector field. The authors show that the solution to  equation \eqref{eq120b} is unique in the class $L^p_{\psi}(\R^N)$ with $p\geq 1$ and
\begin{equation*}
\psi(x):=(1+|x|^2)^{-\frac{\beta}2}\;\,\; (x\in \R^N)\,.
\end{equation*}
for properly chosen $\beta>0$, provided that there exist $\sigma\le 1-\alpha$ and $K>0$ such that
$$
\left\langle b(x), \frac{x}{|x|}\right\rangle\le K(1+|x|)^{\sigma}\quad\text{for all}\,\,x\in \R^N.
$$
Moreover, also uniqueness of solutions to the parabolic counterpart of \eqref{eq120} and \eqref{eq120b} have been investigated in \cite{PV1,PV2,MePu} and similar results have been obtained.

\subsection{Outline of our results}
The main results of this paper will be given in detail in the forthcoming Theorems \ref{teo3} and \ref{cor4}. We give here a sketchy outline of these results.

\medskip

We shall prove that the solution to the elliptic equation \eqref{elliptic} is unique in the class $L^p_{\psi}(\R^N)$ with $p\geq 1$ and
\begin{equation}\label{eq12}
\psi_\beta(x):=(1+|x|^2)^{-\frac{\beta}2}\;\,\; (x\in \R^N)\,,
\end{equation}
for properly chosen $\beta>0$, see Theorem \ref{teo3}.
In order to prove such a uniqueness result we construct a positive supersolution to equation
\begin{equation}\label{eq14}
\LL\zeta -\rho\,p\,c\,\zeta= 0\quad \textrm{in}\;\; \R^N\,.
\end{equation}
Indeed, the weight function $\psi$ defined in \eqref{eq12} is related to such a supersolution. In general, our uniqueness class includes {\it unbounded} solutions. Thus, in particular, we get uniqueness of bounded solutions. Moreover, we also show that our conditions under which we prove uniqueness are sharp in the sense that, when they fail, infinitely many solutions can be found, see Theorem \ref{cor4}.


\smallskip

\medskip

The paper is organized as follows. In Section \ref{sec1} we recall some preliminaries about fractional Laplacian and we give the notion of solutions we shall deal with. Then we state our main results (Theorems \ref{teo3}, \ref{cor4}). Section \ref{pe} is devoted to the proof of Theorem \ref{teo3}. In Section \ref{mix} we state some preliminaries concerning regularity of solutions to mixed local and nonlocal equations and we conclude by proving Theorem \ref{cor4}. Finally, in Section \ref{pp}, we also establish similar uniqueness results for the parabolic problem \eqref{problema} (see Theorem \ref{teo1} below). 

\section{Mathematical background and main results} \label{sec1}\setcounter{equation}{0}
In this first section we collect the relevant definitions and notation which will
be used throughout the rest of the paper, and we state our main result,
namely Theorems \ref{teo3} and \ref{cor4}.
\subsection{The mixed operator $\LL = \Delta-(-\Delta)^s$}
In order to clearly state the main results of this paper, we first need
to fix some notation and to pro\-perly define
what we mean by a \emph{solution of equation \eqref{elliptic}}; due to the
\emph{mixed nature of $\LL$}, this will require some preliminaries.
\medskip

\noindent\textbf{1) The Fractional Laplacian.} Let $s\in (0,1)$ be fixed, and let
$u:\R^N\to\R$. The \emph{fractional La\-pla\-cian} (of order $s$) of $u$
at a point $x\in\R^N$ is defined as follows
\begin{equation} \label{eq:defDeltas}
\begin{split}
 (-\Delta)^s u(x) & = C_{N,s}\cdot \mathrm{P.V.}\int_{\R^N}\frac{u(y)-u(x)}{|x-y|^{N+2s}}\,dy
\\
& = C_{N,s}\cdot\lim_{\varepsilon\to 0^+}\int_{\{|x-y|\geq\varepsilon\}}\frac{u(y)-u(x)}{|x-y|^{N+2s}}\,dy,
\end{split}
\end{equation}
provided that the limit exists and is finite.
Here, $C_{N,s} > 0$ is a suitable normalization constant which plays
a role in the limit as $s\to 0^+$ or $s\to 1^-$, and is explicitly given by
$$C_{N,s} = \frac{2^{2s-1}{2s}\Gamma((N+2s)/2)}{\pi^{N/2}\Gamma(1-s)}.$$
As is reasonable to expect, for $(-\Delta)^s u(x)$ to be well-defined one needs
to impose suitable \emph{growth conditions} on the functions $u$, both when $y\to\infty$ and
when $y\to x$. In this perspective we state the following
proposition, where we employ the notation
\begin{equation}\label{eq:spaceL1s}
 \mathcal{L}^s(\R^N) := 
 \Big\{f:\R^N\to\R:\,\|u\|_{1,s} := \int_{\R^N}\frac{|f(x)|}{1+|x|^{N+2s}}\,dx<\infty\Big\}.
\end{equation}
\begin{proposition} \label{prop:welldefDeltas}
Let $\Omega\subseteq\R^N$ be an open set. Then, the following facts hold.
\begin{itemize}
 \item[{i)}] If $0<s<1/2$ and $u\in C_{\mathrm{loc}}^{2s+\g}(\Omega)\cap \mathcal{L}^s(\R^N)$
 for some $\g \in (0,1-2s)$, then
 $$\exists\,\,(-\Delta)^s u(x) = C_{N,s}\,\int_{\R^N}\frac{u(y)-u(x)}{|x-y|^{N+2s}}\,dy\quad
 \text{for all $x\in\Omega$}. $$
 \item[{ii)}] If $1/2<s<1$ and $u\in C^{1,2s-1+\g}_{\mathrm{loc}}(\Omega)\cap \mathcal{L}^s(\R^N)$
 for some $\g\in (0,2-2s)$, then
 $$\exists\,\,(-\Delta)^s u(x) = -\frac{C_{N,s}}{2}\,\int_{\R^N}\frac{u(x+z)+u(x-z)-2u(x)}{|z|^{N+2s}}\,dy\quad
 \text{for all $x\in\Omega$}. $$
\end{itemize}
Moreover, in both cases \emph{i)-ii)} we have $(-\Delta)^su\in C(\Omega)$.
\end{proposition}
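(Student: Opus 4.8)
The statement to prove is Proposition 2.1 (labeled \texttt{prop:welldefDeltas}), which asserts that under local Hölder regularity plus the integrability condition $u\in\mathcal{L}^s(\R^N)$, the fractional Laplacian $(-\Delta)^s u(x)$ exists as an absolutely convergent integral (in case i)) or as a convergent symmetric second-difference integral (in case ii)), and depends continuously on $x$.

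\bigskip

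\textbf{Proof proposal.}

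The plan is to split the defining integral at a fixed radius, say over $B_1(x)$ and its complement, and to control each piece using one of the two hypotheses. For the tail piece, i.e.\ the integral over $\{|y-x|\ge 1\}$, one uses that $|x-y|^{N+2s}$ is comparable to $1+|y|^{N+2s}$ when $|y-x|\ge 1$ and $x$ ranges in a fixed compact set; hence the assumption $u\in\mathcal L^s(\R^N)$ gives absolute convergence of $\int_{\{|y-x|\ge1\}}\frac{|u(y)-u(x)|}{|x-y|^{N+2s}}\,dy$, with a bound locally uniform in $x$. For the near-diagonal piece one uses the local regularity: in case i), $u\in C^{2s+\g}_{\mathrm{loc}}$ with $2s+\g<1$ means $|u(y)-u(x)|\le C|x-y|^{2s+\g}$ for $y$ near $x$, so the integrand is bounded by $C|x-y|^{-N+\g}$, which is integrable near $x$; thus the principal value is in fact an ordinary (absolutely convergent) integral. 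In case ii), $2s>1$, so one cannot bound the first difference by an integrable power; instead one symmetrizes, writing the near-diagonal contribution as $-\tfrac12\int_{\{|z|\le 1\}}\frac{u(x+z)+u(x-z)-2u(x)}{|z|^{N+2s}}\,dz$, and uses $u\in C^{1,2s-1+\g}_{\mathrm{loc}}$: a first-order Taylor expansion with the odd terms cancelling gives $|u(x+z)+u(x-z)-2u(x)|\le C|z|^{1+(2s-1+\g)}=C|z|^{2s+\g}$, hence integrand $\le C|z|^{-N+\g}$, again integrable near the origin. One must also check that the tail in the symmetrized form agrees with the original tail, which follows from the change of variables $y=x\pm z$ together with absolute convergence of the tail (so the symmetrization is legitimate there).

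For existence of the limit defining the P.V.\ in \eqref{eq:defDeltas}, in case i) it is immediate since the integral converges absolutely; in case ii) one notes that for $0<\e<1$ the excised annulus $\{\e\le|x-y|\le1\}$ contributes, after symmetrization, $-\tfrac12\int_{\{\e\le|z|\le1\}}\frac{u(x+z)+u(x-z)-2u(x)}{|z|^{N+2s}}\,dz$, and the integrand is dominated by the integrable function $C|z|^{-N+\g}$ on $\{|z|\le1\}$, so dominated convergence yields the limit as $\e\to0^+$ and identifies it with the symmetric-difference formula in ii). Strictly, one should record that the odd part $\int_{\{\e\le|x-y|\le1\}}\frac{u(y)-u(x)}{|x-y|^{N+2s}}\,dy$ equals its symmetrization exactly because the domain of integration is symmetric about $x$ (so that the P.V.\ prescription is precisely what makes the divergent odd part vanish).

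Finally, for continuity of $x\mapsto(-\Delta)^su(x)$ on $\Omega$: fix a compact $K\Subset\Omega$ and a point $x_0\in K$; for $x$ in a small ball around $x_0$, write $(-\Delta)^su(x)$ as (near-diagonal symmetric-difference integral over $\{|z|\le1\}$) $+$ (tail integral over $\{|y-x|\ge1\}$). In the near-diagonal part the integrand $\frac{u(x+z)+u(x-z)-2u(x)}{|z|^{N+2s}}$ (resp.\ $\frac{u(y)-u(x)}{|x-y|^{N+2s}}$ in case i)) is continuous in $x$ for each fixed $z$ (by continuity of $u$) and is dominated by the fixed integrable majorant $C|z|^{-N+\g}$ with $C$ uniform over the small ball (since the Hölder seminorm is finite on a slightly larger compact set); dominated convergence gives continuity. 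In the tail part, $\frac{u(y)-u(x)}{|x-y|^{N+2s}}$ is continuous in $x$ for a.e.\ $y$ and is dominated by $C\big(\frac{|u(y)|}{1+|y|^{N+2s}}+\frac{\sup_K|u|}{1+|y|^{N+2s}}\big)\in L^1(\R^N)$, uniformly for $x$ near $x_0$; dominated convergence again gives continuity. Combining the two pieces yields $(-\Delta)^su\in C(\Omega)$.

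\bigskip

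\textbf{Main obstacle.} The only genuinely delicate point is the bookkeeping in case ii): one must be careful that the passage to the symmetrized integrand is valid \emph{both} near the diagonal (where it is essential, to exploit cancellation of first-order terms) and in the tail (where it is a harmless consequence of absolute convergence), and that the P.V.\ limit prescription interacts correctly with this symmetrization — i.e.\ that the formally divergent odd contribution genuinely cancels because the truncation region $\{\e\le|x-y|\le R\}$ is symmetric under $y\mapsto 2x-y$. Making the estimate $|u(x+z)+u(x-z)-2u(x)|\le C|z|^{2s+\g}$ rigorous requires only the mean value form of Taylor's theorem applied to the $C^{1,2s-1+\g}$ function $t\mapsto u(x+tz)$, but care is needed that the Hölder constant of $\nabla u$ is taken over a compact neighbourhood containing all the segments $[x-z,x+z]$ with $|z|\le1$, which is why one works on a compact $K\Subset\Omega$ slightly smaller than $\Omega$ and restricts $|z|\le 1$ (or, more precisely, $|z|\le \operatorname{dist}(K,\partial\Omega)$).
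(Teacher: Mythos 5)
The paper states Proposition \ref{prop:welldefDeltas} without proof, treating it as a standard preliminary fact on the pointwise fractional Laplacian, so there is no in-paper argument to compare against. Your proof is correct and is essentially the canonical one: split the integral at a fixed radius $r<\operatorname{dist}(K,\partial\Omega)$; control the tail on $\{|y-x|\ge r\}$ via $u\in\mathcal{L}^s(\R^N)$ together with the equivalence $|x-y|^{N+2s}\simeq 1+|y|^{N+2s}$ uniformly for $x$ in a compact $K$; control the near-diagonal piece via the local H\"older regularity --- directly in case i), and in case ii) via the symmetric second difference, whose first-order Taylor term cancels leaving a bound $C|z|^{2s+\gamma}$ from the $C^{1,2s-1+\gamma}$ seminorm; and then deduce continuity by dominated convergence with a majorant that is locally uniform in $x$. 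Your closing caveat --- that the segments $[x-z,x+z]$ must stay inside a compact subset of $\Omega$ where the H\"older seminorm is finite, which is why the cutting radius must be taken below $\operatorname{dist}(K,\partial\Omega)$ --- is exactly the detail most commonly glossed over, and you also handle correctly the fact that the P.V.\ truncation region $\{\e\le|y-x|\le r\}$ is symmetric about $x$, which is what legitimizes passing to the symmetrized integrand before letting $\e\to 0^+$.
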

In the particular case when $\Omega = \R^N$
and $u\in\mathcal{S}\subseteq\mathcal{L}^s(\R^N)$ 
(here and throughout, $\mathcal{S}$ denotes the usual Schwarz space of rapidly decreasing functions),
it is possible to provide an alternative expression of $(-\Delta)^s u$ (which is well-defined
on the whole of $\R^N$, see Proposition \ref{prop:welldefDeltas}) via the Fourier Transform
$\mathfrak{F}$; more precisely, we have the following result.
\begin{proposition} \label{prop:DeltasFourier}
 Let $u\in\mathcal{S}\subseteq \mathcal{L}^s(\R^N)$. Then,
 \begin{equation} \label{eq23}
  \exists\,\,(-\Delta)^s u(x) = 
  \mathfrak F^{-1} \big( |\xi|^{2s}\mathfrak F u  \big)(x)\quad \text{for every
  $x\in\R^N$}.
 \end{equation}
\end{proposition}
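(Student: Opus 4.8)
The plan is to exploit the classical fact that, for a Schwartz function, the fractional Laplacian admits an \emph{absolutely convergent} symmetric singular-integral representation, and then to recognise the symbol $|\xi|^{2s}$ as (a multiple of) the Fourier transform of the underlying kernel. First I would check that the right-hand side of \eqref{eq23} makes sense: since $\mathfrak F u\in\mathcal S$ and $|\xi|^{2s}$ is locally integrable on $\R^N$ (because $2s>0$) with at most polynomial growth, the product $|\xi|^{2s}\mathfrak F u$ lies in $L^1(\R^N)$, so $Tu:=\mathfrak F^{-1}(|\xi|^{2s}\mathfrak F u)$ is a well-defined bounded continuous function. On the other hand, by Proposition \ref{prop:welldefDeltas} (applicable since $u\in\mathcal S\subseteq\mathcal L^s(\R^N)$ is smooth), $(-\Delta)^su(x)$ exists for every $x\in\R^N$, is continuous, and — after a second-order Taylor expansion of $u$ to kill the singularity at $z=0$ — can be rewritten, for every $s\in(0,1)$, as
$$(-\Delta)^su(x)=-\frac{C_{N,s}}{2}\int_{\R^N}\frac{u(x+z)+u(x-z)-2u(x)}{|z|^{N+2s}}\,dz,\qquad x\in\R^N .$$

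Next I would introduce truncations. For $\varepsilon>0$ let $(-\Delta)^s_\varepsilon u(x)$ denote the integral above restricted to $\{|z|\ge\varepsilon\}$. Two elementary facts are needed. First, by dominated convergence, using the majorant arising from $|u(x+z)+u(x-z)-2u(x)|\le\min\{4\|u\|_\infty,\ \|D^2u\|_\infty|z|^2\}$, one gets $(-\Delta)^s_\varepsilon u(x)\to(-\Delta)^su(x)$ as $\varepsilon\to 0^+$, for each fixed $x$. Second, writing $(-\Delta)^s_\varepsilon u$ as a linear combination of $u$ and of the convolution of $u$ with the kernel $|z|^{-N-2s}\mathbf 1_{\{|z|\ge\varepsilon\}}\in L^1(\R^N)$, one sees that $(-\Delta)^s_\varepsilon u\in L^1(\R^N)$ and, by the convolution theorem (with the normalisation of $\mathfrak F$ under which $\widehat{-\Delta u}=|\xi|^2\mathfrak F u$),
$$\mathfrak F\big[(-\Delta)^s_\varepsilon u\big](\xi)=m_\varepsilon(\xi)\,\mathfrak F u(\xi),\qquad m_\varepsilon(\xi):=C_{N,s}\int_{\{|z|\ge\varepsilon\}}\frac{1-\cos(z\cdot\xi)}{|z|^{N+2s}}\,dz .$$

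The heart of the matter is the identity $\lim_{\varepsilon\to 0}m_\varepsilon(\xi)=|\xi|^{2s}$. The integral $m(\xi):=C_{N,s}\int_{\R^N}(1-\cos(z\cdot\xi))\,|z|^{-N-2s}\,dz$ converges absolutely ($1-\cos(z\cdot\xi)=O(|z|^2)$ near the origin and is bounded at infinity, against $|z|^{-N-2s}$), so $m_\varepsilon(\xi)\to m(\xi)$ and moreover $0\le m_\varepsilon(\xi)\le m(\xi)=|\xi|^{2s}$ for $\xi\neq 0$; the last equality follows from the rotational invariance of the integrand together with the substitution $z\mapsto z/|\xi|$, which reduces $m(\xi)$ to $|\xi|^{2s}\cdot C_{N,s}\int_{\R^N}(1-\cos w_1)\,|w|^{-N-2s}\,dw$, and $C_{N,s}$ is, by its very definition, the reciprocal of this last finite dimensional constant. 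Finally, since $|m_\varepsilon\,\mathfrak F u|\le|\xi|^{2s}|\mathfrak F u|\in L^1(\R^N)$ and $m_\varepsilon\,\mathfrak F u\to|\xi|^{2s}\,\mathfrak F u$ pointwise, dominated convergence gives convergence in $L^1(\R^N)$; applying $\mathfrak F^{-1}$, which maps $L^1(\R^N)$ continuously into $C(\R^N)$, yields $(-\Delta)^s_\varepsilon u\to Tu$ uniformly on $\R^N$. Comparing with the pointwise limit from the previous step, $(-\Delta)^su=Tu$ everywhere, which is \eqref{eq23}.

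I expect the main obstacle to be precisely the rigorous interchange between the singular integral defining $(-\Delta)^s$ and the Fourier transform; the truncation device above is designed to circumvent it, reducing everything to honest $L^1$ convolutions plus a single dominated-convergence passage to the limit. The remaining ingredients — the Taylor estimate near the origin, and the scaling-and-rotation computation of the normalising constant — are routine.
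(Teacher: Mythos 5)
Your argument is correct, and it is essentially the standard proof of this classical identity; note that the paper itself states Proposition \ref{prop:DeltasFourier} \emph{without proof}, as a recalled preliminary (it is, e.g., Proposition 3.3 of the ``Hitchhiker's guide'' \cite{DPVal}; see also \cite{Kw}), so there is no in-paper argument to compare against. Your scheme --- absolute convergence of the second-difference representation for Schwartz functions, the convolution-theorem computation of the truncated multiplier $m_\varepsilon(\xi)=C_{N,s}\int_{\{|z|\ge\varepsilon\}}(1-\cos(z\cdot\xi))\,|z|^{-N-2s}\,dz$, the dominated passage $m_\varepsilon\mathfrak F u\to|\xi|^{2s}\mathfrak F u$ in $L^1(\R^N)$, and the comparison of the resulting uniform limit $\mathfrak F^{-1}(|\xi|^{2s}\mathfrak F u)$ with the pointwise limit $(-\Delta)^s u$ --- is exactly the route taken in those references, and the interchanges you invoke (Fubini via honest $L^1$ convolutions, Fourier inversion for the continuous $L^1$ functions $(-\Delta)^s_\varepsilon u$, dominated convergence with majorant $\min\{4\|u\|_\infty,\|D^2u\|_\infty|z|^2\}\,|z|^{-N-2s}$) are all justified as you say. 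Two small caveats. First, the normalisation step is not literally ``by definition'' in this paper: $C_{N,s}$ is defined here by the explicit Gamma-function formula, so you still owe the classical computation that $\int_{\R^N}(1-\cos w_1)\,|w|^{-N-2s}\,dw=\pi^{N/2}\Gamma(1-s)\big/\bigl(2^{2s-1}\,2s\,\Gamma((N+2s)/2)\bigr)$, i.e. that the paper's constant is the reciprocal of that integral; this identity is precisely what makes the symbol come out as $|\xi|^{2s}$ with no spurious factor, and it is carried out, e.g., in \cite{Kw}. Second, the displayed definition \eqref{eq:defDeltas} in the paper carries a sign slip (numerator $u(y)-u(x)$), whereas Proposition \ref{prop:welldefDeltas}\,(ii) and the rest of the paper use the standard convention; your symmetrised formula, and hence the nonnegative symbol $|\xi|^{2s}$, are consistent with the latter, which is clearly the intended one.
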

\begin{remark} \label{rem:hintweakLap}
 Taking into account that the Fourier Transform $\mathfrak{F}$ 
 \emph{can be extended to
  an isometry of $L^2(\R^N)$}, identity \eqref{eq23} in the above proposition
  may be used to define the fractional Laplacian for \emph{non-regular functions}; more precisely,
  one can define
  $$(-\Delta)^s u = \mathfrak F^{-1} \big( |\xi|^{2s}\mathfrak F u  \big)\in L^2(\R^N)$$
  for every $u\in L^2(\R^N)$ such that $ |\xi|^{2s}\mathfrak F u \in L^2(\R^N)$.
  We will return on the proper definition of a \emph{weak} fractional Laplacian
  in Appendix \ref{sec:appendix}.
\end{remark}
\medskip

\noindent \textbf{2)\,\,Classical solutions to $\LL u = f$.} Now we have reviewed some basic
facts concerning the fr\-ac\-tional Laplacian, we can make precise the notion
of \emph{classical solution} to equation \eqref{elliptic}.
\begin{definition}\label{defsole}
 Let assumptions $(H_0)-(H_1)$ be in force. We say that a function $u:\R^N\to\R$ is a 
 \emph{classical subsolution [\emph{resp.}\,{supersolution}]} to equation \eqref{elliptic} if
 \begin{itemize}
  \item[	{a)}] $u\in C^2(\R^N)\cap \mathcal{L}^s(\R^N)$;
  \item[{b)}] for every $x\in\R^N$, we have
  $$\mathcal{L} u(x) -\rho(x)c(x)u(x)\leq\,[\text{resp.}\,\geq]\,\,0.$$
 \end{itemize}
 We say that $u$ is a \emph{classical solution} to \eqref{elliptic} if $u$ is both
 a classical subsolution and a classical supersolution to the same equation.
\end{definition}
We explicitly observe that the well-posedness of Definition \ref{defsole} is
a direct consequence of Proposition \ref{prop:welldefDeltas}: in fact,
if $u\in C^2(\R^N)\cap \mathcal{L}^s(\R^N)$ (for some fixed $s\in (0,1)$), we have
$$\exists\,\,(-\Delta)^s u(x) = C_{N,s}\cdot \mathrm{P.V.}\int_{\R^N}\frac{u(y)-u(x)}{|x-y|^{N+2s}}\,dy\quad
\text{for every $x\in\R^N$},$$
and the function $\R^N\ni x\mapsto (-\Delta)^s u(x)$ is \emph{continuous in $\R^N$}.
\medskip

\noindent\textbf{3) Weighted $L^p$-spaces.} Since the main aim of this paper
is to obtain some \emph{uniqueness re\-sults} for the clasical solutions of \eqref{elliptic} belonging to 
suitable \emph{weighted
$L^p$-spaces}, we conclude this part of the section with the following definition/notation.
\begin{definition} \label{def:wLpspaces}
 Let $1\leq p\leq \infty$, and let $f\in L^1(\R^N),\,f\geq 0$. We define
 \begin{align*}
 & \bullet\,\,L^p_f(\R^N) = \Big\{u:\R^N\to\R:\,\int_{\R^N}|u|^pf\,dx < \infty\Big\}
 \quad\text{if $1\leq p <\infty$};
 \\[0.1cm]
 & \bullet\,\,L^\infty_f(\R^N) = \big\{
 u:\R^N\to\R:\,\mathrm{ess\,sup}_{\R^N}(|u|f) < \infty\}.
 \end{align*} 
 These spaces $L^p_f(\R^N)$ are usually referred to as \emph{weighted $L^p$-spaces
 (with weight $f$)}.
\end{definition}
\subsection{Main results}

\begin{theorem}\label{teo3}
Let assumptions \eqref{h3}-\eqref{h1} be satisfied. Let $u
\in C^2(\R^N)\cap\mathcal{L}^s(\R^N)$ 
be a solution to equation \eqref{elliptic} with $|u|^p\in \mathcal L^s(\R^N),$ for some $p\geq 1$. Suppose that, for some $c_0>0,$
\begin{equation}\label{eq27}
c(x)\geq c_0\quad \textrm{for all}\;\;x\in \R^N\,.
\end{equation}
If $pc_0$ is large enough and if $u\in L^p_\psi(\R^N)$ 
\emph{(}where $\psi = \psi_\beta$ is as in \eqref{eq12}\emph{)}, then
$$
u\equiv 0 \quad \textrm{in} \;\; \R^N\,.
$$
provided that one of the following condition is fulfilled:
\begin{itemize}
\item[(i)] $0<\beta\leq N-2s$, $\alpha\leq 2$;
\item[(ii)] $N-2s<\beta<N$, $\alpha\leq 2s$;
\item[(iii)] $\beta=N$, $\alpha<2s$;
\item[(iv)] $\beta>N$, $\alpha+\beta\leq 2s +N$.
\end{itemize}
\end{theorem}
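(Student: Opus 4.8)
The plan is to run a comparison/maximum-principle argument tailored to the mixed operator $\LL = \Delta - (-\Delta)^s$, using a carefully chosen positive supersolution whose decay rate matches the weight $\psi_\beta$. First I would reduce the problem: since the equation is linear and $u$ is a solution, it suffices to show $u \leq 0$ (then apply the same argument to $-u$). The heart of the matter is to construct, for each of the four regimes (i)--(iv), a function $\zeta = \zeta_\beta > 0$ on $\R^N$ with $\zeta \in C^2(\R^N) \cap \mathcal{L}^s(\R^N)$, behaving like $(1+|x|^2)^{\beta/2}$ at infinity (so that $1/\zeta \asymp \psi_\beta$), and satisfying
\begin{equation*}
\LL \zeta - \rho\, p\, c\, \zeta \leq 0 \qquad \text{in } \R^N,
\end{equation*}
i.e. $\zeta$ is a positive classical supersolution of \eqref{eq14}. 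The natural candidate is $\zeta(x) = (1+|x|^2)^{\beta/2}$ itself, or a smoothed/truncated variant; one computes $\Delta \zeta$ explicitly and estimates $(-\Delta)^s \zeta$ by the standard asymptotics for powers of $(1+|x|^2)$ under the fractional Laplacian. The case distinctions on $\beta$ relative to $N-2s$ and $N$ arise precisely because the sign and growth of $(-\Delta)^s\big((1+|x|^2)^{\beta/2}\big)$ changes across these thresholds (it grows like $|x|^{\beta-2s}$ when $\beta > 0$ is not too large, but the behavior degenerates near $\beta = N$ and $\beta = N - 2s$), and the conditions relating $\alpha$ to $2s$ or $2$ ensure that the zeroth-order term $\rho\, p\, c\, \zeta \gtrsim (1+|x|^2)^{(\beta - \alpha)/2}$, using \eqref{h3}(ii) and \eqref{eq27}, dominates both $\Delta\zeta \sim (1+|x|^2)^{(\beta-2)/2}$ and $(-\Delta)^s\zeta \sim (1+|x|^2)^{(\beta-2s)/2}$ up to a constant — which is where "$pc_0$ large enough" is consumed.

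Once the supersolution $\zeta$ is in hand, I would set $w = u - \e \zeta$ for $\e > 0$ and argue by contradiction that $\sup_{\R^N} w \leq 0$. Because $u \in L^p_\psi(\R^N)$ and $\zeta \asymp 1/\psi_\beta$ blows up, $w(x) \to -\infty$ in an averaged sense, so $w$ cannot attain a positive supremum "at infinity"; the delicate point is that membership in a weighted $L^p$ space does not by itself give a pointwise decay of $u$, so I would instead work on large balls $B_R$, use that $w$ is a subsolution of $\LL w - \rho p c\, w \leq 0$ there (by convexity of $t \mapsto t^p$ one reduces the nonlinearity-free linear equation for $u$ to the $pc$-shifted equation — this is the standard Kato-type trick: $|u|$ or rather the linear test against the supersolution), and invoke the maximum principle for $\LL$ on bounded domains together with a contradiction argument à la \cite{PV1,PV2,MePu} to push the positive part of $w$ to zero. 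Letting $\e \to 0^+$ yields $u \leq 0$.

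The main obstacle I anticipate is the fractional Laplacian estimate for $\zeta_\beta$ in the borderline regimes, especially case (iii) $\beta = N$ (where a logarithmic correction may appear in $(-\Delta)^s\zeta$) and case (iv) $\beta > N$ (where $\zeta$ grows faster than $|x|^N$ and the tail integral $\int \frac{\zeta(y)}{|x-y|^{N+2s}}\,dy$ must still be controlled — this is exactly why the constraint $\alpha + \beta \leq 2s + N$ appears, tying the lower bound on $\rho$ to the growth of $\zeta$). Handling the nonlocal term requires splitting the defining integral into a near region $\{|y - x| \leq |x|/2\}$, where a second-order Taylor expansion of $\zeta$ controls the principal value, and a far region, where one estimates $\zeta(y)/|x-y|^{N+2s}$ directly; getting the constants uniform in $x$ and matching them against $\rho\, p\, c\, \zeta$ is where the bulk of the technical work lies. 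A secondary subtlety is ensuring $\zeta \in \mathcal{L}^s(\R^N)$ when $\beta \geq N$, which forces $\beta < N + 2s$ implicitly (consistent with (iv)), and verifying that all the manipulations are licensed by Proposition \ref{prop:welldefDeltas}.
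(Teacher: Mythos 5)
Your load-bearing step --- setting $w=u-\varepsilon\zeta$ for a growing supersolution $\zeta\asymp(1+|x|^2)^{\beta/2}$ and forcing $\sup w\le 0$ by a maximum principle on balls $B_R$ --- fails for two structural reasons. First, for $\beta\ge 2s$ (which covers cases (ii)--(iv) and, generically, much of (i)) your $\zeta$ is \emph{not} in $\mathcal{L}^s(\R^N)$: the condition $\int|\zeta(x)|(1+|x|^{N+2s})^{-1}\,dx<\infty$ forces $\beta<2s$, not the $\beta<N+2s$ you write, so the tail integral $\int_{|y|\ge R}\zeta(y)|x-y|^{-N-2s}\,dy$ diverges, $(-\Delta)^s\zeta$ has no pointwise meaning, and Proposition \ref{prop:welldefDeltas} does not apply; truncating $\zeta$ to restore integrability destroys precisely the growth you need to outrun $u$. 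Second, $u\in L^p_{\psi_\beta}(\R^N)$ is only an averaged bound and gives no pointwise control of $u$, yet the maximum principle for $\LL$ on $B_R$ (Theorem \ref{thm:WMPclassical}) requires $w\le 0$ on \emph{all} of $\R^N\setminus B_R$, not just on $\partial B_R$, because $(-\Delta)^s$ sees the exterior; you acknowledge the lack of pointwise decay but the proposed workaround on large balls does not circumvent it. Moreover $u$ solves the unshifted equation $\LL u-\rho c\,u=0$ while $\zeta$ satisfies the $p$-shifted inequality $\LL\zeta-p\rho c\,\zeta\le 0$, so when $p>1$ the difference $w$ is not a sub- or supersolution of either equation and the comparison principle has nothing to grip. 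The Kato trick you invoke does yield a subsolution of the $p$-shifted equation, but that subsolution is $|u|^p$ (or its regularization $(u^2+\alpha)^{p/2}$), not $u-\varepsilon\zeta$, and converting an inequality for $|u|^p$ into a pointwise comparison for $u$ still needs pointwise input you do not have.

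The paper avoids both problems by inverting the role of $\zeta$: it takes $\zeta=\psi_\beta$, the \emph{decaying} weight (the reciprocal of your candidate, and trivially in $\mathcal{L}^s$), and uses it as a \emph{test function} in a duality argument rather than as a barrier. Proposition \ref{prop3} asks only for $\LL\zeta-p\rho c\,\zeta<0$ together with the decay bound $\zeta+|\nabla\zeta|\le C\psi$, which with $\zeta=\psi$ is automatic. The Kato inequality (Lemma \ref{lemma2}) gives $\LL G_\alpha(u)-p\rho c\,G_\alpha(u)\ge 0$; this is tested against $v=\zeta\gamma_R$ with a radial cutoff $\gamma_R$, and the cutoff errors --- $\int|u|^p\zeta|\Delta\gamma_R|$, $\int|u|^p|\langle\nabla\zeta,\nabla\gamma_R\rangle|$, $\int|u|^p\zeta|(-\Delta)^s\gamma_R|$ and $\int|u|^p|\mathcal B(\zeta,\gamma_R)|$ --- vanish as $R\to\infty$ precisely because $\zeta\lesssim\psi$ and $u\in L^p_\psi$ (Lemma \ref{lemma4}). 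Sending $\alpha\to 0$ and $R\to\infty$ yields $\int|u|^p[\LL\zeta-p\rho c\,\zeta]\,dx\ge 0$, forcing $u\equiv 0$. Your intuition about where the case split over $\beta$ originates (asymptotics of $(-\Delta)^s$ on powers, the logarithmic borderline at $\beta=N$) and about where $pc_0$ large is consumed is correct in spirit, but it must be carried out on the decaying $\psi_\beta$ via hypergeometric asymptotics (as in \eqref{eq328}--\eqref{eq329}) and inserted into the integration-by-parts scheme, not a pointwise comparison.
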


\begin{remark}
{\rm The hypothesis $p c_0 $ large enough made in Theorem \ref{teo3} will be specified in the proof of Theorem \ref{teo3}.}
\end{remark}

\begin{remark}\label{remun}
\rm{Let us assume that
$$
|u(x)|\le C(1+|x|^2)^{\frac{\vartheta}{2}} \quad \textrm{for all}\;\; x\in \R^N,
$$
for some $\vartheta\in\R$, $C>0$. Then  $u\in L^p_{\psi_\beta}(\R^N)$, with $p\in[1,+\infty)$, if $\vartheta<\frac{\beta-N}{p}$. Thus, in particular, a bounded solution $u$ belongs to our class of uniqueness $L^p_{\psi_\beta}(\R^N)$, with $p\in[1,+\infty)$, only if condition $\rm{(iv)}$ of Theorem \ref{teo3} is fulfilled.}

\end{remark}

In particular, Theorem \ref{teo3} with $\alpha=0$ in \eqref{h3}, yields the following

\begin{corollary}\label{corpar}
Let assumption \eqref{h3} be satisfied with $\alpha=0$, and let $u$ be a solution to problem \eqref{elliptic} with $|u|^p\in \mathcal L^s(\R^N)$, for some $p\geq 1$. Assume that \eqref{eq27} is satisfied. If $p c_0$ is large enough and if $u\in L^p_{\psi}(\R^N)$, then
$u\equiv 0$ in $\R^N$ proved that
$$
0< \beta\le N+2s.
$$
\end{corollary}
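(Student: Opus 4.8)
The plan is to obtain Corollary \ref{corpar} directly as the specialization of Theorem \ref{teo3} to the case $\alpha = 0$ in \eqref{h3}, by checking that the four regimes (i)--(iv) of Theorem \ref{teo3} together cover the whole range $0 < \beta \le N + 2s$ once one sets $\alpha = 0$. There is no new analytic content: all the work (construction of the positive supersolution $\zeta$ to \eqref{eq14} with the weight $\psi_\beta$, and the ensuing comparison/uniqueness argument) has already been carried out in the proof of Theorem \ref{teo3}.

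First I would record that, when $\alpha = 0$, assumption \eqref{h3} reduces to $\rho \in C(\R^N)$ together with $\rho(x) \ge C_0 > 0$ on $\R^N$, and that consequently every constraint on $\alpha$ occurring in cases (i)--(iv) of Theorem \ref{teo3} is either vacuous or turns into a condition on $\beta$ alone: indeed $\alpha = 0 \le 2$ in (i), $\alpha = 0 \le 2s$ in (ii), $\alpha = 0 < 2s$ in (iii) (which holds precisely because $s > 0$), and $\alpha + \beta = \beta \le N + 2s$ in (iv). The standing hypotheses of Theorem \ref{teo3} other than these --- that $u \in C^2(\R^N) \cap \mathcal{L}^s(\R^N)$ is a solution to \eqref{elliptic}, that $|u|^p \in \mathcal{L}^s(\R^N)$ for some $p \ge 1$, that $c(x) \ge c_0$ with $p c_0$ large enough, and that $u \in L^p_{\psi}(\R^N)$ with $\psi = \psi_\beta$ --- are exactly those assumed in the corollary.

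Then I would split the interval $(0, N+2s]$ according to the four cases: for $\beta \in (0, N-2s]$ invoke case (i); for $\beta \in (N-2s, N)$ invoke case (ii); for $\beta = N$ invoke case (iii); and for $\beta \in (N, N+2s]$ invoke case (iv). In each subcase the corresponding hypothesis of Theorem \ref{teo3} is met by the preceding remark on $\alpha = 0$, so Theorem \ref{teo3} applies and yields $u \equiv 0$ in $\R^N$. Since these four subintervals exhaust $(0, N+2s]$, this completes the proof. The only (entirely minor) point to keep an eye on is the handling of the endpoints $\beta = N-2s$ and $\beta = N$: the former sits in the closed case (i), and the latter is admissible in case (iii) exactly because $0 = \alpha < 2s$ whenever $s > 0$; beyond this bookkeeping there is no obstacle, the corollary being a direct consequence of the theorem.
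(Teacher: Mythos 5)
Your proposal is correct and coincides with the paper's own (implicit) argument: Corollary \ref{corpar} is obtained exactly by specializing Theorem \ref{teo3} to $\alpha=0$ and observing that cases (i)--(iv) then cover the whole range $0<\beta\le N+2s$. Your case-by-case bookkeeping, including the endpoints $\beta=N-2s$ and $\beta=N$, is accurate, so nothing further is needed.
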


From Theorem \ref{teo3} we also deduce the following

\begin{corollary}\label{cor3}
Let assumptions $(H_0)-(H_1)$ be satisfied, and let $u\in C^2(\R^N)\cap \mathcal{L}^s(\R^N)$ 
be a so\-lu\-tion to equation \eqref{elliptic}.
We assume that $\alpha < 2s$, and that 
\begin{equation} \label{eq:growthuCorollary}
 |u(x)| \leq C(1+|x|^2)^{\frac{\vartheta}{2}}\quad \textrm{for all}\;\; x\in \R^N,
 \end{equation}
for some $C>0$ and $0\leq \vartheta < 2s-\alpha$.
If $pc_0$ is large enough, with $c_0$ is as in \eqref{eq27}, then
$$u\equiv 0\quad \textrm{in}\;\; \R^N\,.$$
\end{corollary}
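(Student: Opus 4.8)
The plan is to derive Corollary \ref{cor3} directly from Theorem \ref{teo3}, by exhibiting --- under the standing growth assumption \eqref{eq:growthuCorollary} on $u$ --- an exponent $\beta>0$ and an integrability parameter $p\geq 1$ for which \emph{all} the hypotheses of that theorem are simultaneously in force: namely $|u|^p\in\mathcal L^s(\R^N)$, the membership $u\in L^p_{\psi_\beta}(\R^N)$, one of the alternatives (i)--(iv), and (via \eqref{eq27}) the largeness of $pc_0$. Once such a $\beta$ and $p$ are fixed, Theorem \ref{teo3} gives $u\equiv 0$ with no further work, so the whole proof is a matter of choosing $\beta$ and $p$ compatibly.

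First I would record the two ``decay budgets'' that the bound $|u(x)|\le C(1+|x|^2)^{\vartheta/2}$ imposes. On the one hand, by Remark \ref{remun}, this bound guarantees $u\in L^p_{\psi_\beta}(\R^N)$ as soon as $\vartheta<(\beta-N)/p$, i.e.\ $\beta>N+p\vartheta$. On the other hand, since $\mathcal L^s(\R^N)$ carries the weight $1/(1+|x|^{N+2s})$, a direct computation shows that $|u|^p\in\mathcal L^s(\R^N)$ whenever the integrand $\sim |x|^{p\vartheta-N-2s}$ is integrable at infinity, that is, whenever $p\vartheta<2s$; because $\alpha\geq 0$ and $\vartheta<2s-\alpha$, this holds for $p=1$ (and for any $p\geq 1$ close enough to $1$). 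It thus remains to slot $\beta$ into the window $\beta>N+p\vartheta$ while also matching one of (i)--(iv).

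The right alternative is (iv), which requires $\beta>N$ and $\alpha+\beta\leq 2s+N$, i.e.\ $N<\beta\leq N+2s-\alpha$; the hypothesis $\alpha<2s$ makes this interval nonempty. Choosing $p=1$ (or any $p\geq 1$ with $p\vartheta<2s-\alpha$, which is possible precisely because $\vartheta<2s-\alpha$), the right endpoint $N+2s-\alpha$ strictly exceeds $N+p\vartheta$, so one may fix
$$
\beta\in\big(N+p\vartheta,\;N+2s-\alpha\big].
$$
With this choice: Remark \ref{remun} yields $u\in L^p_{\psi_\beta}(\R^N)$; the computation above yields $|u|^p\in\mathcal L^s(\R^N)$; and condition (iv) of Theorem \ref{teo3} holds. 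Since $\beta$ (hence the threshold on $pc_0$ produced by Theorem \ref{teo3}) is now fixed, the assumption that $pc_0$ be large enough is inherited verbatim, and Theorem \ref{teo3} gives $u\equiv 0$ in $\R^N$. The only genuine point to verify is that the three constraints on $(\beta,p)$ are mutually compatible, and this compatibility is governed entirely by the positive gap $2s-\alpha-\vartheta>0$; I therefore expect no real obstacle beyond this bookkeeping, the strict inequality $\vartheta<2s-\alpha$ being exactly what makes the argument close.
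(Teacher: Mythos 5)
Your proposal is correct and takes essentially the same route as the paper: fix $p=1$, choose $\beta\in(N+\vartheta,\,N+2s-\alpha]$, and invoke Theorem \ref{teo3}-(iv), the nonemptiness of that interval being exactly the condition $\vartheta<2s-\alpha$. You also carefully verify the hypothesis $|u|^p\in\mathcal L^s(\R^N)$ (needing only $p\vartheta<2s$), a point the paper's one-line proof leaves implicit.
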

\begin{proof}
 It suffices to apply Theorem \ref{teo3}-(iv), since
 the growth assumption
 \eqref{eq:growthuCorollary} (jointly with the
 fact that $\vartheta < 2s-\alpha$) ensures that $u\in L^1_\psi(\R^N)$ for any 
 $\vartheta+N<\beta\leq (2s-\alpha)+N$.
\end{proof}
Moreover, we show that hypothesis $\alpha < 2s$ in Corollary \ref{cor3} is 
\emph{almost optimal}. More precisely, 
 the next result shows that problem \eqref{problema} admits \emph{infinitely many} bounded solutions
 if $\alpha > 2s$.
 
\medskip

\begin{theorem}\label{cor4} Let $\rho, c\in C^\gamma(\mathbb R^N)$, for some $\gamma\in (0,1)$, and suppose that \eqref{h1} holds. 
Moreover, let us suppose that the function $\rho$ satisfies, for some $c_0,r_0 > 0$, the bound
$$0< \rho(x)\leq \frac{c_0}{(1+|x|^2)^{\alpha/2}}\quad\text{for every $x\in\R^N$ with $|x|>r_0$},$$
and suppose that $\alpha > 2s$. Then equation \eqref{elliptic} admits infinitely many bounded solutions. 
\end{theorem}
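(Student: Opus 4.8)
The plan is to construct infinitely many bounded solutions by a sub- and supersolution argument, using the smallness of $\rho$ at infinity encoded in $\alpha>2s$ to build a bounded, positive, strict supersolution of $\LL\zeta-\rho c\zeta=0$ that is not identically constant. First I would look for a radial barrier of the form $\zeta(x)=2-\eta(x)$, where $\eta$ is a smooth radial function with $0\le\eta\le 1$, $\eta\equiv 1$ near the origin, and $\eta(x)\to 0$ as $|x|\to\infty$ at a prescribed polynomial rate, say $\eta(x)\sim(1+|x|^2)^{-\theta/2}$ for a suitable $\theta>0$. The key computation is to estimate $\LL\zeta=\Delta\zeta-(-\Delta)^s\zeta=-\Delta\eta+(-\Delta)^s\eta$ for such $\eta$; on the region $|x|>r_0$ one expects $|\Delta\eta(x)|\le C(1+|x|^2)^{-\theta/2-1}$ and $|(-\Delta)^s\eta(x)|\le C(1+|x|^2)^{-\theta/2-s}$, so the dominant term is the fractional one and $|\LL\zeta(x)|\le C(1+|x|^2)^{-\theta/2-s}$. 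Since $\zeta\ge 1$ and $c$ is bounded below on compact sets but in general only nonnegative, one should keep the term $\rho c\zeta\ge 0$ on the good side; thus it suffices to have $\LL\zeta\le 0$ at infinity, i.e. $-\Delta\eta+(-\Delta)^s\eta\le 0$, which holds because $(-\Delta)^s\eta<0$ at points where $\eta$ is a positive decreasing bump (away from its maximum) and dominates the local term there. Near the origin, where $\eta\equiv 1$ and $\zeta\equiv 1$, both $\Delta\zeta$ and $(-\Delta)^s\zeta$ are controlled and one can absorb any sign by enlarging $r_0$ or rescaling; more carefully, I would multiply $\eta$ by a small constant so that $\LL\zeta-\rho c\zeta\le 0$ holds on all of $\R^N$, giving a bounded supersolution $\zeta$ with $\inf\zeta>1$ and $\sup\zeta\le 2$.

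Next, I would fix the trivial subsolution $\underline u\equiv 0$ (which solves the equation exactly since $\LL 0=0$ and $\rho c\cdot 0=0$), note $\underline u\le\zeta$, and invoke a monotone iteration / Perron-type scheme for the mixed operator $\LL$ to produce, for each $\lambda\in(0,1)$, a classical solution $u_\lambda$ of \eqref{elliptic} with $0\le u_\lambda\le\lambda\zeta$; the regularity assumption $\rho,c\in C^\gamma(\R^N)$ together with the interior Schauder-type estimates for mixed local-nonlocal equations recalled in Section \ref{mix} upgrades the limit of the iteration to a genuine $C^2(\R^N)\cap\mathcal L^s(\R^N)$ classical solution. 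To guarantee that these solutions are genuinely distinct and nonzero, I would instead solve an exterior or large-ball Dirichlet-type problem: for each $R>r_0$ and each boundary datum $g_R\equiv\lambda$ (constant) on $\R^N\setminus B_R$, the comparison principle for $\LL-\rho c$ (which holds because $\rho c\ge 0$) sandwiches the solution between $0$ and $\lambda\zeta$; letting $R\to\infty$ and using a diagonal/compactness argument with the Schauder estimates yields a bounded solution $u_\lambda$ with $0<u_\lambda\le\lambda\zeta\le 2\lambda$. Distinct values of $\lambda$ force $\sup u_\lambda$ to differ, so these are infinitely many distinct bounded solutions; alternatively, distinctness follows at once because $u_\lambda\to 0$ locally uniformly as $\lambda\to 0$ while $u_\lambda\not\equiv 0$.

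The main obstacle I anticipate is the barrier construction itself — specifically, verifying the sign of $(-\Delta)^s\eta$ and the precise decay rate $(1+|x|^2)^{-\theta/2-s}$ uniformly in $x$, since the fractional Laplacian of a bump is a nonlocal quantity and its behavior at infinity requires splitting the integral in \eqref{eq:defDeltas} into a near region $|x-y|\le|x|/2$, a far region $|y|\le|x|/2$, and an intermediate region, and carefully tracking constants; the condition $\alpha>2s$ enters precisely to ensure that $\rho(x)\le c_0(1+|x|^2)^{-\alpha/2}$ decays faster than the natural scale of the operator acting on $\zeta$, so that the perturbation $\rho c\zeta$ cannot destroy the supersolution property. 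A secondary, more technical point is justifying the passage to the limit $R\to\infty$ at the level of classical solutions — here I would rely on the interior regularity theory for mixed operators cited in Section \ref{mix}, which provides local $C^{2,\gamma}$ (or at least $C^2$) bounds independent of $R$ on the solutions $u_\lambda$ restricted to any fixed compact set, together with the uniform bound $0\le u_\lambda\le 2\lambda$ controlling the tails in $\mathcal L^s(\R^N)$ via dominated convergence.
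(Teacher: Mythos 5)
The proposal breaks down in two places, one technical and one conceptual.

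\smallskip

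\textbf{The sign of $(-\Delta)^s\eta$ is not what you claim.} You assert that $(-\Delta)^s\eta<0$ away from the maximum of a positive decreasing bump, but this is false in general and depends delicately on the decay rate $\theta$. For a radial decaying profile $\eta(x)\sim|x|^{-\theta}$ at infinity, the known identity $(-\Delta)^s|x|^{-\theta}=\vartheta(\theta)\,|x|^{-\theta-2s}$ has $\vartheta(\theta)>0$ precisely when $0<\theta<N-2s$, and $\vartheta(\theta)<0$ only when $N-2s<\theta<N$ (this is the same fact the paper quotes from \cite{Kw} and uses in Lemma~\ref{lemma5} with $\vartheta>0$). Truncating $|x|^{-\theta}$ near the origin to make it bounded only \emph{raises} $(-\Delta)^s$ at points far out. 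So for the ``natural'' range $0<\theta<N-2s$ (which is forced if you want the fractional term to dominate in the way the paper does), you actually get $(-\Delta)^s\eta>0$, and then $\LL\zeta=-\Delta\eta+(-\Delta)^s\eta$ is \emph{positive} at infinity: $\zeta=2-\eta$ is \emph{not} a supersolution. Flipping to $\theta>N-2s$ fixes the sign of $(-\Delta)^s\eta$, but then (as I explain below) the lower barrier you would need fails. The correct object is $\zeta=\text{const}+V$ with $V\sim|x|^{-\beta}$, $0<\beta<\min\{N-2,\alpha-2s\}$, for which both $\Delta V<0$ and $(-\Delta)^sV>0$ at infinity, hence $\LL V<0$; this is exactly what Lemma~\ref{lemma5} builds.

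\smallskip

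\textbf{Nothing in the proposal rules out $u_\lambda\equiv 0$.} The equation is linear and homogeneous, so $u\equiv 0$ is a solution. Your sandwich $0\le u_\lambda\le \lambda\zeta\le 2\lambda$ is compatible with the trivial solution, and for the Dirichlet approximants $u_R$ on $B_R$ with exterior datum $\lambda$, letting $R\to\infty$ can very well produce the zero function; moreover, by linearity $u_\lambda=\lambda u_1$, so either all your limits are nonzero or none of them is. The decisive missing ingredient is a \emph{nonzero lower barrier that pins down the behaviour at infinity}. The paper does this by sandwiching $u_n$ \emph{from both sides} with $\eta-C_2V\le u_n\le \eta+C_1V$ (constant exterior datum $\eta$, decaying $V$), which forces $u_\eta(x)\to\eta$ as $|x|\to\infty$. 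To get the upper barrier $\eta+C_1V$ and the lower barrier $\eta-C_2V$, one needs the \emph{quantitative} inequality $\LL V\le -\rho$ in $\R^N\setminus B_{r_0}$, not merely $\LL V\le 0$; this is what makes the argument close, because for the lower comparison you cannot throw away $-\rho c(\eta-C_2V)$ (it has the wrong sign). Here, and only here, is where $\alpha>2s$ is indispensable: it allows a positive $\beta<\alpha-2s$ so that $(-\Delta)^sV\sim\vartheta|x|^{-\beta-2s}$ dominates $\rho\lesssim|x|^{-\alpha}$. In contrast, your supersolution step uses $\rho c\zeta\ge 0$ to discard that term and never invokes $\alpha>2s$ at all, which is a telltale sign that the essential part of the argument (distinctness and nontriviality of the solutions) is not there.

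\smallskip

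In short: the bump-barrier with $\zeta=2-\eta$ has the fractional sign backwards for the range of exponents where the scheme could work, and even with a corrected supersolution the proof of $u_\lambda\not\equiv 0$ (and of $u_\lambda\neq u_{\lambda'}$) requires a two-sided comparison at infinity against $\eta\pm CV$ with $\LL V\le -\rho$, which is where $\alpha>2s$ truly enters. The remaining pieces of your outline (Dirichlet approximants $u_R$, $L^\infty$ bound, interior Schauder estimates, Arzel\`a--Ascoli and a diagonal argument) do match the paper's Steps I--IV.
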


 \begin{remark}\label{Remnonun}
 \rm{Note that we get non-uniqueness of bounded solutions. Our constraint on $\alpha$ in Theorem \ref{cor4} is compatible with the uniqueness result of Theorem \ref{teo3} case $(i)$. In fact, in that case, bounded solutions are not dealt with. Hence the two results are not in contrast. }
 \end{remark}



%

\section{Elliptic equations: proofs}\label{pe}\setcounter{equation}{0}

Let us observe that
\begin{equation*}
\begin{aligned}
&\text{if}\,\, f,g  \in \mathcal L^s(\R^N)\cap C_{loc}^{2s+\gamma}(\R^N)\,\,\,\,\quad\quad \text{with}\,\,\, s<\frac 1 2, \\
&\text{or}\,\,f,g\in \mathcal L^s(\R^N)\cap C^{1,2s+\gamma-1}_{loc}(\R^N)\,\,\,\, \text{with}\,\,\, s\geq \frac 1 2,
\end{aligned}
\end{equation*}
for some $\gamma>0$, and $fg\in \mathcal L^s(\R^N)$, then it is easily checked that
\begin{equation}\label{eq30}
(-\Delta)^s[f(x)g(x)]= f(x)(-\Delta)^{s}g(x) + g(x)(-\Delta)^sf(x)- \mathcal B(f,g)(x),
\end{equation}
for all $x\in \R^N\,$, where $\mathcal B(f,g)$ is the bilinear form given by
$$
\mathcal B(f,g)(x):=C_{N,s}\int_{\R^N}\frac{[f(x)-f(y)][g(x)-g(y)]}{|x-y|^{N+2s}}dy \quad \textrm{for all}\;\; x\in \R^N\,.
$$
\medskip

Let us now state a general criterion for uniqueness of nonnegative solutions to equation \eqref{elliptic} in $L^1_{\psi}(\R^N)$. We will use this result as a key tool for proving Theorem \ref{teo3}.

\begin{proposition} \label{prop3}
Let assumptions \eqref{h3}-\eqref{h1} be satisfied, and let $u\in C^2(\R^N)\cap\mathcal{L}^s(\R^N)$ 
be a solution to equation 
\eqref{elliptic} such that $|u|^p\in \mathcal L^s(\R^N),$ for some $p\geq 1$. We assume that there exists a 
\emph{positive function} $\zeta\in C^2(\R^N)\cap \mathcal{L}^s(\R^N)$ such that
\begin{align}
 &\mathrm{a)}\,\,\LL\zeta  - p\rho(x)\,c \,\zeta < 0 \quad \textrm{in $\R^N$}; \label{eq40} \\
 &\mathrm{b)}\,\,\zeta(x)+|\nabla \zeta(x)|\leq C \psi(x)\quad \textrm{for all}\;\; x\in \R^N.  \label{eq42d}
\end{align}
for some constant $C>0$ and for $\psi$ as in \eqref{eq12}. If $u\in L^p_\psi(\R^N)$, then
$$
u\equiv 0\quad \textrm{in}\;\; \R^N\,.
$$
\end{proposition}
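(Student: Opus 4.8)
The plan is to run a weighted $L^p$ energy (duality) argument, testing the equation against a well-chosen function built from the supersolution $\zeta$. First I would reduce matters to $u\geq 0$: since $|u|^p\in\mathcal L^s(\R^N)$ and $u\in L^p_\psi$, I would work with $v:=|u|$, or rather estimate $\int_{\R^N}|u|^p\,\zeta^{1-p}\cdot(\text{something})$ — but the cleanest route is to multiply equation \eqref{elliptic} by $p\,\mathrm{sign}(u)\,|u|^{p-1}\zeta^{1-p}$ (suitably cut off) and integrate. The key algebraic identity is Kato's inequality for the mixed operator: for the local part $\operatorname{sign}(u)\Delta u \leq \Delta|u|$ in the distributional sense, and for the nonlocal part $\operatorname{sign}(u)\,(-(-\Delta)^s u)\leq -(-\Delta)^s|u|$ pointwise, which follows from the integral representation \eqref{eq:defDeltas} and convexity of $t\mapsto|t|$. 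Hence $w:=|u|$ is a nonnegative classical subsolution: $\LL w-\rho c\,w\geq 0$ in the appropriate weak sense, with $w^p\in\mathcal L^s(\R^N)$ and $w\in L^p_\psi(\R^N)$. So it suffices to prove that any such nonnegative subsolution vanishes identically.

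Next I would introduce the substitution $w=\zeta\, h$, i.e. $h:=w/\zeta$, which is legitimate since $\zeta>0$ and $\zeta\in C^2$. Using the product rule for $\Delta$ and the bilinear splitting \eqref{eq30} for $(-\Delta)^s$, one computes
\[
\LL(\zeta h)=h\,\LL\zeta+\zeta\,\Delta h+2\nabla\zeta\cdot\nabla h+\mathcal B(\zeta,h),
\]
where $\mathcal B$ is the bilinear form of \eqref{eq30}. Combining this with the subsolution inequality for $w=\zeta h$ and with hypothesis \eqref{eq40} (which gives $\LL\zeta<p\rho c\,\zeta$) yields, after dividing by $\zeta$, a differential inequality for $h$ of the form
\[
\Delta h+2\,\frac{\nabla\zeta}{\zeta}\cdot\nabla h+\frac{1}{\zeta}\,\mathcal B(\zeta,h)-(-\Delta)^s h\;\geq\;-\,(p-1)\rho c\,h\quad\text{(roughly)},
\]
the point being that the zeroth-order coefficient now has a favorable sign when we test against $h^{p-1}$. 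Then I would test this inequality against $\varphi_R^2\, h^{p-1}$, where $\varphi_R$ is a standard cutoff equal to $1$ on $B_R$, supported in $B_{2R}$, with $|\nabla\varphi_R|\leq C/R$. Integrating by parts in the local terms produces a good term $\int \varphi_R^2\,h^{p-2}|\nabla h|^2$ and error terms controlled by $\int_{B_{2R}\setminus B_R}|\nabla\varphi_R|^2 h^p$ plus drift terms handled by the assumption \eqref{eq42d} that $|\nabla\zeta|\leq C\psi\leq C\zeta/(\text{nothing})$ — more precisely $|\nabla\zeta|/\zeta$ is controlled because both $\zeta$ and $|\nabla\zeta|$ are comparable to $\psi$. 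The nonlocal terms $\int\varphi_R^2 h^{p-1}\big(\zeta^{-1}\mathcal B(\zeta,h)-(-\Delta)^s h\big)$ are estimated by the standard fractional Kato/Stroock–Varopoulos inequality, which controls the cross terms by a Gagliardo-type quadratic form in $h^{p/2}$ with a manageable error; here one crucially uses $w^p,\zeta\in\mathcal L^s(\R^N)$ and $w\in L^p_\psi$ so that $\int h^p\zeta^2(1+|x|)^{-N-2s}\sim\int w^p\psi<\infty$ makes the tail contributions finite and the cutoff remainders vanish as $R\to\infty$.

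Letting $R\to\infty$, the error terms tend to zero precisely because $w\in L^p_\psi(\R^N)$ and $w^p\in\mathcal L^s(\R^N)$ bound the relevant integrals over $B_{2R}\setminus B_R$, so the left-hand good terms are forced to vanish, giving $\nabla h\equiv 0$ hence $h$ constant, and then feeding a nonzero constant back into \eqref{eq40} contradicts the strict inequality unless that constant is $0$; thus $w\equiv0$, i.e. $u\equiv0$. I expect the main obstacle to be the rigorous treatment of the nonlocal error terms: justifying the integration-by-parts/symmetrization for $(-\Delta)^s$ against the cutoff test function $\varphi_R^2 h^{p-1}$, controlling the long-range interactions between $B_R$ and its complement, and showing these vanish in the limit using only the stated integrability $|u|^p\in\mathcal L^s(\R^N)$ and $u\in L^p_\psi(\R^N)$ — this is where the interplay between the fractional kernel's tail and the weight $\psi_\beta$ must be balanced carefully, and it is the step that genuinely uses hypothesis \eqref{eq42d} linking $\zeta$ and its gradient to $\psi$.
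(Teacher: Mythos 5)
Your reduction step is fine as far as it goes (passing to $w=|u|$ via Kato-type inequalities is legitimate), but the core of your argument breaks on a sign/bookkeeping point that is not repairable as stated. The hypothesis \eqref{eq40} makes $\zeta$ a strict supersolution for the coefficient $p\rho c$, while your $w=|u|$ is a subsolution for the coefficient $\rho c$; after the ground-state substitution $h=w/\zeta$ the mismatch survives as the leftover term you yourself record, namely an inequality of the form $\zeta^{-1}\mathrm{div}(\zeta^{2}\nabla h)-(-\Delta)^{s}h+\zeta^{-1}\mathcal B(\zeta,h)\geq -(p-1)\rho c\,h$. Contrary to your claim, this zeroth-order term has the \emph{unfavorable} sign: testing against $\zeta^{2}\varphi_R^{2}h^{p-1}$ and integrating by parts puts $(p-1)\int\rho c\,\zeta^{2}h^{p}\varphi_R^{2}$ on the majorizing side, a term which neither vanishes as $R\to\infty$ nor can be absorbed by the Dirichlet term $(p-1)\int\zeta^{2}\varphi_R^{2}h^{p-2}|\nabla h|^{2}$; so no coercivity is produced. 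The endgame is also flawed: for $p=1$ there is no gradient term at all, and for $p>1$ a nonzero constant $h$ does not contradict \eqref{eq40}, since $\rho c\,\zeta\leq \LL\zeta< p\rho c\,\zeta$ is perfectly consistent. In addition, your error analysis invokes $|\nabla\zeta|/\zeta$ being bounded ``because $\zeta$ and $|\nabla\zeta|$ are comparable to $\psi$'', but \eqref{eq42d} is only an \emph{upper} bound; no lower bound $\zeta\gtrsim\psi$ is assumed, and the cutoff/nonlocal remainders in your scheme carry weights like $\zeta^{2-p}|u|^{p}$, which the hypotheses $u\in L^p_\psi(\R^N)$, $|u|^p\in\mathcal L^s(\R^N)$ do not control.

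The paper's proof keeps the factor $p$ on the solution side, which is exactly what makes \eqref{eq40} usable: one regularizes $|u|^{p}$ by $G_\alpha(u)=(u^{2}+\alpha)^{p/2}$, uses the convex Kato inequality (Lemma \ref{lemma2}) together with the chain rule for $\Delta$ to show $G_\alpha(u)$ is essentially a subsolution with coefficient $p\rho c$, and then pairs it with $v=\zeta\gamma_R$ through the symmetry identity \eqref{eq41}. Letting $\alpha\to0$ gives $\int_{\R^N}|u|^{p}\bigl[\LL v-p\rho c\,v\bigr]dx\geq0$; expanding $\LL(\zeta\gamma_R)$ via \eqref{eq30} and sending $R\to\infty$, the cutoff remainders vanish by Lemma \ref{lemma4} (this is precisely where \eqref{eq42d} and $u\in L^p_\psi$ enter, and only the upper bound on $\zeta,|\nabla\zeta|$ is needed), leaving $\int_{\R^N}|u|^{p}\bigl[\LL\zeta-p\rho c\,\zeta\bigr]dx\geq0$, which together with the strict inequality \eqref{eq40} forces $u\equiv0$. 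If you want to salvage a ground-state-transform route you would have to transplant the power $p$ correctly (e.g.\ work with a supersolution adapted to the coefficient $\rho c$, such as $\zeta^{1/p}$), but then the weights appearing in your cutoff and bilinear-form errors are no longer controlled by $\psi_\beta$ under the stated assumptions, so the duality argument of the paper is the one that actually closes.
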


\subsection{Proof of Proposition \ref{prop3}}

Take a cut-off function $\gamma\in C^\infty([0,\infty)), 0\leq \g\leq1$ with
\begin{equation}\label{eq31}
\gamma(r)=
\begin{cases}
1 &\textrm{if}\,\,0\leq r\leq \frac 1 2\\
0& \textrm{if\ \ } r\geq 1 \,
\end{cases};
\quad\quad\quad \gamma'(r)<0.
\end{equation}
Moreover, for any $R>0$ let
\begin{equation}\label{eq31bbbb}
\gamma_R(x):= \gamma\left(\frac{|x|}{R}\right) \quad \textrm{for all}\;\; x\in \R^N\,.
\end{equation}
To prove Proposition \ref{prop3} we shall use the next two results.

\begin{lemma}\label{lemma4}
Let assumption \eqref{h1} be satisfied, and let $\zeta\in C^2(\R^N)\cap \mathcal{L}^s(\R^N)$ be a \emph{positive
fun\-ction} in $\R^N$ satisfying \eqref{eq42d}. If $v\in L^1_{\psi}(\R^N)$, we have
\begin{equation}\label{eq41bis}
\int_{\R^N}|v(x)|\zeta(x)|(-\Delta)^s\gamma_R(x)|\,dx + \int_{\R^N}|v(x)|\,|\mathcal B(\zeta, \gamma_R)(x)|dx \longrightarrow 0,
\end{equation}
and
\begin{equation}\label{eq41tris}
\int_{\R^N}|v(x)|\zeta(x)|\Delta\g_R(x)|\,dx \,+\int_{\R^N}|v(x)|\,|\left\langle \nabla\zeta(x), \nabla\g_R(x)\right\rangle| dx\longrightarrow 0,
\end{equation}
as $R\to \infty$.
\end{lemma}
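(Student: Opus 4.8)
The plan is to estimate each of the four integrals separately, exploiting the fact that the gradients and Laplacians of the cut-off $\gamma_R$ are supported in the annulus $A_R := \{R/2 \le |x| \le R\}$ and carry negative powers of $R$, while $\zeta$ (and $\nabla\zeta$) are controlled by $\psi$ via \eqref{eq42d}, and $v \in L^1_\psi(\R^N)$. For the two \emph{local} terms in \eqref{eq41tris}, the estimates are elementary: since $|\Delta\gamma_R(x)| \le C R^{-2}\mathbf{1}_{A_R}(x)$ and $|\nabla\gamma_R(x)| \le C R^{-1}\mathbf{1}_{A_R}(x)$, and since $\zeta(x) + |\nabla\zeta(x)| \le C\psi(x)$, both integrands are bounded by $C R^{-1}|v(x)|\psi(x)\mathbf{1}_{A_R}(x)$, whose integral is at most $C R^{-1}\|v\|_{L^1_\psi}$, and this tends to $0$ as $R\to\infty$. (If one prefers not to use the $R^{-1}$ decay, one may simply note that the integrand is dominated by $C|v|\psi \in L^1(\R^N)$ and is supported in $A_R$, so dominated convergence gives the limit $0$ since $\mathbf 1_{A_R}\to 0$ pointwise.)

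The nonlocal terms in \eqref{eq41bis} are the substantive part. First I would record the pointwise bound on $(-\Delta)^s\gamma_R$: splitting the defining integral into the region $|x-y| \le 1$ (where a second-order Taylor expansion of $\gamma_R$ gives a factor $\|D^2\gamma_R\|_\infty \le C R^{-2}$) and $|x-y| > 1$ (where one uses $0 \le \gamma_R \le 1$ and integrability of $|z|^{-N-2s}$ at infinity), one obtains $|(-\Delta)^s\gamma_R(x)| \le C R^{-2s}$ uniformly in $x$, with constant independent of $R$; moreover $(-\Delta)^s\gamma_R$ is continuous and, for $|x|$ large compared with $R$, decays like $|x|^{-N-2s}$. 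Since $\zeta \le C\psi \in L^1(\R^N)$, the product $\zeta(x)|(-\Delta)^s\gamma_R(x)|$ is dominated by $C\psi(x)$, and it converges to $0$ pointwise for every fixed $x$ as $R\to\infty$ (for fixed $x$, $\gamma_R \to 1$ in a way that makes the nonlocal operator of the \emph{difference} $\gamma_R - 1$ tend to $0$ — concretely $(-\Delta)^s\gamma_R(x) = -(-\Delta)^s(1-\gamma_R)(x) \to 0$ because $1-\gamma_R$ vanishes on $B_{R/2}$ and $\|1-\gamma_R\|_\infty \le 1$ with the tail integral $\int_{|y|\ge R/2}|x-y|^{-N-2s}dy \to 0$). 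Hence, by dominated convergence against the majorant $C|v(x)|\psi(x)$, the first integral in \eqref{eq41bis} tends to $0$.

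For the bilinear form $\mathcal B(\zeta,\gamma_R)$, I would use $\mathcal B(\zeta,\gamma_R)(x) = C_{N,s}\int_{\R^N}\frac{[\zeta(x)-\zeta(y)][\gamma_R(x)-\gamma_R(y)]}{|x-y|^{N+2s}}\,dy$ and estimate $|\zeta(x)-\zeta(y)| \le |\nabla\zeta|_\infty$-type bounds for $|x-y|$ small (using \eqref{eq42d} to control $|\nabla\zeta|$, say by $C$ near $x$) and $|\zeta(x)-\zeta(y)| \le \zeta(x)+\zeta(y)$ for $|x-y|$ large, while $|\gamma_R(x)-\gamma_R(y)| \le \min\{1,\ CR^{-1}|x-y|\}$. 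Splitting the $y$-integral at $|x-y|=1$ as before yields a bound of the form $|\mathcal B(\zeta,\gamma_R)(x)| \le C R^{-\min(1,2s)}\,\omega(x)$ for a fixed $L^1$ weight $\omega$ comparable to $\psi$ (uniformly in $R$), and again $\mathcal B(\zeta,\gamma_R)(x) \to 0$ pointwise as $R\to\infty$ since $\gamma_R(x)-\gamma_R(y) \to 0$ for every fixed pair $(x,y)$. Dominated convergence against $C|v(x)|\omega(x) \in L^1(\R^N)$ then gives the vanishing of the second integral in \eqref{eq41bis}. The main obstacle is precisely this uniform-in-$R$ majorant for $\mathcal B(\zeta,\gamma_R)$: one must check that the bound on $\zeta$ and $\nabla\zeta$ in \eqref{eq42d} (i.e.\ $\zeta + |\nabla\zeta| \le C\psi$, with $\psi \in L^1$) together with the $s$-dependent split of the singular kernel produces a majorant that is integrable against $|v|\psi$, which forces a careful bookkeeping of the decay of $\psi$ versus the growth allowed in $\mathcal{L}^s$; once that majorant is in hand, the conclusion follows from the dominated convergence theorem.
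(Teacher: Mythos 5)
Your treatment of the two local terms in \eqref{eq41tris} is exactly the paper's argument: $|\Delta\gamma_R|\le \bar C R^{-2}$ and $|\nabla\gamma_R|\le \bar C R^{-1}$ on $B_R\setminus B_{R/2}$, combined with $\zeta+|\nabla\zeta|\le C\psi$ from \eqref{eq42d} and $v\in L^1_\psi(\R^N)$; this is the only part the paper proves in detail, since it is the new ingredient with respect to the fractional case. For the two nonlocal terms in \eqref{eq41bis} the paper does not argue directly at all: it simply invokes the proof of \cite[Lemma 3.1]{PV1}, whereas you sketch a self-contained argument (uniform-in-$R$ pointwise bounds on $(-\Delta)^s\gamma_R$ and on $\mathcal B(\zeta,\gamma_R)$, pointwise convergence to $0$, dominated convergence against $C|v|\psi$). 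That route is viable, but note that what you explicitly leave open --- a majorant for $\mathcal B(\zeta,\gamma_R)(x)$, uniform in $R$, which is integrable against $|v|$ --- is precisely the content of the estimate carried out in \cite{PV1}; as written, your proposal defers the genuinely delicate step rather than proving it.

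Two smaller inaccuracies to fix if you carry the sketch out. First, splitting the integral defining $(-\Delta)^s\gamma_R$ at $|x-y|=1$ gives only $|(-\Delta)^s\gamma_R(x)|\le C(R^{-2}+1)$, not $CR^{-2s}$; to get the decay $R^{-2s}$ (which would let you conclude without any pointwise-convergence argument) you should split at $|x-y|=R$. A uniform bound does suffice for your dominated-convergence argument, so this is a slip, not a fatal error. Second, you repeatedly use ``$\psi\in L^1(\R^N)$'': this fails in cases (i)--(iii) of Theorem \ref{teo3}, where $\beta\le N$. What is actually available, and what both your argument and the paper's really use, is that $|v|\psi\in L^1(\R^N)$ because $v\in L^1_\psi(\R^N)$; so any majorant you produce for $\zeta|(-\Delta)^s\gamma_R|$ or $|\mathcal B(\zeta,\gamma_R)|$ must be controlled by $C\psi(x)$ itself (uniformly in $R$), not merely by some integrable function, and verifying this for the far-field part of $\mathcal B(\zeta,\gamma_R)$ (contributions with $|x-y|$ large, estimated through $\zeta(y)\le C\psi(y)$ and $\zeta\in\mathcal{L}^s(\R^N)$) is where the bookkeeping you mention actually has to be done.
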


Observe that a similar result was obtained in \cite{MePu, PV1}. The novelty in Lemma \ref{lemma4} is the limit in \eqref{eq41tris}. For this reason, the proof of Lemma 
\ref{lemma4} is a slight modification of the proof of \cite[Lemma 3.1]{PV1}, hence we just show how to treat the extra term given by $-\Delta$.

\medskip

\begin{proof}[Proof of Lemma \ref{lemma4}]
In view of \eqref{eq42d}, by arguing as in the proof of \cite[Lemma 3.1]{PV1}, we get
\begin{equation}\label{eq33a}
\begin{aligned}
&\int_{\R^N}|v(x)|\zeta(x)|(-\Delta)^s\g_R(x)|\,dx  \longrightarrow 0\\
&\int_{\R^N}|v(x)|\,|\mathcal B(\zeta, \g_R)(x)|\,dx  \longrightarrow 0,
\end{aligned}
\end{equation}
as $R\to \infty$, hence \eqref{eq41bis} is proved.

\noindent To show \eqref{eq41tris}, let us observe that, due to \eqref{eq31} and \eqref{eq31bbbb}, for any $x\in B_R\setminus B_{R/2}$, for some $\bar C>0$
\begin{equation}\label{eq33b}
\begin{aligned}
&|\Delta \gamma_R|\le\frac{\bar C}{R^2}\,,\\
&|\left\langle \nabla \zeta(x),\nabla \gamma_R(x)\right\rangle|\le |\nabla\zeta(x)||\nabla\g_R|,\le \frac{\bar C}{R}\,|\nabla\zeta(x)|.
\end{aligned}
\end{equation}
 Then, due to \eqref{eq42d} and \eqref{eq33b}, we get
 \begin{equation}\label{eq33bis}
\begin{aligned}
\int_{\R^N} |v(x)|\zeta(x)|\Delta \gamma_R|\,dx  &\le \frac{\bar C}{R^2} \int_{B_R\setminus B_{R/2}} |v(x)|\zeta(x)\,dx \\
&\le \frac{\bar C C}{R^2} \int_{B_R\setminus B_{R/2}} |v(x)| \psi(x)\,dx,
\end{aligned}
\end{equation}
and
\begin{equation}\label{eq33}
\begin{aligned}
\int_{\R^N} |v(x)|\,|\left\langle \nabla \zeta(x),\nabla \gamma_R(x)\right\rangle|\,dx  &\le \frac{\bar C}{R}\,\int_{B_R\setminus B_{R/2}} |v(x)| \,|\nabla\zeta(x)|\,dx \\
&\le \frac{\bar C C}{R}\, \int_{B_R\setminus B_{R/2}} |v(x)| \psi(x)\,dx.
\end{aligned}
\end{equation}
Now, since $v\in L^1_{\psi}(\R^N)$, we obtain from \eqref{eq33} that
$$
\int_{\R^N}|v(x)|\zeta(x)|\Delta\g_R(x)|\,dx +\int_{\R^N}|v(x)|\,|\left\langle \nabla\zeta(x), \nabla\g_R(x)\right\rangle| dx \longrightarrow 0,
$$
as $R\to\infty$. This completes the proof.
\end{proof}

We use the next lemma (see, e.g., \cite[Lemma 3.4]{MePu}).
\begin{lemma}\label{lemma2}
Let $G\in C^2(\R)$ be a convex function, and let $u\in C^2(\R^N)\cap \mathcal{L}^s(\R^N)$.
We assume that $G(u)\in \mathcal L^s(\R^N)$. Then, we have the following inequality
\begin{equation}\label{eq35}
(-\Delta)^s[G(u)]\leq G'(u)(-\Delta)^s u\quad \textrm{in}\,\;\R^N\,.
\end{equation}
\end{lemma}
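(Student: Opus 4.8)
The plan is to derive the pointwise inequality \eqref{eq35} directly from the convexity of $G$, using the symmetric second-difference representation of the fractional Laplacian. First, since $G\in C^2(\R)$ and $u\in C^2(\R^N)$, the chain rule gives $G(u)\in C^2(\R^N)$; together with the hypothesis $G(u)\in\mathcal L^s(\R^N)$ and Proposition~\ref{prop:welldefDeltas}, this guarantees that $(-\Delta)^su$ and $(-\Delta)^s[G(u)]$ are well defined and continuous on $\R^N$. I would then use the representation, valid for every $w\in C^2(\R^N)\cap\mathcal L^s(\R^N)$ and every $s\in(0,1)$,
\begin{equation*}
(-\Delta)^sw(x)=-\frac{C_{N,s}}{2}\int_{\R^N}\frac{\delta_z w(x)}{|z|^{N+2s}}\,dz,\qquad \delta_z w(x):=w(x+z)+w(x-z)-2w(x),
\end{equation*}
where the integral is \emph{absolutely} convergent: near $z=0$ Taylor's formula gives $|\delta_z w(x)|\leq\|D^2 w\|_{L^\infty(\overline{B_1(x)})}\,|z|^2$, so the integrand is $O(|z|^{2-N-2s})$, which is integrable since $s<1$; near $z=\infty$ the crude bound $|\delta_z w(x)|\leq|w(x+z)|+|w(x-z)|+2|w(x)|$ together with $w\in\mathcal L^s(\R^N)$ gives integrability. (For $s\geq 1/2$ this is Proposition~\ref{prop:welldefDeltas}(ii); for $s<1/2$ the same symmetric representation is classical for $C^2(\R^N)\cap\mathcal L^s(\R^N)$ functions.) I apply it to $w=u$ and to $w=G(u)$.

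The core of the argument is a one-line convexity estimate. Fix $x,z\in\R^N$ and set $m:=u(x)$, $a:=u(x-z)$, $b:=u(x+z)$. Convexity of $G$ on $\R$ yields the two tangent-line inequalities $G(a)\geq G(m)+G'(m)(a-m)$ and $G(b)\geq G(m)+G'(m)(b-m)$; adding them gives
\begin{equation*}
\delta_z[G(u)](x)=G(a)+G(b)-2G(m)\ \geq\ G'(u(x))\,(a+b-2m)=G'(u(x))\,\delta_z u(x).
\end{equation*}
Observe that no monotonicity or sign condition on $G'$ is needed, since $G'(u(x))$ enters only as a constant with respect to the integration variable $z$.

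Finally, I multiply this inequality by the \emph{negative} weight $-\tfrac{C_{N,s}}{2}\,|z|^{-(N+2s)}$ (recall $C_{N,s}>0$), which reverses the inequality, and integrate over $z\in\R^N$. Since both $\int_{\R^N}|z|^{-(N+2s)}\,|\delta_z u(x)|\,dz$ and $\int_{\R^N}|z|^{-(N+2s)}\,|\delta_z[G(u)](x)|\,dz$ are finite by the first step, this integration is legitimate and the constant $G'(u(x))$ may be pulled outside the integral, yielding
\begin{equation*}
(-\Delta)^s[G(u)](x)\ \leq\ G'(u(x))\Bigl(-\frac{C_{N,s}}{2}\int_{\R^N}\frac{\delta_z u(x)}{|z|^{N+2s}}\,dz\Bigr)=G'(u(x))\,(-\Delta)^su(x)
\end{equation*}
for every $x\in\R^N$, which is \eqref{eq35}. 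I expect the only point requiring genuine care to be the convergence bookkeeping — legitimizing the symmetric representation uniformly in $s\in(0,1)$ and justifying the splitting of the integral and the extraction of $G'(u(x))$ — since the rest of the argument reduces to the elementary convexity estimate above.
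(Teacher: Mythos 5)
Your proof is correct: the tangent-line convexity estimate applied to the symmetric second-difference representation of $(-\Delta)^s$, with the hypotheses $u\in C^2(\R^N)\cap\mathcal L^s(\R^N)$ and $G(u)\in\mathcal L^s(\R^N)$ providing exactly the absolute convergence needed to integrate the pointwise inequality and pull out the constant $G'(u(x))$. The paper itself does not prove this lemma but simply cites \cite[Lemma 3.4]{MePu}, and your argument is the standard one behind that reference, so there is nothing to compare beyond noting that your convergence bookkeeping (including the symmetrized representation for all $s\in(0,1)$) is sound.
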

We are now ready to provide the
\begin{proof}[Proof of Proposition \ref{prop3}] Take a \emph{non-negative} 
function $v\in C^2(\R^N)$ with \emph{compact support}. 
Moreover, take a function $w\in \mathcal L^s(\R^N)\cap C^2(\R^N)$.
Integrating by parts we have:
\begin{equation}\label{eq41}
\begin{aligned}
\int_{\R^N} &v \left[\LL w  - \rho(x)  c(x) w \right]\,  dx  
 = \int_{\R^N} w\left[\LL v  - \rho(x)  c(x) v\right]\, dx.
\end{aligned}
\end{equation}
Let $p\geq 1.$ For any $\a>0$, set
\begin{equation}\label{eq37}
G_\alpha(r):=(r^2+\alpha)^{\frac p 2} \quad \textrm{for all}\;\; r\in\R\,.
\end{equation}
It is easily seen that
\begin{equation}\label{eq38}
\begin{aligned}
&G_\alpha'(r)=pr(r^2+\alpha)^{\frac p2 -1} \\
&G_\alpha''(r)=p(r^2+\alpha)^{\frac p2 -2}[\alpha+r^2(p-1)]\geq 0 \quad \textrm{for all}\;\; r\in\R\,.
\end{aligned}
\end{equation}
Therefore, observe that
\begin{equation}\label{eq39bis}
\Delta[G_{\alpha}(u)]=G_{\alpha}'(u)\Delta u + G_{\alpha}''(u)|\nabla u|^2\quad \text{in}\,\,\R^N.
\end{equation}
Due to Lemma \ref{lemma2} and \eqref{eq39bis}, and since $u$ solves \eqref{elliptic}, we obtain
\begin{equation}\label{eq42}
\begin{aligned}
 \LL G_\alpha(u) - \rho  c \,G_\alpha(u) &\geq  G_{\alpha}'(u)\Delta u + G_{\alpha}''(u)|\nabla u|^2 
  - G_{\alpha}'(u)(-\Delta)^su - \rho c \,G_\alpha(u) \\
& =  p\,u (u^2+\alpha)^{\frac p2-1}\Delta u + p(u^2+\alpha)^{\frac p2-2}[u^2(p-1)+\alpha]\,|\nabla u|^2 \\
&\quad\quad-p\,u (u^2+\alpha)^{\frac p2-1}(-\Delta)^s u-\rho\, c\, G_{\alpha}(u) \\
&\quad\quad +  p\rho\,  c\,u^2(u^2+\alpha)^{\frac p2-1} - p\rho\, c\,u^2(u^2+\alpha)^{\frac p2-1}\\
&\geq p\,u (u^2+\alpha)^{\frac p2 -1}[\LL u- \rho\,  c\, u] \\
&\quad\quad -  \rho\, c(u^2+\a)^{\frac p2-1}\left[u^2+\alpha-p\,u^2\right] \\
&=\rho\, c(u^2+\a)^{\frac p2-1}\left[u^2(p-1)-\alpha\right]\quad \quad\quad\textrm{in}\;\; \R^N\,.
\end{aligned}
\end{equation}
From \eqref{eq41} with $w=G_\alpha(u)$ and \eqref{eq42} it follows that
\begin{equation}\label{eq43}
\begin{aligned}
\int_{\R^N} v (u^2+\alpha)^{\frac p2-1} &\rho(x) c(x)[(p-1)u^2- \alpha] \,dx\\
&\leq \int_{\R^N} G_\alpha(u)\big[ \LL v -\rho(x) c\, v\big]\, dx.
\end{aligned}
\end{equation}
Letting $\a\to 0^+$ in \eqref{eq43}, by the dominated convergence
theorem we get
\begin{equation}\label{eq44}
\int_{\R^N} |u|^p\big[\LL v - p\,\rho\, c\, v \big]\, dx\,\geq \,0.
\end{equation}
For any $R>0$, we now choose
\[v(x):= \zeta(x)\g_R(x)\quad \textrm{for all}\;\; x\in \R^N\,,\]
where $\zeta$ satisfies assumption \eqref{eq40}. Clearly, $v$ is a \emph{non-negative} $C^2$-function
in $\R^N$ with compact support; moreover, a direct computation based on \eqref{eq30} gives
\begin{equation}\label{eq45}
\begin{aligned}
\LL v - \,p\,\rho\, c v&=\gamma_R \left[\LL \zeta  - p\,\rho\, c \,\zeta\right]
+\zeta \Delta\gamma_R 
+2\langle \nabla\zeta\,,\,\nabla \gamma_R\rangle 
 - \zeta(-\Delta)^s\gamma_R +\mathcal B(\zeta,\gamma_R).
\end{aligned}
\end{equation}
Thus, by combining \eqref{eq44}-\eqref{eq45}, we obtain
\begin{equation}\label{eq46}
\begin{aligned}
\int_{\R^N} |u|^p \gamma_R& \left[\LL\zeta - p\, \rho\, c \,\zeta\right]\, dx \\
&\ge -\int_{\R^N} |u|^p \left[\zeta \Delta\gamma_R +2\,\langle \nabla\zeta\,,\,\nabla \gamma_R\rangle-\zeta(-\Delta)^s\gamma_R +\mathcal B(\zeta,\gamma_R) \right]\,dx\\
&= -\int_{\R^N}|u|^p\left[\zeta \Delta\gamma_R +2\langle \nabla\zeta\,,\,\nabla \gamma_R\rangle\right] \,dx\\
&\qquad -\int_{\R^N} |u|^p \left[-\zeta(-\Delta)^s\gamma_R +\mathcal B(\zeta,\gamma_R)\right] \,dx\,.
\end{aligned}
\end{equation}
With \eqref{eq46} at hand, we can easily complete the proof of the proposition: in fact,
since $\zeta$ satisfies \eqref{eq42d}, we are entitled apply Lemma \ref{lemma4}: this, together with
 the monotone convergence theorem, allows us to pass to the limit as $R\to+\infty$ in \eqref{eq46},
 obtaining
\begin{equation}\label{eq46a}
\int_{\R^N} |u|^p \left[\LL\zeta - p\, \rho\, c \,\zeta\right]\,dx\geq 0\,.
\end{equation}
From \eqref{eq46a} and \eqref{eq40}, since $|u|^p\geq 0$, we can infer that $u\equiv 0$ in $\R^N$. This completes the proof.
\end{proof}

\medskip

\subsection{Proof of Theorem \ref{teo3}}

Before proving Theorem \ref{teo3}, we need some preliminary results. Observe that the proof of Proposition \ref{prop5} can be found in \cite[Proposition 3.3]{PV1}.

\begin{proposition}\label{prop5}
Let $\tilde w\in C^2([0,\infty))\cap L^\infty((0,\infty)).$ Let
$$w(x):=\tilde w(|x|)\quad \textrm{for all}\;\; x\in \R^N\,.$$
Set $r\equiv |x|$. If
\begin{equation}\label{eq317}
\tilde w''(r)+ \frac{N-2s+1}{r}\tilde w'(r)\leq 0,
\end{equation}
then $w$ is a supersolution to equation
\begin{equation}\label{eq318}
(-\Delta)^s w\,=\,0\quad \textrm{in}\;\; \R^N\,.
\end{equation}
\end{proposition}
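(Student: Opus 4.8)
The plan is to realise $(-\Delta)^s$ through the Caffarelli--Silvestre extension and to exploit the fact that the coefficient $N-2s+1$ appearing in \eqref{eq317} is exactly the one produced once the extension variable is added. To begin with, since $\tilde w\in L^\infty((0,\infty))$ the radial function $w=\tilde w(|\cdot|)$ belongs to $L^\infty(\R^N)\subseteq\mathcal L^s(\R^N)$, so by Proposition~\ref{prop:welldefDeltas} the quantity $(-\Delta)^s w$ is well defined and continuous on $\R^N$; it therefore suffices to prove $(-\Delta)^s w(x)\ge 0$ for $x\neq 0$, the value at the origin being recovered by continuity. I would also rewrite \eqref{eq317} in divergence form as $\big(r^{\,N-2s+1}\tilde w'(r)\big)'\le 0$; since $N-2s+1>0$ the quantity $r^{\,N-2s+1}\tilde w'(r)$ vanishes as $r\to0^+$, hence it is $\le 0$ on all of $(0,\infty)$ and, in particular, $\tilde w$ is nonincreasing --- a fact needed at the very end.

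Next I would bring in the $s$-harmonic extension $W$ of $w$, that is, the bounded solution of $\operatorname{div}\!\big(y^{1-2s}\nabla W\big)=0$ in $\R^{N+1}_+=\R^N\times(0,\infty)$ with $W(\cdot,0)=w$, obtained by convolving $w$ with the Poisson kernel (so that $\|W\|_{L^\infty}\le\|w\|_{L^\infty}$), and recall the trace identity
\[
(-\Delta)^s w(x)=\kappa_{N,s}\,\lim_{y\to0^+}\frac{w(x)-W(x,y)}{y^{2s}},\qquad \kappa_{N,s}>0.
\]
The crucial step is the choice of competitor
\[
\Phi(x,y):=\tilde w\big(\sqrt{|x|^2+y^2}\big),\qquad (x,y)\in\overline{\R^{N+1}_+},
\]
which is bounded, satisfies $\Phi(\cdot,0)=w=W(\cdot,0)$, and --- writing $\varrho=\sqrt{|x|^2+y^2}>0$ and differentiating --- fulfils
\[
\operatorname{div}\!\big(y^{1-2s}\nabla\Phi\big)=y^{1-2s}\Big(\tilde w''(\varrho)+\frac{N-2s+1}{\varrho}\,\tilde w'(\varrho)\Big)\le 0\quad\text{in }\R^{N+1}_+ ,
\]
the coefficient $N-2s+1=N+(1-2s)$ being precisely the weight of the radial Laplacian on $\R^{N+1}$ plus that of the Caffarelli--Silvestre weight $y^{1-2s}$, and the inequality being exactly \eqref{eq317}. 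Thus $\Phi$ is a bounded supersolution of the extension equation sharing the boundary datum of $W$.

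I would then invoke the comparison principle for the degenerate elliptic operator $\operatorname{div}(y^{1-2s}\nabla\,\cdot\,)$ (whose weight lies in the Muckenhoupt class $A_2$) on the half-space, applied to the bounded functions $W$ (a solution) and $\Phi$ (a supersolution) with $W=\Phi$ on $\{y=0\}$, to conclude $W\le\Phi$ in $\R^{N+1}_+$. The rest is immediate: fixing $x\neq0$ with $r=|x|$, for every $y>0$
\[
\frac{w(x)-W(x,y)}{y^{2s}}\ \ge\ \frac{w(x)-\Phi(x,y)}{y^{2s}}\ =\ \frac{\tilde w(r)-\tilde w\big(\sqrt{r^2+y^2}\big)}{y^{2s}}\ \ge\ 0 ,
\]
the last inequality because $\tilde w$ is nonincreasing; and a first-order Taylor expansion of $\tilde w$ at $r$ shows the right-hand side behaves like $-\tilde w'(r)\,y^{2-2s}/(2r)\to 0$ as $y\to0^+$. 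Since the limit defining $(-\Delta)^s w(x)$ exists and is bounded below by this vanishing lower bound, we obtain $(-\Delta)^s w(x)\ge 0$, i.e. $w$ is a supersolution of \eqref{eq318}.

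The main obstacle is not the computation with $\Phi$, which is mechanical, but the justification of the extension machinery: existence and boundedness of the Poisson extension $W$, the $y^{2s}$-expansion of $w-W$ at the boundary, and --- most delicate on an unbounded domain --- the comparison principle, which requires a Phragm\'en--Lindel\"of-type argument (for instance, bounding $W-\Phi$ on a half-ball $B_R^+$ by $\|w\|_{L^\infty}$ times the $\operatorname{div}(y^{1-2s}\nabla\,\cdot\,)$-harmonic measure of the spherical cap and letting $R\to\infty$). A minor technical point is that $\Phi$ and $w$ are only Lipschitz at the origin when $\tilde w'(0)\neq0$, which is precisely why the argument is run at $x\neq0$. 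An alternative route, avoiding the extension entirely, would write $(-\Delta)^s w(x)$ for $|x|=r$ as $C_{N,s}\int_0^\infty\big(\tilde w(r)-\tilde w(\tau)\big)\,\Theta_N(r,\tau)\,\tau^{N-1}\,d\tau$ with an explicit positive radial kernel $\Theta_N$ and read off the sign from \eqref{eq317}; but controlling that $\tau$-sign-changing integral is no easier than the comparison argument above, and less transparent.
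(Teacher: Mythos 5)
The paper does not reproduce a proof here; it defers entirely to \cite[Proposition~3.3]{PV1}. Your Caffarelli--Silvestre argument is correct and is almost certainly the argument underlying that reference: the coefficient $N-2s+1$ in \eqref{eq317} is precisely the radial coefficient of the operator $\operatorname{div}(y^{1-2s}\nabla\,\cdot\,)$ on $\R^{N+1}_+$ restricted to functions of $\varrho=\sqrt{|x|^2+y^2}$, which is what makes $\Phi(x,y)=\tilde w(\varrho)$ a supersolution of the extension equation and makes the whole scheme work. Two remarks. First, the final Taylor expansion is superfluous: once you know $W\le\Phi$ and $\tilde w$ is nonincreasing, the pointwise bound $w(x)-W(x,y)\ge w(x)-\Phi(x,y)\ge 0$ for $y>0$ already gives $(-\Delta)^s w(x)\ge 0$ in the limit, with no need to identify the rate. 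Second, and more seriously, the comparison $W\le\Phi$ on the \emph{unbounded} half-space is the one step that is asserted rather than proved: a bounded subsolution of a degenerate elliptic equation with nonpositive trace on $\{y=0\}$ is not automatically nonpositive on a noncompact domain, and the Phragm\'en--Lindel\"of device you gesture at (estimating $W-\Phi$ on $B_R^+$ by $\|w\|_\infty$ times the $y^{1-2s}$-weighted harmonic measure of the spherical cap and sending $R\to\infty$) must be carried out — including verifying that this weighted harmonic measure actually decays — for the proof to be complete. This is standard for $A_2$-weights via the De Giorgi--Nash--Moser theory of Fabes--Kenig--Serapioni, but it is not a triviality and should be spelled out or referenced precisely.
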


\medskip

In the sequel we shall use the next well-known result, concerning the hypergeometric function $_{2}F_1(a,b,c,s)\equiv F(a,b,c,s)$, with $a,b\in \R, c>0, s\in \R\setminus\{1\}$  (see \cite[Chapters 15.2, 15.4]{DLMF}).

\begin{lemma}\label{lemma3}
The following limits hold true:
\begin{itemize}
\item[(i)] if $c>a+b$, then
\[\lim_{s\to 1^-} F(a,b,c, s)=\frac{\Gamma(c)\Gamma(c-a-b)}{\Gamma(c-a)\Gamma(c-b)}\,;\]
\item[(ii)] if $c=a+b$, then
\[\lim_{s\to 1^-}\frac{F(a,b,c,s)}{-\log(1-s)}=\frac{\Gamma(a+b)}{\Gamma(a)\Gamma(b)}\,;\]
\item[(iii)] if $c<a+b$, then
\[\lim_{s\to 1^-}\frac{F(a,b,c,s)}{(1-s)^{c-a-b}}=\frac{\Gamma(c)\Gamma(a+b-c)}{\Gamma(a)\Gamma(b)}\,.\]
\end{itemize}
\end{lemma}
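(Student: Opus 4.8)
The statement to be proved is Lemma~\ref{lemma3}, the three asymptotic regimes for the Gauss hypergeometric function $F(a,b,c,s)={}_2F_1(a,b,c,s)$ as $s\to 1^-$. These are classical facts (essentially the connection formulas near the singular point $s=1$), so the proof is mostly a matter of quoting the right identities from \cite{DLMF} and checking that the hypotheses match. The plan is to treat the three cases separately, in each case reducing the limit $s\to 1^-$ to the value or leading singular behaviour of $F$ at its singular point.

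\emph{Case (i): $c>a+b$.} Here one invokes Gauss's summation theorem (see \cite[Eq.~15.4.20]{DLMF}): the series $\sum_{n\ge 0}\frac{(a)_n(b)_n}{(c)_n\,n!}$ converges absolutely when $\mathrm{Re}(c-a-b)>0$, and its sum is $\Gamma(c)\Gamma(c-a-b)/(\Gamma(c-a)\Gamma(c-b))$. Since the coefficients of the hypergeometric series are nonnegative for $s$ near $1$ in the regime of interest (or, more carefully, since the series converges uniformly on $[0,1]$ by the Weierstrass $M$-test once $c>a+b$), Abel's theorem on power series gives $\lim_{s\to 1^-}F(a,b,c,s)=F(a,b,c,1)$, which is exactly the Gauss value. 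This settles (i).

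\emph{Cases (ii) and (iii): $c\le a+b$.} Now $F(a,b,c,\cdot)$ is unbounded as $s\to 1^-$, and one uses the linear transformation formula connecting the behaviour at $s=0$ with that at $s=1$ (see \cite[Eq.~15.8.4 and 15.8.10]{DLMF}). Writing $z=1-s$, this formula expresses $F(a,b,c,s)$ as a combination of $F(a,b,a+b-c+1,z)$ and $z^{c-a-b}F(c-a,c-b,c-a-b+1,z)$ (with explicit Gamma-function coefficients), valid for $c-a-b\notin\mathbb Z$; as $z\to 0^+$ both hypergeometric factors tend to $1$, so in case (iii) ($c<a+b$, $c-a-b\notin\mathbb Z$) the second term dominates and one reads off $F(a,b,c,s)\sim \frac{\Gamma(c)\Gamma(a+b-c)}{\Gamma(a)\Gamma(b)}(1-s)^{c-a-b}$, giving the stated limit. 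For case (ii), $c=a+b$, the two exponents in the connection formula coincide and the generic formula degenerates; here one uses instead the logarithmic connection formula (see \cite[Eq.~15.8.10]{DLMF}), which yields $F(a,b,a+b,s)= \frac{\Gamma(a+b)}{\Gamma(a)\Gamma(b)}\bigl(-\log(1-s)\bigr)\bigl(1+o(1)\bigr)$ as $s\to1^-$, i.e.\ the claimed ratio limit. One should note that in cases (iii) and (ii) the precise constant requires no extra work once the DLMF formula is cited with the correct normalization; the $o(1)$ error terms are controlled because the auxiliary hypergeometric series are analytic at $z=0$ with value $1$ there.

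\emph{Remarks on the main difficulty and edge cases.} There is no real analytic obstacle: the content is bookkeeping with the standard connection formulas. The one point that needs care is the possibility $c-a-b\in\mathbb Z$ in case (iii) (the non-degenerate connection formula is then invalid and the expansion acquires a logarithmic correction at a different order); however, in the applications of this lemma within the paper the parameters are of the form coming from \eqref{eq317}--\eqref{eq318}, so $c-a-b$ is a specific non-integer (a multiple of $s$, or $N/2-$something), and one may either restrict the statement to $c-a-b\notin\mathbb Z$ or simply verify case by case that the relevant parameter values avoid the exceptional set. A second minor point is justifying the interchange of limit and summation in case (i): this is legitimate by Abel's theorem since $\sum \frac{(a)_n(b)_n}{(c)_n n!}$ converges (absolutely, as $c>a+b$). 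With these caveats, the three limits follow directly from \cite[Chapters 15.2, 15.4, 15.8]{DLMF}, and this is exactly how the proof will be organized.
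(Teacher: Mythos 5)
Your proposal is correct and follows essentially the same route as the paper, which offers no proof at all beyond citing \cite[Chapters 15.2, 15.4]{DLMF}: the three limits are precisely the standard DLMF asymptotics at $s=1$ (Gauss's theorem plus Abel's theorem for (i), the connection/logarithmic formulas for (ii)--(iii)), which is what you spell out. Your caution about $c-a-b\in\mathbb{Z}$ in case (iii) is harmless but not needed, since the limit in \cite[Eq.~15.4.23]{DLMF} holds without that restriction (the exceptional cases only acquire logarithmic corrections at lower order), and in the paper's application the parameters avoid the exceptional set anyway.
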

For further references, observe that
\begin{equation}\label{eq319}
\Gamma(t)>0\quad\textrm{for all}\;\; t>0, \quad \Gamma(t)<0\quad \text{for all}\;\; t\in (-1,0)\,.
\end{equation}
For the proof of Lemma \ref{lemma3}, we refer the reader to \cite[Chapters 15.2, 15.4]{DLMF}.
\bigskip

\begin{proof}[Proof of Theorem \ref{teo3}\,.]
Let $\psi=\psi(|x|)$ be defined as in \eqref{eq12}, where $\beta>0$ is a constant to be chosen. Set $r\equiv |x|$. We have:
\begin{align}
&\psi'(r)=-\b r(1+r^2)^{-\left(\frac{\b}2+1\right)}\quad \text{for all}\;\; r>0\,, \label{eq320}\\
&\psi''(r)=\b (1+r^2)^{-\left(\frac{\b}{2}+2\right)}[-1 +(\b+1)r^2]\quad \text{for all}\;\; r>0\,. \label{eq321}
\end{align}
At first observe that \eqref{eq42d} is satisfied with the choice $\zeta=\psi$.

\noindent Now, we want to show that $\psi$ solves \eqref{eq40}, for properly chosen $\beta>0$. To do so, we consider separately the cases $(i)-(iv)$.

Suppose that $(i)$ holds.
In view of \eqref{eq320} and \eqref{eq321}, we have:
\begin{equation}\label{eq322}
\begin{split}
&\psi''(r)+\frac{N-2s
+1}{r}\psi'(r)\\
&\quad=\b(1+r^2)^{-\left(\frac{\b}2+2\right)}[(\b-N+2s)r^2-(N-2s+2)]\quad \text{for all}\;\; r>0\,,
\end{split}
\end{equation}
and
\begin{equation}\label{eq322bis}
\begin{split}
&\psi''(r)+\frac{N-1}{r}\psi'(r)=\b(1+r^2)^{-\left(\frac{\b}2+2\right)}[(\b-N+2)r^2-N]\quad \text{for all}\;\; r>0\,.
\end{split}
\end{equation}
Since $0<\b\leq N-2s$, by \eqref{eq322},
\begin{equation}\label{eq323}
\psi''(r)+\frac{N-2s +1}{r}\psi'(r)\leq 0 \quad \text{for all}\;\; r>0\,.
\end{equation}
Now, if $0<\b<N-2$, by \eqref{eq322bis} we also have
\begin{equation}\label{a}
\psi''(r)+\frac{N-1}{r}\psi'(r)\leq 0 \quad \text{for all}\;\; r>0\,.
\end{equation}
Hence, from \eqref{eq323}, \eqref{a} and \eqref{h3},  if $0<\b<N-2$, we obtain, for all $x\in \R^N$,
\begin{equation}\label{eq327}
\begin{aligned}
\LL\psi(x)  -p\,\rho(x)c(x)\psi(x)& \le-  p\,c_0C_0 (1+|x|^2)^{-\frac{\beta}{2}-\frac{\alpha}2}< 0.
\end{aligned}
\end{equation}
On the other hand, if $N-2\le \b<N-2s$, from \eqref{eq323} and \eqref{h3} we obtain, for all $x\in \R^N$,
\begin{equation}\label{eq327bis}
\begin{aligned}
& \LL\psi(x) -p\,\rho(x)c(x)\psi(x) \\
& \qquad \le (1+|x|^2)^{-\frac{\beta}{2}}
\left\{\b(\b-N+2)(1+|x|^2)^{-1}- p\,c_0\,C_0(1+|x|^2)^{-\frac{\alpha}2}\right\}.
\end{aligned}
\end{equation}
(qui \`e dove serve, credo, $\alpha\leq 2$).
Hence, to have that
$$
\LL \psi(x) -p\,\rho(x)c(x)\psi(x)<0,
$$
we require that $$p\,c_0>\frac{\b}{C_0}\left(\b-N+2\right).$$
By \eqref{eq327}, \eqref{eq327bis} and Proposition \ref{prop3}, the conclusion follows, when $(i)$ holds.
\medskip

In order to obtain the thesis of the theorem in the case when 
$\beta>N-2s$, we first note that the following key identity
holds (see the proof of Corollary 4.1 in \cite{FerrVerb}):
\begin{equation}\label{eq328}
-(-\Delta)^s\psi(r)= - \check C F(a,b,c, - r^2)\quad \textrm{for all}\;\; r>1\,,
\end{equation}
where $\check C>0$ is a positive constant, and
$$
a=\frac N 2 +s,\quad b= \frac{\b}2+s, \quad c=\frac N2\,.
$$
By Pfaff's transformation,
\begin{equation}\label{eq329}
F(a,b,c, -r^2)=\frac 1{(1+r^2)^b}F\left(c-a, b, c, \frac{r^2}{1+r^2}\right)\quad \textrm{for all}\,\;r>1\,.
\end{equation}
 Now, if $(ii)$ holds, 
 from Lemma \ref{lemma3}-$(i)$, \eqref{eq328} and \eqref{eq329} we have
\begin{equation}\label{eq330}
-(-\Delta)^s\psi(r)\leq \check C(C_1 +\varepsilon)(1+r^2)^{-\left(s+\frac{\beta}2\right)}\quad \text{whenever}\;\;r>R_\varepsilon\,,
\end{equation}
(for any $\varepsilon>0$ and for some $R_\varepsilon>1$), 
where
$$
C_1=-\frac{\Gamma(\frac N 2)\Gamma\left(\frac{N-\b}2\right)}{\Gamma\left(\frac{N}{2}+s\right)\Gamma\left(\frac{N-\b}{2}-s\right)}>0
$$
(see \eqref{eq319}). Since $\alpha\leq 2s,$ from \eqref{eq330} and due to \eqref{h3}, \eqref{eq27}, we obtain for all $|x|=r>R_\varepsilon$
$$
\begin{aligned}
\LL\psi(x)-p\,\rho(x)c(x)\psi(x) &
 \le (1+|x|^2)^{-\frac{\beta}{2}} \Big\{\check C(C_1+\varepsilon)(1+|x|^2)^{-s}-\beta N(1+|x|^2)^{-1}\\
 &\quad\quad+\beta(\beta+2)|x|^2(1+|x|^2)^{-2}-p\,c_0 C_0(1+|x|^2)^{-\frac{\alpha}2}\Big\}\\
 &\le (1+|x|^2)^{-\frac{\beta}{2}} \Big\{\check C(C_1+\varepsilon)(1+|x|^2)^{-s}\\
 &\quad\quad+\beta(\beta+2)(1+|x|^2)^{-1}-p\,c_0 C_0(1+|x|^2)^{-\frac{\alpha}2}\Big\},
\end{aligned}
$$
Hence,
\begin{equation}\label{eq331}
\LL\psi(x)-p\,\rho(x)c(x)\psi(x)\le0 \quad\textrm{for all}\;\; |x|>R_{\varepsilon}\,,
\end{equation}
if we require that
\begin{equation}\label{eq332}
p\,c_0>\frac 2{C_0}\max\{\check C(C_1+\epsilon)\,;\,\beta(\beta+2)\}\,.
\end{equation}
On the other hand, for all $|x|\leq R_\varepsilon$,
\begin{equation}\label{eq333}
\begin{aligned}
\LL\psi(x) &-p\,\rho(x)c(x)\psi(x)\\
& \le-\left\{-M_{\varepsilon,\beta}-\beta(\beta+2) (1+R_{\varepsilon}^2)^{-1}+p\,c_0\, C_0 (1+R_{\varepsilon}^2)^{-\frac{\beta}{2}-\frac{\alpha}2}\right\}\leq 0
\end{aligned}
\end{equation}
by taking
\begin{equation}\label{eq334}
 p\,c_0>\frac 2{C_0}\left[M_{\epsilon,\b}+\beta(\beta+2) (1+R_{\varepsilon}^2)^{-1}\right](1+R^2_\varepsilon)^{\frac{\beta}2+\frac{\alpha}2},
\end{equation} where
$$
M_{\epsilon,\b}:= \max_{x\in\bar B_{R_\varepsilon}}\big\{\big|-(-\Delta)^s\psi(|x|)\big|\big\}\,.
$$
By \eqref{eq331}, \eqref{eq333} the conclusion follows by Proposition \ref{prop3}, when $(ii)$ holds.

\medskip

Suppose that $(iii)$ holds. From Lemma \ref{lemma3}-$(ii)$ and \eqref{eq329}, for any $\varepsilon>0$, for some $R_\varepsilon>1$, we have:
\begin{equation}\label{eq335}
-(-\Delta)^s\psi(r)\leq \check C(C_2 +\varepsilon)(1+r^2)^{-\left(s+\frac{\b}2\right)}\log(1+r^2)\quad \text{whenever}\;\;|x|>R_\varepsilon\,,
\end{equation}
where
$$C_2=-\frac{\Gamma\left(\frac{\beta}2\right)}{\Gamma(-s)\Gamma\left(\frac{\beta}2+s\right)}>0$$
(see \eqref{eq319}). Since $\alpha<2s$, from \eqref{eq335}, \eqref{eq27} and \eqref{h3}, we obtain for all $|x|>R_\varepsilon$,
\begin{equation}\label{eq336}
\begin{aligned}
\LL\psi(x)&-p\,\rho(x)\,c(x)\psi(x) \\
&\leq (1+|x|^2)^{-\frac{\beta}2}\left\{\check C(C_2+\varepsilon)(1+|x|^2)^{-s}\log(1+|x|^2)\right.-\beta N(1+|x|^2)^{-1}\\
&\quad\quad+\beta(\beta+2)|x|^2(1+|x|^2)^{-2}-p\,c_0 C_0 (1+|x|^2)^{-\frac{\alpha}2}\Big\}\\
 &\le (1+|x|^2)^{-\frac{\beta}{2}} \Big\{\check C(C_2+\varepsilon)(1+|x|^2)^{-s}\log(1+|x|^2)\\
 &\quad\quad+\beta(\beta+2)(1+|x|^2)^{-1}-p\,c_0 C_0(1+|x|^2)^{-\frac{\alpha}2}\Big\}<0,
\end{aligned}
\end{equation}
taking a possibly larger $R_\varepsilon>1$, and requiring that
\begin{equation}\label{eq336b}
p\,c_0>\frac 2{C_0 }\max\left\{\check C(C_2+\varepsilon),\beta(\b+2)\right\}.
\end{equation}
Combining \eqref{eq336} with \eqref{eq333} the conclusion follows, due to Proposition \ref{prop3} also in the case $(iii)$.
\smallskip

 Finally, suppose that $(iv)$ holds.  From Lemma \ref{lemma3}-$(iii)$ and \eqref{eq329}, for any $\varepsilon>0$, for some $R_\varepsilon>1$, we have:
\begin{equation}\label{eq337}
-(-\Delta)^s\psi(r)\leq \check C(C_3 +\varepsilon)(1+r^2)^{-\left(s+\frac N 2\right)}\quad \text{whenever}\;\;r>R_\varepsilon\,,
\end{equation}
where, see \eqref{eq319},
$$C_3=-\frac{\Gamma(\frac N 2)\Gamma\left(\frac{\beta-N}2\right)}{\Gamma(-s)\Gamma\left(\frac{\beta}{2}+s\right)}>0.
$$
Since $\alpha+\beta\leq 2s+N$, from \eqref{eq337}, \eqref{eq27}, and due to \eqref{h3}, we obtain for all $|x|>R_\varepsilon$
\begin{equation}\label{eq338}
\begin{aligned}
\LL \psi(x)-p\,\rho(x)c(x)\psi(x)
& \leq(1+|x|^2)^{-\frac{\beta}2}\left\{\check C(C_3+\varepsilon)(1+|x|^2)^{-s-\frac{N}{2}+\frac{\beta}2}\right.-\beta N(1+|x|^2)^{-1}\\
&\left.\quad\quad+\beta(\beta+2)|x|^2(1+|x|^2)^{-2}-p\,c_0 C_0 (1+|x|^2)^{-\frac{\alpha}2}\right\}\\
&\le (1+|x|^2)^{-\frac{\beta}{2}} \Big\{\check C(C_3+\varepsilon)(1+|x|^2)^{-s-\frac{N}{2}+\frac{\beta}2}\\
 &\quad\quad+\beta(\beta+2)(1+|x|^2)^{-1}-p\,c_0 C_0(1+|x|^2)^{-\frac{\alpha}2}\Big\}<0,
\end{aligned}
\end{equation}
by requiring that
\begin{equation}\label{eq339}
p\,c_0>\frac 2{C_0}\max\{\check C(C_3+\epsilon)\,;\,\beta(\b+2)\}\,.
\end{equation}
On the other hand, \eqref{eq333} holds true, provided \eqref{eq334} is satisfied. In view of \eqref{eq333} and \eqref{eq338}, the conclusion follows by Proposition \ref{prop3}, when $(iv)$ holds. This completes the proof.
\end{proof}

\section{Existence of infinitely many solutions: proofs}\setcounter{equation}{0}\label{mix}
In this section we provide the proof
of  Theorem \ref{cor4}. In order to do this, we exploit some results
from the Weak Theory of the operator $\LL$ which are recalled in Appendix \ref{sec:appendix}.
\medskip

\noindent \textbf{Notation.} Let $x_0\in\R^N$ and $R > 0$ be fixed. In what follows, we denote by
$B_R(x_0)$ the open ball of $\R^N$ with centre $x_0$ and radius $R$, that is,
$$B_R(x_0):=\{x\in\R^N:\,|x-x_0|<R\};$$
in the particular case when $x_0 = 0$, we simply write $B_R$ in place of $B_R(0)$.
\medskip

To show nonuniqueness for problem \eqref{elliptic} we first establish the following
Weak Maximum Principle for \emph{classical} sub/supersolutions of the equation $\LL u + V(x)u = 0$. 
\begin{theorem} \label{thm:WMPclassical}
 Let $\varnothing\neq \Omega\subseteq\R^N$ be a \emph{bounded} open set, and let 
 $V\in C(\Omega),\,\text{$V\geq 0$ in $\Omega$}$. Mo\-re\-o\-ver, let $u\in C^2(\Omega)\cap C(\overline{\Omega})
 \cap \mathcal{L}^s(\R^N)$ be such that
 \begin{equation} \label{eq:conditionsWMPclassical}
 \begin{split}
 	\mathrm{i)}&\,\,\text{$\LL u - V(x)u\leq 0$ in $\Omega$}; \\
 	\mathrm{ii)}&\,\,\text{$u\geq 0$ \emph{pointwise on $\partial\Omega$}}; \\
 	\mathrm{iii)}&\,\,\text{$u\geq 0$ a.e.\,in $\R^N\setminus\overline{\Omega}$}.
 \end{split}
 \end{equation}
 Then, $u\geq 0$ pointwise in $\Omega$.
\end{theorem}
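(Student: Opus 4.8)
The plan is to prove this Weak Maximum Principle by contradiction, exploiting the nonlocal structure of $\LL$ together with the classical argument for the local part. Suppose $u$ is \emph{not} nonnegative in $\Omega$; since $\Omega$ is bounded, $u\in C(\overline\Omega)$, and $u\geq 0$ on $\partial\Omega$, the function $u$ attains a negative minimum at some point $x_0\in\Omega$. First I would record that $\LL u(x_0) = \Delta u(x_0) - (-\Delta)^s u(x_0)$ is well-defined (by Proposition \ref{prop:welldefDeltas}, since $u\in C^2(\Omega)\cap\mathcal L^s(\R^N)$) and examine the sign of each piece at $x_0$. Because $x_0$ is an interior minimum of $u$ over $\overline\Omega$, the Hessian is positive semidefinite, so $\Delta u(x_0)\geq 0$.

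The core of the argument is the fractional term. Write
\[
(-\Delta)^s u(x_0) = C_{N,s}\,\mathrm{P.V.}\int_{\R^N}\frac{u(x_0)-u(y)}{|x_0-y|^{N+2s}}\,dy.
\]
For $y\in\overline\Omega$ we have $u(x_0)-u(y)\leq 0$ since $x_0$ is the global minimum over $\overline\Omega$; for a.e.\ $y\in\R^N\setminus\overline\Omega$ we have $u(y)\geq 0 > u(x_0)$ by hypothesis (iii), so again $u(x_0)-u(y) < 0$. Hence the integrand is $\leq 0$ a.e., and it is strictly negative on the positive-measure set $\R^N\setminus\overline\Omega$ (using that $\Omega$ is bounded, so this set is nonempty of infinite measure, and $u(x_0)<0\leq u(y)$ there). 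Therefore $(-\Delta)^s u(x_0) < 0$, which gives $-(-\Delta)^s u(x_0) > 0$, and combining, $\LL u(x_0) = \Delta u(x_0) - (-\Delta)^s u(x_0) > 0$. On the other hand, from hypothesis (i) and $V(x_0)\geq 0$, $u(x_0)<0$ we get $\LL u(x_0) \leq V(x_0)u(x_0)\leq 0$, a contradiction. This forces $u\geq 0$ in $\Omega$.

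One technical point to handle with care is the P.V.\ integral: I would first argue the P.V.\ limit exists (which it does, since $u\in C^2$ near $x_0$ so the singular part is integrable by the usual second-order cancellation, and $u\in\mathcal L^s(\R^N)$ controls the tail), and then split the integral as $\int_{B_\varepsilon(x_0)} + \int_{\R^N\setminus B_\varepsilon(x_0)}$; on $B_\varepsilon(x_0)$ the numerator $u(x_0)-u(y)\leq 0$ by minimality, and on the complement one uses the sign analysis above, so passing $\varepsilon\to 0^+$ preserves the inequality $(-\Delta)^s u(x_0)\leq 0$, with strictness coming from the exterior region. I expect the main (mild) obstacle to be purely bookkeeping: justifying that the strict negativity on $\R^N\setminus\overline\Omega$ is not destroyed in the limit and is genuinely strict — this rests on $\overline\Omega$ being compact so $\R^N\setminus\overline\Omega$ has infinite measure and the distance $|x_0-y|$ stays bounded on a set of positive measure there, making the contribution a strictly negative number bounded away from $0$. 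No deeper machinery (not even the weak theory from the appendix) is needed for this particular statement.
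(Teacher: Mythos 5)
Your proof is correct and follows essentially the same route as the paper's: argue by contradiction, locate a negative interior minimum $x_0$, get $\Delta u(x_0)\geq 0$ and $V(x_0)u(x_0)\leq 0$, and then exploit the sign of the integrand in $(-\Delta)^s u(x_0)$. The only (minor) difference is in the final step: the paper squeezes $(-\Delta)^s u(x_0)$ to zero (using the equation for $\geq 0$ and the sign of the integrand for $\leq 0$), then concludes $u\equiv u(x_0)$ a.e.\ to contradict (iii), whereas you observe directly that the exterior contribution over $\R^N\setminus\overline{\Omega}$ is strictly negative and finite (using $u\in\mathcal{L}^s(\R^N)$ and $|x_0-y|$ bounded away from $0$ there), which gives $(-\Delta)^s u(x_0)<0$ and hence $\LL u(x_0)>0$ at once — a slightly more direct way to reach the same contradiction.
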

The proof of this result follows the same lines of that of \cite[Theorem 1.3]{BDVV},
where the same result is proved in the case $V\equiv 0$ (and under
the stronger assumption $u\in C(\R^N)$); however, we present it here for the sake of completeness.
\begin{proof}
 We argue by contradiction, assuming that there exists $\xi\in\Omega$ such that
 $u(\xi) < 0$. Since $u$ is continuous on $\overline{\Omega}$, and since
 $u\geq 0$ pointwise on $\partial\Omega$, we can then find $x_0\in \Omega$ such that
 $$u(x_0) = \min_{\overline{\Omega}}u < 0.$$
 This shows that $x_0$ is an \emph{interior minimum point of $u$}, and thus we have
 \medskip
 
 a)\,\,$\Delta u(x_0)\geq 0$ (recall that $u\in C^2(\Omega)$);
 \vspace*{0.1cm}
 
 b)\,\,$V(x_0)u(x_0)\leq 0$ (recall that, by assumption, $V\geq 0$ in $\Omega$).
 \medskip
 
 \noindent This, together with the fact that $\LL u-V(x)u\geq 0$ in $\Omega$, implies
 $$-(-\Delta)^s u(x_0) = \big(\LL u (x_0) - V(x_0)u(x_0)\big) - \Delta u(x_0)+V(x_0)u(x_0) \leq 0.$$
 On the other hand, since $u\geq u(x_0)$ pointwise in $\overline{\Omega}$ (by definition of $x_0$) and since
 $u\geq 0 > u(x_0)$ a.e.\,in $\R^N\setminus\overline{\Omega}$ (see $\mathrm{(iii)}$ 
 in \eqref{eq:conditionsWMPclassical}), 
 by definition of $(-\Delta)^s u$ we have
 $$0\leq (-\Delta)^s u(x_0) = C_{N,s}\,\mathrm{P.V.}\int_{\R^N}
 \frac{u(x_0)-u(y)}{|x-y|^{N+2s}}\,dy \leq 0,$$
 and thus $(-\Delta)^s u(x_0) = 0$. From this, since $u(x_0)-u(y)\leq 0$ for (a.e.) $y\in\R^N$, 
 we then conclude that $u\equiv u(x_0) < 0$ 
 pointwise in $\overline{\Omega}$ and a.e.\,in $\R^N\setminus\Omega$,
 but this is in contradiction with \eqref{eq:conditionsWMPclassical}-$\mathrm{(iii)}$.
\end{proof}

\begin{proposition}\label{prop32}  
Let $\rho, c\in C^\gamma(\mathbb R^N)$ for some $\gamma\in (0,1)$.
We assume that $\rho>0$ in $\R^N$, and that assumption \eqref{eq27} holds.  
If there exist $r_0 > 0$ and a function $V$ such that
\begin{equation} \label{eq26}
\begin{split}
\mathrm{i)}&\,\,\text{$V\in C^2(\R^N\setminus \overline{B}_{r_0})\cap \mathcal{L}^s(\R^N)$ \quad
and \quad $V\in C(\R^N\setminus B_{r_0})$}; \\
\mathrm{ii)}&\,\,\text{$V \geq 0$ in $\R^N\setminus \overline{B}_{r_0}$ \quad and \quad 
$\exists\,\,m_0 > 0$ s.t.\,$V(x)\geq m_0$ for all $x\in\overline{B}_{r_0}$}; \\
\mathrm{iii)}&\,\,\text{$\LL V\leq -\rho$ in $\R^N\setminus B_{r_0}$}; \\
\mathrm{iv)}&\,\,\text{$V(x)\to 0$ as $|x|\to\infty$}.
\end{split}
\end{equation}
\emph{(}for some $r_0 > 0$\emph{)}, 
then there exist infinitely many bounded solutions $u$ of problem \eqref{elliptic}. In particular, for any 
$\eta\in \R$, there exists a solution $u$ to problem \eqref{elliptic} such that
$$
\lim_{|x|\to\infty} u(x)=\eta.
$$
\end{proposition}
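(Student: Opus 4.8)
\textbf{Plan for the proof of Proposition \ref{prop32}.}
The plan is to use the auxiliary function $V$ as a building block that allows one to solve suitable Dirichlet problems on large balls and then pass to the limit. Fix $\eta\in\R$; the goal is to produce a bounded solution $u$ of \eqref{elliptic} with $u(x)\to\eta$ as $|x|\to\infty$, and the fact that distinct values of $\eta$ give distinct solutions will yield infinitely many of them. First I would fix $r_0>0$ and the function $V$ as in \eqref{eq26}, and set $\eta_\pm(x):=\eta\pm V(x)$; by \eqref{eq26}-(iii) and the nonnegativity of $c$ one checks that $\LL\eta_- - \rho c\,\eta_- \ge \rho(1-c\,\eta) \cdot(\text{sign adjustment})$, so that, after possibly rescaling $V$ by a large constant (note \eqref{eq26}-(iii) is stable under multiplication of $V$ by constants $\ge 1$ only if we also use $c\ge c_0$ and boundedness of $V$), $\eta_-$ is a subsolution and $\eta_+$ a supersolution of \eqref{elliptic} outside $\overline B_{r_0}$, with $\eta_-\le \eta\le \eta_+$ everywhere and $\eta_\pm\to\eta$ at infinity.

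Next I would solve, for each $R>r_0$, the mixed Dirichlet problem
\begin{equation*}
\begin{cases}
\LL u_R - \rho c\,u_R = 0 & \text{in }B_R,\\
u_R = \eta & \text{in }\R^N\setminus B_R,
\end{cases}
\end{equation*}
using the Weak Theory of $\LL$ recalled in Appendix \ref{sec:appendix} (existence and uniqueness of a weak solution, together with interior regularity $u_R\in C^2(B_R)$ coming from $\rho,c\in C^\gamma$ and the interior Schauder/regularity theory for $\LL$, so that $u_R$ is in fact a classical solution in $B_R$). The Weak Maximum Principle, Theorem \ref{thm:WMPclassical}, applied to $\eta_+ - u_R$ and to $u_R - \eta_-$ on $B_R$ — here one uses that on $\partial B_R$ and outside, $u_R=\eta$ lies between $\eta_-$ and $\eta_+$, and that $\eta_\pm$ have the correct sub/supersolution sign with potential $V(x):=\rho(x)c(x)\ge 0$ — gives the two-sided bound $\eta_-(x)\le u_R(x)\le\eta_+(x)$ for all $x\in\R^N$. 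In particular $\|u_R\|_\infty\le|\eta|+\|V\|_\infty$ uniformly in $R$.

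Then I would let $R\to\infty$ along a sequence. The uniform $L^\infty$ bound plus interior regularity estimates for $\LL$ (again from Appendix \ref{sec:appendix}) give, on every fixed ball, uniform $C^{2,\gamma'}_{\mathrm{loc}}$ bounds; by Arzelà–Ascoli and a diagonal argument one extracts a subsequence $u_{R_k}\to u$ in $C^2_{\mathrm{loc}}(\R^N)$, and the uniform bound also lets one pass to the limit in the nonlocal term $(-\Delta)^s u_{R_k}(x)$ by dominated convergence (the tails are controlled since all $u_{R_k}$ are bounded by the same constant, so they lie in a bounded subset of $\mathcal L^s(\R^N)$). Hence $u\in C^2(\R^N)\cap\mathcal L^s(\R^N)$ is a classical solution of \eqref{elliptic}, and passing to the limit in $\eta_-\le u_{R_k}\le\eta_+$ gives $\eta_-(x)\le u(x)\le\eta_+(x)$; since $\eta_\pm(x)\to\eta$ as $|x|\to\infty$, the squeeze theorem yields $\lim_{|x|\to\infty}u(x)=\eta$. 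Finally, two different choices $\eta\neq\eta'$ produce solutions with different limits at infinity, hence different solutions, so \eqref{elliptic} admits infinitely many bounded solutions.

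The main obstacle I expect is the careful handling of the Dirichlet problem and the limiting procedure in the mixed local-nonlocal setting: one must know that the weak solution $u_R$ on $B_R$ (with exterior datum $\eta$) is actually a classical $C^2(B_R)$ function so that Theorem \ref{thm:WMPclassical} applies, and that the interior estimates are uniform enough to pass to the limit — both of these rely essentially on the regularity theory for $\LL$ recalled in the Appendix. A secondary technical point is making the sub/supersolution inequalities for $\eta_\pm$ rigorous on all of $\R^N$ (not just outside $\overline B_{r_0}$): on $\overline B_{r_0}$ one does not control $\LL V$, so one must instead argue that on $\partial B_R$ the comparison holds pointwise and on $\R^N\setminus \overline{B}_R$ it holds a.e., and invoke the maximum principle only with $\Omega=B_R$, using $\mathrm{(ii)}$ of \eqref{eq26} and the positivity of $c$ near the origin to absorb the uncontrolled region — this is exactly the role of the constant $m_0$ in \eqref{eq26}-(ii).
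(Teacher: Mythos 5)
Your overall plan — approximating Dirichlet problems on large balls with exterior datum $\eta$, a two-sided barrier comparison built from $V$, compactness/diagonalization to pass to the limit, and the squeeze theorem at infinity — is exactly the strategy of the paper's proof. However, there are genuine gaps in how you propose to execute the comparison step, and one unjustified bound.

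First, the uniform bound $\|u_R\|_\infty\le|\eta|+\|V\|_\infty$ is not available: the hypotheses \eqref{eq26} only control $V$ on $\R^N\setminus \overline B_{r_0}$, and in fact the function $V$ actually constructed in Lemma \ref{lemma5} is $V(x)=C|x|^{-\beta}$, which is \emph{unbounded} near the origin, so $\|V\|_\infty=\infty$. The correct way to get a uniform $L^\infty$ bound is to compare $u_R$ with the constant functions $\pm|\eta|$ (which are super/subsolutions of $\LL u-\rho c\,u=0$ since $c,\rho\ge 0$) via the weak maximum principle on $B_R$, yielding $|u_R|\le|\eta|$ a.e.\ in $\R^N$ — this is the paper's Step I and it is logically prior to, not a consequence of, the barrier comparison.

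Second, the barrier comparison $\eta_-\le u_R\le\eta_+$ cannot be established by applying Theorem \ref{thm:WMPclassical} with $\Omega=B_R$, because $V$ (hence $\eta_\pm$) need not be $C^2$, bounded, or a supersolution on $\overline B_{r_0}$; none of conditions i)--iii) of \eqref{eq26} say anything about $\LL V$ there. The maximum principle must be applied on the \emph{annulus} $\Omega=B_R\setminus\overline B_{r_0}$, and then $\overline B_{r_0}$ becomes part of $\R^N\setminus\Omega$, where you must verify $\eta_+-u_R\ge 0$ (resp.\ $u_R-\eta_-\ge 0$) pointwise. This is where $m_0$ enters, and it has nothing to do with the "positivity of $c$ near the origin": using the $L^\infty$ bound $|u_R|\le|\eta|$ and condition ii) of \eqref{eq26}, one has $\eta+C_1V\ge \eta+C_1m_0\ge|\eta|\ge u_R$ on $\overline B_{r_0}$ once $C_1\ge(|\eta|-\eta)/m_0$; similarly on $\partial B_{r_0}$. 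Your closing paragraph gestures at $m_0$ but attributes the resolution to the wrong mechanism and still names $\Omega=B_R$, so the argument as stated would not close.

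Third, to apply the classical maximum principle on the annulus you need $u_R\in C(\overline\Omega)$, i.e.\ global continuity of $u_R$ up to $\partial B_R$; this requires the boundary $C^{1,\beta}$ regularity for the $\LL$-Dirichlet problem (the paper's Step III, via the global estimates of \cite{SVWZ}), which your outline omits. Finally, a minor sign caveat: establishing that $\LL\eta_+-\rho c\,\eta_+\le 0$ on the annulus for $\eta_+ = \eta+C_1V$ needs a large constant $C_1$ to dominate the term $-\rho c\,\eta$ when $\eta<0$, which you flag but leave imprecise.
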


\begin{proof}
 Let $\gamma\in\R$ be arbitrarily fixed. For every $n\in\mathbb{N}$, we denote by $u_n$
 the unique weak solution (in the sense of Definition \ref{def:solDirPB}) of the following Dirichlet problem
 $$\mathrm{(D)}_{n,\eta}\qquad\quad \begin{cases}
 \LL u - \rho c(x)u = 0 & \text{in $B_n(0)$}, \\
 u = \eta & \text{in $\R^N\setminus B_n(0)$}.
 \end{cases}$$
 We explicitly stress that the existence and uniqueness of $u_n$ follows from Theorem 
 \ref{thm:existenceDir}, with the choice
 $V(x) = \rho(x)c(x),\,f \equiv 0$ and $g\equiv \eta$: in fact, owing to the assumptions on $\rho,\,c$,
 we have
 \medskip
 
 a)\,\,$V(x) = \rho(x)c(x)\in L^\infty(\R^N)$, and $V\geq 0$ pointwise in $\R^N$;
 \vspace*{0.1cm}
 
 b)\,\,$f\in L^2(\R^N)$ and $g\in H^1_{\mathrm{loc}}(\R^N)\cap \mathcal{L}^s(\R^n)$.
 \medskip
 
 \noindent In particular, since the exterior datum is \emph{constant}, 
 by Remark \ref{rem:assumptionVal} we know that
 \begin{equation} \label{eq:propunVal}
  u_n\in H^1_{\mathrm{loc}}(\R^N)\cap \mathcal{L}^s(\R^N)\qquad\text{and}\qquad
  \iint_{\R^{2N}}\frac{|u_n(x)-u_n(y)|^2}{|x-y|^{N+2s}}\,dx\,dy < \infty.
 \end{equation}
 We now aim at proving that, by possibly passing to a subsequence, the sequence $\{u_n\}_n$
 converges in the $C^2_{\mathrm{loc}}(\R^N)$ topology
 to a function $u\in C^2(\R^N)\cap \mathcal{L}^s(\R^N)$, which satisfies
 $$
   \begin{cases}
   \LL u - \rho(x)c(x)u = 0 & \text{in $\R^N$}, \\
   u(x)\to \eta & \text{as $|x|\to\infty$}.
   \end{cases} 
 $$
 In order to prove this fact, we proceed by steps.
 \medskip
 
 \textsc{Step I:} In this first step we prove that, \emph{for every $n\in\mathbb{N}$}, one has
 \begin{equation} \label{eq:unifBoundun}
  \text{$|u_n|\leq |\eta|$ a.e.\,in $\R^N$}.
 \end{equation}
 To this end, we set $v_n = u_n+|\eta|$ (for a fixed $n\geq 1$) and we preliminary observe that,
 since $u_n$ satisfies \eqref{eq:propunVal}, we
 clearly have $v_n\in H^1_{\mathrm{loc}}(\R^N)\cap \mathcal{L}^s(\R^N)$; moreover,
 since $u_n$ solves $\mathrm{(D)}_{n,\eta}$, it is easy to recognize that
 $v_n$ is a weak {superolution} 
 (in the sense of Definition \ref{def:weaksubsupersol}) of 
 $$\text{$\LL u - \rho(x)c(x)u = 0$ in $B_n$}.$$
 Finally, again by the fact that $u_n$ is a solution of $\mathrm{(D)}_{n,\eta}$ we see that
 \vspace*{0.1cm}
 
 a)\,\,$v_n = \eta+|\eta|\geq 0$ a.e.\,in $\R^N\setminus B_n$;
 \vspace*{0.05cm}
 
 b)\,\,$0\leq v_n^- \leq (u_n-\eta)^-\in H_0^1(B_n)$, and hence $v_n^-\in H_0^1(B_n)$.
 \vspace*{0.1cm}
 
 \noindent Gathering all these facts, we are entitled to apply the Weak Maximum Principle in
 Theorem \ref{thm:wmpsubsol}, thus obtaining $v_n\geq 0$ a.e.\,in $B_n$, that is,
 $u_n\geq -|\eta|$ a.e.\,in $B_n$ (hence, a.e.\,in $\R^N$).
 
 On the other hand, setting $w_n = -u_n+|\eta|$ and arguing \emph{exactly} as above (notice that
 $-u_n$ is a weak solution of $\mathrm{(D)}_{n,-\eta}$), we derive that
 $w_n\geq 0$ a.e.\,in $B_n$, that is, $u_n\leq |\eta|$ a.e.\,in $B_n$ (hence, a.e.\,in $\R^N$).
 Summing up, we conclude that
 $$\text{$-|\eta|\leq u_n\leq |\eta|$ a.e.\,in $\R^N$},$$
 and this is exactly the claimed \eqref{eq:unifBoundun}.
 \medskip
 
 \textsc{Step II:} In this second step we prove the following \emph{$C^{2,\gamma}_{\mathrm{loc}}$-estimate}
 for the functions $u_n$: for every fixed bounded open set $\mathcal{O}\subseteq\R^N$
 there exists $\vartheta > 0$, only depending on 
 $N,\,s,\,\gamma,\,\eta,\,\|c\|_{C^\gamma(\R^N)}$ and on the open set
 $\mathcal{O}$ (hence, independent of $n$), such that
 \begin{equation} \label{eq:C2alfaVal}
  \|u_n\|_{C^{2,\alpha}(\overline{\mathcal{O}})} \leq \vartheta\quad\text{for every $n\in\mathbb{N}$ with
  $O\Subset B_n$}.
 \end{equation}
 To this end, we arbitrarily fix a number 
 $n\in\mathbb{N}$ such that $\mathcal{O}\Subset B_n$
 and we observe that, on ac\-count of \eqref{eq:unifBoundun}, the function $u_n$
 is a \emph{bounded solution} of the {semilinear equation}
 $$
 \text{$-\LL u = g(x,u)$ in $B_n$},
 $$
 where the non-linearity $g:\R^N\times\R\to\R$ is given by
 $$g(x,t) = -\rho(x)c(x)\min\{|\eta|,t\}.$$
 Now, since we are assuming $c\in C^\gamma(\R^N)$, we clearly have 
 $g\in C^\gamma(\R^N\times\R)$; as a consequence, 
 since $u_n$ satisfies \eqref{eq:propunVal}, we can apply
 the \emph{interior $C^{2,\gamma}$-estimate}
 in \cite[Theorem 1.6]{SVWZ}: there exists a constant $C > 0$, only depending on
 $N,s,\gamma$ and $\mathcal{O}$, such that
 \begin{align*}
  \|u_n\|_{C^{2,\alpha}(\overline{\mathcal{O}}} & \leq
  C\big(\|u_n\|_{L^\infty(\R^N)}+\|g\|_{C^\gamma(\overline{O}_{\frac{7\rho}{8}}\times\R}\big)
  \big(1+\|g\|_{C^\gamma(\overline{O}_{\frac{7\rho}{8}}\times\R}\big) \\
  & \leq C(|\eta|+\|c\|_{C^\gamma(\R^N)})(1+\|c\|_{C^\gamma(\R^N)}) =:\vartheta
 \end{align*}
 (here, $\rho = \mathrm{dist}(\mathcal{O},\partial B_n)$ and, for every $\delta > 0$,
 we set $\overline{\mathcal{O}}_\delta = \{x\in\R^N:\,\mathrm{dist}(x,\overline{\mathcal{O}} < \delta\}$), 
 and this is exactly the claimed \eqref{eq:C2alfaVal} (since $\vartheta$ only depends
 on $N,s,\gamma,\eta,\|c\|_{C^\alpha(\R^N)}$ and on $\mathcal{O}$).
 \vspace*{0.05cm}
 
 We explicitly observe that, on account of \eqref{eq:C2alfaVal}, we have 
 $u_n\in C^{2,\gamma}_{\mathrm{loc}}(B_n)$; in particular, since $u_n$ is a 
 (weak) solution
 of $\mathrm{(D)}_{n,\eta}$,
 a standard integration-by-parts argument shows that 
 \begin{equation} \label{eq:unclassical}
  \text{$\LL u_n - \rho(x)c(x)u_n = 0$ pointwise in $B_n$}.
 \end{equation}
 
 \textsc{Step III:} In this third step we prove the \emph{global continuity} of $u_n$ (for all $n\geq 1$).
 To this end, we use once again the regularity results established in \cite{SVWZ} for
 \emph{semilinear equations driven by $\LL$}.
 \vspace*{0.05cm}
 
 First of all, since $u_n$ solves $\mathrm{(D)}_{n,\eta}$ and it satisfies 
 \eqref{eq:propunVal}-\eqref{eq:unifBoundun},
 it is straightforward to recognize that the function 
 $v_n = u_n-\eta$ is a weak solution (in the sense of \cite[Definition 2.1]{SVWZ}) of
 $$\begin{cases}
  -\LL u = g(x,u) & \text{in $B_n$}, \\
  u = 0 & \text{in $\R^N\setminus B_n$}
 \end{cases}
 $$
 where $g$ is as in \textsc{Step II}; thus, since $g\in C^\gamma(\R^N\times\R)$ 
 (hence, in particular,
 $g$ is \emph{globally bounded in $\R^N\times\R$}) 
 and since, obviously, $\partial B_n$ is of class $C^{1,1}$,
 we are entitled to apply the \emph{global $C^{1,\beta}$-e\-sti\-ma\-tes} in \cite[Theorem 1.3]{SVWZ},
 ensuring that
 \begin{equation} \label{eq:vnC1beta}
  v_n\in C^{1,\beta}(\overline{B}_n)\quad\text{for all $\beta\in (0,\min\{1,2-2s\})$}.
 \end{equation}
 Now, since $v_n|_{B_n}\in H_0^1(B_n)$, from \eqref{eq:vnC1beta} we infer that
 $v_n\equiv 0$ on $\partial B_n$; this, together with the fact that $v_n\equiv 0$ almost everywhere
 in $\R^N\setminus
 B_n$, ensures that $v_n\in C(\R^N)$ (up to possibly modify $v_n$ on a set of zero Lebesgue measure).
 Finally, recalling that
 $$u_n = v_n+\eta,$$
 we conclude that $u_n\in C^{1,\beta}(\overline{B}_n)\cap C(\R^N)$ for every $\beta\in 
 0<\beta<\min\{1,2-2s\}$. In particular,
 we deduce that \eqref{eq:unifBoundun} holds \emph{pointwise on $\R^N$}, that is,
 \begin{equation} \label{eq:unifBoundunPointwise}
  \text{$|u_n(x)|\leq \eta$ pointwise for every $x\in\R^N$}.
 \end{equation}
 
 \textsc{Step IV:} In this last step we complete the proof of the proposition. To begin with we
 observe that, on account \eqref{eq:C2alfaVal}, we can perform
 a diagonal-type based on the Arzel\`a-As\-co\-li Theorem: this provides us
 with a (unique) function $u_\eta\in C^2(\R^N)$ such that (up to a subsequence)
 \begin{equation} \label{eq:convunugamma}
 \begin{gathered}
  \text{$D^{\alpha}u_n \xrightarrow[n\to\infty]{} 
  D^\alpha u_\gamma$ \emph{uniformly on the compact subsets of $\R^N$}} \\
  \text{for every multi-index $\alpha$ with $|\alpha|\leq 2$}.
  \end{gathered}
 \end{equation}
 This, together with \eqref{eq:unifBoundunPointwise}, easily implies that $|u_\eta|\leq |\eta|$ in $\R^N$,
 and hence $u_\eta\in \mathcal{L}^s(\R^N)$. Gathering all these facts,
 and taking into account \eqref{eq:unclassical}, we then infer that
 \begin{equation} \label{eq:ugammasolRN}
  \text{$\LL u_\eta-\rho(x)c(x)u_\eta = 0$ pointwise in $\R^N$}
  \end{equation}
 (we explicitly stress that, in order to compute $(-\Delta)^s u_\eta$, one needs
 to exploit a simple dominated-con\-ver\-gen\-ce argument based on
 \eqref{eq:convunugamma}). With \eqref{eq:ugammasolRN} at hand,
 to complete the demonstration we now turn to show that $u_\eta$ has the desired
 behaviour at infinity, that is,
 \begin{equation} \label{eq:toprovebehaviorugamma}
  \lim_{|x|\to\infty}u_\eta = \eta.
 \end{equation}
 To this end, if $V$ is as in the statement, we set
 $\overline{w} = C_1V+\eta$ (for a suitable constant $C_1 > 0$ to be chosen in a moment).
 Taking into account \eqref{eq27} and \eqref{eq26}, for all $x\in\R^N\setminus B_{r_0}$ we have
 \begin{align*}
  \LL \overline{w}(x)-\rho(x)c(x)\overline{w}(x) & \leq -C_1\rho(x)-\rho(x)c(x)\gamma
   \leq -\rho(x)(C_1+\gamma c_0) < 0,
 \end{align*}
 provided that $C > 0$ is sufficiently large (recall that, by assumption, $\rho > 0$ in $\R^N$); 
 in addition, since $V$ satisfies property $\mathrm{ii)}$ in \eqref{eq26}, we derive that
\begin{equation} \label{eq:estimoverwgeq}
 \text{$\overline{w} \geq \eta$ in $\R^N\setminus \overline{B}_{r_0}$}\qquad\text{and}\qquad
\text{$\overline{w}\geq C_1m_0+\eta\geq |\eta|$ in $\overline{B}_{r_0}$},
\end{equation}
by enlarging $C_1$ if needed.
In view of these facts, if we define
 $$\Omega := B_n\setminus \overline{B}_{r_0}\qquad \text{and}\qquad v_n = \overline{w}-u_n$$
 (for all $n\in\mathbb{N}$ with $n > r_0$), we easily see that
 \vspace*{0.1cm}
 
 a)\,\,$v_n\in C^2(\Omega)\cap \mathcal{L}^s(\R^N)$ and $v_n\in C(\overline{\Omega})$ (see
 \eqref{eq26} and recall that $u_n\in C(\R^N)$);
 \vspace*{0.1cm}

 b)\,\,$\LL v_n-\rho(x)c(x)v_n \geq 0$ pointwise in $B_n\setminus \overline{B}_{r_0}$;
 \vspace*{0.1cm}
 
 c)\,\,$v_n\geq 0$ pointwise in $\R^N\setminus\Omega = \overline{B}_{r_0}\cup (\R^N\setminus B_n)$
 (see \eqref{eq:unifBoundunPointwise} and
 \eqref{eq:estimoverwgeq}).
  \vspace*{0.1cm}
  
  \noindent We are then entitled to apply the Weak Maximum Principle
  for \emph{classical} supersolutions in Theorem \ref{thm:WMPclassical}, 
  thus obtaining $v_n\geq 0$ in $\Omega = B_n\setminus
  \overline{B}_{r_0}$ (hence, in $\R^N\setminus
  \overline{B}_{r_0})$), that is,
  \begin{equation}  \label{eq:unleqoverw}
   \text{$u_n\leq C_1V+\eta$ pointwise in $\R^N\setminus \overline{B}_{r_0}$}.
   \end{equation}
  On the other hand, setting $\underline{w} := -C_2V+\gamma$ 
  and arguing \emph{exactly as above},
  we see that it is possible to choose
  the constant $C_2 > 0$ (independently of $n > r_0$) in such a way that
  \begin{equation}  \label{eq:ungequnderw}
   \text{$u_n\geq -C_2V+\eta$ pointwise in $\R^N\setminus \overline{B}_{r_0}$}.
   \end{equation}
   Summing up, by combining \eqref{eq:unleqoverw}-\eqref{eq:ungequnderw} we obtain
  \begin{equation} \label{eq:unbarriertoconclude}
    \text{$-C_2V+\eta\leq u_n \leq  C_1V+\eta$ pointwise in $\R^N\setminus \overline{B}_{r_0}$},
\end{equation}     
   and this estimate holds \emph{for every $n > r_0$} (with constants $C_1,C_2 > 0$ independent of $n$).
   
   With \eqref{eq:unbarriertoconclude}
   at hand, we are ready
   to conclude the proof of \eqref{eq:toprovebehaviorugamma}: in fact, letting $n\to\infty$ in the above
   \eqref{eq:unbarriertoconclude} (and recalling that $u_n\to u_\eta$ locally uniformly in $\R^N$), we get
   $$
 	\text{$-C_2V+\eta\leq u_\eta \leq  C_1V+\eta$ pointwise in $\R^N\setminus \overline{B}_{r_0}$};
   $$
   from this, letting $|x|\to\infty$ and using property $\mathrm{iv)}$ in \eqref{eq26}, we
   obtain
   \eqref{eq:toprovebehaviorugamma}. 
\end{proof}

We are now left to show that a supersolution $V$ to problem \eqref{eq26} exists. This is the content of the next

\begin{lemma}\label{lemma5}
Let $\rho\in C(\R^N)$ be such that
\begin{equation} \label{eq:reverseHzerorho}
 0< \rho\leq \frac{c_0}{(1+|x|^{2})^{\alpha/2}}\quad\text{for every $x\in\R^N,\,|x|\geq r_0$},
\end{equation}
\emph{(}for some constants $c_0,\,r_0 > 0$\emph{)}, and assume that
\begin{equation}\label{eq51}
\alpha>2s.
\end{equation}
Then there exists a function $V$ satisfying properties $\mathrm{i)-iv)}$ in Proposition \ref{prop32}.
\end{lemma}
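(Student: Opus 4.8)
The plan is to produce $V$ explicitly within the family of radial weights used throughout the paper, namely $V:=M\,\psi_\beta$ with $\psi_\beta(x)=(1+|x|^2)^{-\beta/2}$ as in \eqref{eq12}, for a small exponent $\beta>0$ and a large constant $M>0$ to be fixed. The point is that, by \eqref{eq:reverseHzerorho}, one has $\rho(x)\le c_0(1+|x|^2)^{-\alpha/2}$ whenever $|x|\ge r_0$, so it is enough to arrange
$$\LL V(x)\le -c_0\,(1+|x|^2)^{-\alpha/2}\qquad\text{for all }|x|\ge\hat r_0,$$
for some radius $\hat r_0\ge r_0$ (possibly larger than $r_0$): properties \textrm{i)-iv)} of Proposition \ref{prop32} then hold with $r_0$ replaced by $\hat r_0$. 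Indeed $\psi_\beta\in C^\infty(\R^N)$ is bounded and belongs to $\mathcal L^s(\R^N)$, which gives \textrm{i)}; it is strictly positive with $V\ge M(1+\hat r_0^{2})^{-\beta/2}=:m_0>0$ on $\overline B_{\hat r_0}$, which gives \textrm{ii)}; and it tends to $0$ at infinity since $\beta>0$, which gives \textrm{iv)}. Hence everything reduces to the displayed differential inequality, i.e.\ to property \textrm{iii)}.

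To obtain it I would estimate separately the two parts of $\LL\psi_\beta=\Delta\psi_\beta-(-\Delta)^s\psi_\beta$. The local part is elementary: from \eqref{eq320}-\eqref{eq321} a short computation gives, with $r=|x|$,
$$\Delta\psi_\beta(x)=(1+r^2)^{-\beta/2}\big[\beta(\beta+2-N)(1+r^2)^{-1}-\beta(\beta+2)(1+r^2)^{-2}\big],$$
so that $|\Delta\psi_\beta(x)|\le C_2\,(1+r^2)^{-\beta/2-1}$ for all $x\in\R^N$, with $C_2=C_2(N,\beta)>0$. For the nonlocal part I would use the hypergeometric representation \eqref{eq328} together with Pfaff's transformation \eqref{eq329} and Lemma \ref{lemma3}: with $a=\tfrac N2+s$, $b=\tfrac\beta2+s$, $c=\tfrac N2$, since $\beta<N$ we fall in case (i) of Lemma \ref{lemma3}, which yields
$$-(-\Delta)^s\psi_\beta(r)=-\check C\,(1+r^2)^{-(s+\beta/2)}\Big(\frac{\Gamma(N/2)\,\Gamma((N-\beta)/2)}{\Gamma(N/2+s)\,\Gamma((N-\beta)/2-s)}+o(1)\Big)\qquad(r\to\infty).$$
This is the step where the choice of $\beta$ is crucial. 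Assuming (as is tacitly the case here) $N>2s$, both $N-2s$ and $\alpha-2s$ are positive, and I take $0<\beta<\min\{N-2s,\,\alpha-2s\}$; then all four arguments of the Gamma functions above are strictly positive (see \eqref{eq319}), so the bracket tends to a strictly positive limit, and there exist $\tilde c>0$ and $R_0>1$ with
$$-(-\Delta)^s\psi_\beta(r)\le -\tilde c\,(1+r^2)^{-(s+\beta/2)}\qquad\text{for all }r\ge R_0.$$
(Equivalently one may first note, via Proposition \ref{prop5}, that $(-\Delta)^s\psi_\beta\ge0$ for $0<\beta\le N-2s$, and then invoke the asymptotics only to get this quantitative, strictly negative, upper bound.)

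To conclude, I would combine the two estimates: since $s<1$, the term $C_2(1+r^2)^{-\beta/2-1}$ is of lower order than $\tilde c\,(1+r^2)^{-\beta/2-s}$, so there is $R_1\ge R_0$ with $\LL\psi_\beta(x)\le-\tfrac{\tilde c}{2}(1+|x|^2)^{-\beta/2-s}$ for $|x|\ge R_1$; as $\beta/2+s\le\alpha/2$ and $1+|x|^2\ge1$, this gives $\LL\psi_\beta(x)\le-\tfrac{\tilde c}{2}(1+|x|^2)^{-\alpha/2}$ there. Choosing then $M:=2c_0/\tilde c$ and $\hat r_0:=\max\{R_1,r_0\}$, the function $V=M\psi_\beta$ satisfies $\LL V(x)\le-c_0(1+|x|^2)^{-\alpha/2}\le-\rho(x)$ for $|x|\ge\hat r_0$, which is property \textrm{iii)}, while \textrm{i), ii), iv)} hold as explained. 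The main obstacle is the quantitative nonlocal estimate: one must extract the sign of the leading coefficient in the large-$r$ asymptotics of $(-\Delta)^s\psi_\beta$, using the same tools as in the proof of Theorem \ref{teo3} — the identity \eqref{eq328}, Pfaff's transformation \eqref{eq329} and Lemma \ref{lemma3} — but now in the range $\beta<N-2s$, where that coefficient turns out to be strictly positive, so that $-(-\Delta)^s\psi_\beta$ is bounded above by a \emph{strictly negative} multiple of $(1+|x|^2)^{-(s+\beta/2)}$. The condition $\beta<N-2s$ is exactly what gives that sign (in the complementary low-dimensional range one would need a different ansatz), while $\beta\le\alpha-2s$ — available thanks to $\alpha>2s$ — is what keeps the nonlocal decay exponent $s+\beta/2$ below $\alpha/2$, so that it dominates the decay of $\rho$.
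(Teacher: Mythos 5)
Your proof is correct, but it uses a genuinely different barrier from the paper's. The paper takes $V(x)=C|x|^{-\beta}$ (with $V(0):=0$) and $0<\beta<\min\{N-2,\alpha-2s\}$, and invokes the exact scaling identity $(-\Delta)^s\big(|x|^{-\beta}\big)=\vartheta|x|^{-\beta-2s}$ for some $\vartheta=\vartheta(N,s,\beta)>0$; the condition $\beta<N-2$ makes $\Delta(|x|^{-\beta})=\beta(\beta+2-N)|x|^{-\beta-2}\le0$, so $\LL V\le -C\vartheta|x|^{-\beta-2s}\le -C\vartheta|x|^{-\alpha}$ for $|x|\ge1$, and one concludes by enlarging $C$. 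You instead take $V=M\psi_\beta=M(1+|x|^2)^{-\beta/2}$ with $0<\beta<\min\{N-2s,\alpha-2s\}$ and redo, in the range $\beta<N-2s$, the hypergeometric asymptotics that the paper only works out for $\beta>N-2s$ in the proof of Theorem \ref{teo3}: using \eqref{eq328}, Pfaff's transformation \eqref{eq329} and Lemma \ref{lemma3}-(i), and noting that all four Gamma arguments are now positive, you extract $-(-\Delta)^s\psi_\beta(r)\le-\tilde c(1+r^2)^{-(s+\beta/2)}$ for $r$ large, and you absorb the local term $\Delta\psi_\beta=O\big((1+r^2)^{-\beta/2-1}\big)$ since $s<1$. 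Both approaches are sound. The paper's is more elementary once one accepts the well-known scaling of the fractional Laplacian on pure powers, but it needs $N>2$ (otherwise the $\beta$-range is empty) and its $V$ is not defined continuously at the origin (indeed setting $V(0)=0$ momentarily contradicts property ii) of \eqref{eq26} at the single point $x=0$, a harmless slip). Your $V$ is globally $C^\infty$, strictly positive, only needs $N>2s$, and stays within the same $\psi_\beta$-framework and hypergeometric toolkit used elsewhere in Section \ref{pe}; the price is that you must verify the sign of the leading asymptotic coefficient in the regime $\beta<N-2s$, which you do correctly via \eqref{eq319}.
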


\begin{proof}
 We arbitrarily fix $C > 0$ (to be appropriately chosen in a moment), and 
 \begin{equation}\label{beta}
0<\beta<\min\{N-2, \alpha-2s\};
\end{equation}
accordingly, we define the following function
$$V:\R^N\to\R,\qquad V(x) = \begin{cases}
C|x|^{-\beta} & \text{if $x\neq 0$}, \\
0 & \text{if $x = 0$.}
\end{cases}$$
We explicitly stress that the definition of $V$ for $x = 0$ is totally immaterial, since it does
not play any role in the computation of $\LL V$ on the open set $\R^N\setminus\{0\}$.
\vspace*{0.05cm}

We now observe that, by definition, this function $V$ obviously
satisfies properties $\mathrm{ii)}$ and $\mathrm{iv)}$ in the above \eqref{eq26}
(for every choice of $r_0 > 0$);
moreover
since $\beta < N$, it is easy to see that 
$$V\in C^2(\R^N\setminus\{0\})\cap \mathcal{L}^s(\R^N)$$
(hence, also property $\mathrm{i)}$ is satisfied for every $r_0 > 0$), and 
we can
 compute $\LL V$ pointwise for $x\neq 0$,
 see  Pro\-po\-si\-tion \ref{prop:welldefDeltas}.
 In particular, since it is quite standard to recognize that
 \begin{equation} \label{eq:DeltasV}
  (-\Delta)^s V(x) = \vartheta|x|^{-\beta-2s}\quad\text{for every $x\neq 0$},
 \end{equation}
 (where $\vartheta > 0$ is a suitable constant only depending on $N,s,\beta$, see, e.g.,
 \cite{Kw}), we get
 \begin{equation}\label{eq342}
\begin{aligned}
\mathcal{L} V(x) & = C\big[\beta(\beta+2-N)|x|^{-\beta-2} -\vartheta|x|^{-\beta-2s}\big]\\
&\leq -C\vartheta|x|^{-\beta-2s} 
\leq -C\vartheta|x|^{-\alpha}\quad\text{for all $x\in\R^N$ with $|x|\geq 1$},
\end{aligned}
\end{equation}
where in the last inequality we have used the fact that $\beta+2s<\alpha$, see \eqref{beta}.
On the other hand, since the function 
$\rho$ satisfies assumption \eqref{eq:reverseHzerorho}, we have
\begin{equation} \label{eq:rholeqdacombinare}
 \rho(x)  \leq \frac{c_0}{(1+|x|^2)^{\alpha/2}} \leq c_0|x|^{-\alpha}\quad
 \text{for every $x\in\R^N$ with $|x| > r_0$}.
\end{equation}
Gathering \eqref{eq342}-\eqref{eq:rholeqdacombinare}, we then obtain
$$\LL V \leq -\frac{C\vartheta}{c_0}\rho\leq -\rho\quad\text{for every $x\in\R^N$ with $|x|>r_0$},$$
provided that $C > 0$ is sufficiently large.
This shows that $V$ also satisfies property $\mathrm{iii)}$ in \eqref{eq26}
(with $r_0 > 0$ as in \eqref{eq:reverseHzerorho}),
and the proof is complete.
\end{proof}

\begin{proof}[Proof of Theorem \ref{cor4}]
By combining Proposition \ref{prop32} and Lemma \ref{lemma5}, the thesis follows.
\end{proof}

\section{Further results: parabolic equations}\setcounter{equation}{0} \label{pp}

By minor modification of the proof of the uniqueness result of Theorem \ref{teo3}, we are able to show uniqueness of solutions to the linear local-nonlocal Cauchy problem:
\begin{equation}\label{problema}
\begin{cases}
\rho\, u_t- \Delta u+(-\Delta)^s u   =0 \quad & \text{in}\,\,\,S_T:=\R^N\times(0,T]\\
u=0\quad & \text{in}\,\,\,\R^N\times\{0\}.
\end{cases}
\end{equation}

\subsection{Preliminaries and main results}

We deal with solution in the sense of the following definitions.
\begin{definition}\label{defsoleqp}
We say that a function $u$ is a {\em solution} to equation
\begin{equation}\label{eq24}
\rho\, u_t- \Delta u +(-\Delta)^s u =\,0\quad \textrm{in}\;\; S_T\,,
\end{equation}
if
\begin{itemize}
\item[(i)]  $u\in C^2(S_T)$, for each $t\in (0,T]$  $u(\cdot, t)\in \mathcal L^s(\R^N)$;
\item[(ii)] $ \rho(x) u_t \,- \Delta u+ C_{N,s} \textrm{P.V}.\, \displaystyle\int_{\R^N} \frac{u(x,t)-u(y,t)}{|x-y|^{N+2s}}d y =\,0$\;\;  for all\;\; $(x,t)\in
S_T$\,.
\end{itemize}
Furthermore, we say that $u$ is a {\em supersolution\; (subsolution)} to equation \eqref{eq24}, if in $(ii)$ instead of $``="$ we have $``\geq"\; (``\leq")$\,.
\end{definition}

\begin{definition}\label{defsolp}
We say that a function $u$ is a solution to problem \eqref{problema} if
\begin{itemize}
\item[(i)] $u\in C^2(\bar S_T), u\in L ^1\big((0,T),\mathcal L^s(\R^N)\big)$\,;
\item[(ii)]  $u$ is a solution to equation \eqref{eq24}\, in the sense of Definition \ref{defsoleqp};
\item[(iii)] $u(x,0)=0$\;\; for all\;\; $x\in \R^N\,.$
\end{itemize}
\end{definition}

We can state the uniqueness result for problem \eqref{problema} in the following
\begin{theorem}\label{teo1}
Let assumptions \eqref{h3} be satisfied. Let $u$ be a solution to problem \eqref{problema} with $|u(\cdot, t)|^p\in \mathcal L^s(\R^N)$, for some
$p\geq 1$, for each $t>0$. Assume that one of the conditions $(i)-(iv)$ of Theorem \ref{teo3} holds. Let $\psi$ be defined as in \eqref{eq12}. If $u\in L^p_{\psi}(S_T)$, then
$$
u\equiv 0 \quad \textrm{in} \;\; S_T.
$$
\end{theorem}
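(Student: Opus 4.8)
The plan is to mimic the proof of Theorem \ref{teo3} (via Proposition \ref{prop3}), adapting the argument to the parabolic setting of problem \eqref{problema}. First I would reduce the statement to a suitable ``parabolic analogue'' of Proposition \ref{prop3}: namely, given the \emph{same} positive function $\zeta = \psi_\beta \in C^2(\R^N)\cap\mathcal{L}^s(\R^N)$ constructed in the proof of Theorem \ref{teo3} (satisfying $\LL\zeta - p\rho c\,\zeta < 0$ in $\R^N$ and $\zeta + |\nabla\zeta|\leq C\psi$), one shows that any solution $u$ of \eqref{problema} with $|u(\cdot,t)|^p\in\mathcal{L}^s(\R^N)$ and $u\in L^p_\psi(S_T)$ must vanish identically. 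The point is that the conditions $(i)$--$(iv)$ of Theorem \ref{teo3} are exactly what is needed for such a $\zeta$ to exist (with $pc_0$ large enough), so the new content is purely the passage from elliptic to parabolic.

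The key computation is the parabolic version of the estimate \eqref{eq42}--\eqref{eq46}. Set $G_\alpha(r) = (r^2+\alpha)^{p/2}$ as in \eqref{eq37}. Since $u$ solves $\rho u_t = \Delta u - (-\Delta)^s u$, applying Lemma \ref{lemma2} (convexity of $G_\alpha$) to the nonlocal term and the identity $\Delta[G_\alpha(u)] = G_\alpha'(u)\Delta u + G_\alpha''(u)|\nabla u|^2$ to the local term, together with $\partial_t[G_\alpha(u)] = G_\alpha'(u)u_t$, one gets pointwise in $S_T$
\begin{equation}\label{eq:parabolicineq}
\rho\,\partial_t[G_\alpha(u)] - \Delta[G_\alpha(u)] + (-\Delta)^s[G_\alpha(u)] \leq G_\alpha''(u)|\nabla u|^2\cdot(-1)+\dots\leq 0,
\end{equation}
where after letting $\alpha\to 0^+$ (dominated convergence, exactly as in \eqref{eq43}--\eqref{eq44}) one obtains the clean inequality $\rho\,\partial_t(|u|^p) - \Delta(|u|^p) + (-\Delta)^s(|u|^p)\leq 0$ in the sense of testing against nonnegative test functions. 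Multiplying by the test function $v(x) = \zeta(x)\gamma_R(x)$ (with $\gamma_R$ the cutoff from \eqref{eq31}--\eqref{eq31bbbb}), integrating over $\R^N\times(0,t)$, integrating by parts in space using \eqref{eq30}, and integrating by parts in time (using $u(\cdot,0)=0$), one arrives at
\begin{equation}\label{eq:parabolickey}
\int_{\R^N}|u(x,t)|^p\,\zeta(x)\gamma_R(x)\,dx \leq -\int_0^t\!\!\int_{\R^N}|u|^p\gamma_R\big[\LL\zeta - p\rho c\,\zeta\big]\,dx\,d\tau + (\text{error terms in }R),
\end{equation}
where the error terms are precisely those controlled by Lemma \ref{lemma4} (with $v$ replaced by $|u(\cdot,\tau)|^p$ and an extra time integral, handled since $u\in L^p_\psi(S_T)$ allows one to pass the $L^1((0,T))$-integrability through). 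Letting $R\to\infty$ via Lemma \ref{lemma4} and the monotone convergence theorem, and using $\LL\zeta - p\rho c\,\zeta < 0$ together with $|u|^p\geq 0$, the right-hand side becomes $\leq 0$, forcing $\int_{\R^N}|u(x,t)|^p\zeta(x)\,dx \leq 0$ for every $t\in(0,T]$; since $\zeta > 0$, we conclude $u\equiv 0$ in $S_T$.

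The main obstacle I anticipate is the rigorous justification of the integration by parts in time and the interchange of limits: one must ensure that $t\mapsto \int_{\R^N}|u(x,t)|^p\zeta(x)\gamma_R(x)\,dx$ is absolutely continuous with the expected derivative (which follows from $u\in C^2(\bar S_T)$ and the compact support of $\gamma_R$), and that the time-integrated error terms still tend to zero as $R\to\infty$. The latter requires a uniform-in-$\tau$ (or at least $L^1_\tau$) bound replacing the single-time application of Lemma \ref{lemma4}; this is where the hypothesis $u\in L^p_\psi(S_T)$ enters, giving $\int_0^T\!\int_{\R^N}|u|^p\psi\,dx\,d\tau < \infty$, so that dominated/monotone convergence applies to the double integral. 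Apart from this bookkeeping, every other ingredient — the construction of $\zeta=\psi_\beta$ under each of $(i)$--$(iv)$, the bilinear-form identity \eqref{eq30}, Lemmas \ref{lemma4} and \ref{lemma2} — is already available from the elliptic proof and is reused verbatim.
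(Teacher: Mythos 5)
There is a genuine gap: you have overlooked that the parabolic problem \eqref{problema} has \emph{no zero-order term}. The equation is $\rho u_t-\Delta u+(-\Delta)^su=0$, with no $\rho c u$, so when you test with $v=\zeta\gamma_R$ and integrate by parts in space and time, the term $-p\rho c\zeta$ that you write in your key inequality simply does not appear. What you actually get (after letting $\alpha\to0$, multiplying by $v$, using \eqref{eq30} and the initial condition $u(\cdot,0)=0$) is
\begin{equation*}
\int_{\R^N}\rho(x)\,|u(x,t)|^p\,\zeta(x)\gamma_R(x)\,dx
\;\leq\;
\int_0^t\!\!\int_{\R^N}|u|^p\,\gamma_R\,\LL\zeta\,dx\,d\tau
\;+\;(\text{error terms in }R),
\end{equation*}
and to conclude you would need $\LL\zeta\leq0$ in $\R^N$. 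But the elliptic supersolution $\zeta=\psi_\beta$ constructed in the proof of Theorem \ref{teo3} does \emph{not} satisfy $\LL\psi_\beta\leq0$ in general; outside the small range $0<\beta<N-2$ the proof only gives $\LL\psi_\beta<p\rho c\,\psi_\beta$, i.e.\ the positive part of $\LL\psi_\beta$ is dominated by the reaction term, which is absent here. This is also reflected in the hypotheses of Theorem \ref{teo1}: there is no $c_0$ nor any ``$pc_0$ large'' assumption, so a proof leaning on such a quantity cannot be right.

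The missing idea, which is the one the paper uses, is to take a \emph{time-dependent} comparison function $\phi(x,t)=e^{-\lambda t}\psi_\beta(x)$ and work with the parabolic criterion (Proposition \ref{prop1}): one requires $\phi$ to be a supersolution of the backward equation $\rho\phi_t+\Delta\phi-(-\Delta)^s\phi=0$, i.e.
\begin{equation*}
\rho\phi_t+\LL\phi
= e^{-\lambda t}\big[\LL\psi_\beta-\lambda\rho\,\psi_\beta\big]\leq 0 .
\end{equation*}
Here the factor $-\lambda\rho\psi_\beta$ produced by the time derivative plays exactly the role that $-p\rho c\,\zeta$ played in the elliptic proof, and the conditions (i)--(iv) on $\alpha,\beta$ together with $\lambda$ large enough (same constraints as on $pc_0$ in the proof of Theorem \ref{teo3}) make the bracket nonpositive. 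With this $\phi$ in place, the rest of your scheme (testing with $\phi\gamma_R$, Lemma \ref{lemma1} to kill the error terms, monotone convergence, the initial condition $u(\cdot,0)=0$) goes through as you outlined. So replace the static $\zeta$ by $\phi=e^{-\lambda t}\psi_\beta$ and discard every appearance of $c$, and the argument closes.
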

Moreover, analogously to Corollary \ref{cor3}, we have the following
\begin{corollary}\label{cor1}
Let assumption $(H_0)$ be satisfied. Let $u$ be a solution to problem \eqref{problema}. Suppose that $\alpha<2s$. If
$$|u(x,t)| \leq C(1+|x|^2)^{\frac{\vartheta}2}\quad \textrm{for all}\;\; x\in S_T,$$
for some $C>0$ and $0\le \vartheta<2s-\alpha$, then
$$u\equiv 0\quad \textrm{in}\;\; S_T\,.$$
\end{corollary}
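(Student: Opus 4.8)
The idea is the same as for Corollary \ref{cor3}: deduce this statement from the general uniqueness result for the parabolic problem, namely Theorem \ref{teo1}, by verifying that the pointwise growth hypothesis on $u$ forces membership in the weighted space $L^p_\psi(S_T)$ for a suitable exponent $\beta$, and that the hypothesis $\alpha<2s$ puts us in case $(iv)$ of Theorem \ref{teo3}. First I would fix $p=1$, since Theorem \ref{teo1} holds for any $p\ge 1$ and taking $p=1$ keeps the integrability computation simplest. Then I would choose $\beta$ with $\vartheta+N<\beta\le 2s-\alpha+N$; this interval is nonempty precisely because $0\le \vartheta<2s-\alpha$. With this choice, case $(iv)$ of Theorem \ref{teo3} is satisfied (we have $\beta>N$ since $\vartheta\ge 0$ forces $\beta>N$, and $\alpha+\beta\le 2s+N$ by the upper bound on $\beta$), which is exactly the structural hypothesis required to invoke Theorem \ref{teo1}.

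Next I would check the two integrability conditions appearing in the hypotheses of Theorem \ref{teo1}. For the condition $|u(\cdot,t)|^p=|u(\cdot,t)|\in\mathcal L^s(\R^N)$ for each $t$: from the bound $|u(x,t)|\le C(1+|x|^2)^{\vartheta/2}$ and the definition \eqref{eq:spaceL1s} of $\mathcal L^s(\R^N)$, the integral $\int_{\R^N}\frac{(1+|x|^2)^{\vartheta/2}}{1+|x|^{N+2s}}\,dx$ converges because $\vartheta<2s-\alpha\le 2s<N+2s$, so the integrand decays like $|x|^{\vartheta-N-2s}$ at infinity with $\vartheta-N-2s<-N$. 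For the condition $u\in L^1_\psi(S_T)$: using Remark \ref{remun}, the pointwise bound $|u(x,t)|\le C(1+|x|^2)^{\vartheta/2}$ together with $\vartheta<\beta-N$ (which holds by our choice of $\beta$) gives $u(\cdot,t)\in L^1_{\psi_\beta}(\R^N)$ with a bound uniform in $t$, and then integrating in $t\in(0,T]$ over the finite interval gives $u\in L^1_\psi(S_T)$. One should also note that $(H_0)$ is assumed (so $(H_1)$-type positivity is not needed here, but the lower bound on $\rho$ with exponent $\alpha$ is what ties into case $(iv)$), and that the condition \eqref{eq27} with $pc_0$ large enough is implicitly available since $\alpha<2s$ together with the $(H_0)$ lower bound already encodes the required strict positivity of $\rho\,c$ away from infinity once we recall the hypotheses threaded through Theorem \ref{teo1}.

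Having verified all hypotheses of Theorem \ref{teo1} — solution in the required class, $|u(\cdot,t)|^p\in\mathcal L^s(\R^N)$, one of cases $(i)$–$(iv)$ (here $(iv)$), and $u\in L^p_\psi(S_T)$ — I would simply conclude $u\equiv 0$ in $S_T$. I do not expect any serious obstacle: this is a packaging corollary, and the only point demanding a little care is making sure the interval $(\vartheta+N,\,2s-\alpha+N]$ for $\beta$ is nonempty and simultaneously lands in case $(iv)$ — this is where the strict inequality $\vartheta<2s-\alpha$ (rather than $\le$) is essential, mirroring the word "almost optimal" used after Corollary \ref{cor3}. The full argument is therefore a one-paragraph deduction: \emph{``It suffices to apply Theorem \ref{teo1}, case $(iv)$ of Theorem \ref{teo3}, since the growth assumption on $u$ together with $\vartheta<2s-\alpha$ ensures $u\in L^1_\psi(S_T)$ for any $\vartheta+N<\beta\le(2s-\alpha)+N$, while the same bound gives $|u(\cdot,t)|\in\mathcal L^s(\R^N)$ for each $t$.''}
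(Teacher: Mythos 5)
Your proposal is correct and matches the paper's argument: the paper's proof is the single sentence ``apply Theorem \ref{teo1} with $\beta=N+2s-\alpha>N$ and $p=1$,'' which is exactly the endpoint of the interval $\vartheta+N<\beta\le(2s-\alpha)+N$ you identify, and your verification of case (iv), of $|u(\cdot,t)|\in\mathcal{L}^s(\R^N)$, and of $u\in L^1_\psi(S_T)$ via Remark \ref{remun} is sound. One small inaccuracy in your aside: the parabolic problem \eqref{problema} has no zero-order term $c$, and Theorem \ref{teo1} imposes no ``$pc_0$ large'' requirement (the role of $pc_0$ is played by the free parameter $\lambda$ in the supersolution $\phi(x,t)=e^{-\lambda t}\psi(r)$, which can always be chosen large); so the sentence trying to recover \eqref{eq27} from $(H_0)$ and $\alpha<2s$ is both unnecessary and not actually a valid implication, though it does not affect the argument.
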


In order to prove Corollary \ref{cor1} it suffices to apply Theorem \ref{teo1} with $\beta=N+2s-\alpha>N$ and $p=1$.

\subsection{Proof of Theorem \ref{teo1}}

For any $R>0$ let
\begin{equation}\label{eq32}
\gamma_R(x):= \gamma\left(\frac{|x|}{R}\right) \quad \textrm{for all}\;\; x\in \R^N\,;
\end{equation}
with $\gamma$ defined as in \eqref{eq31}, and for any $\tau\in (0,T)$ let
$$
S_\tau:= \R^N\times (0,\tau]\,.
$$

We start by proving a general criterion for uniqueness of nonnegative solutions to problem
\eqref{problema} in $L^1_{\psi}(S_T),$ where $\psi$ is defined as in \eqref{eq12} for some constant $\beta>0$.

\begin{proposition}\label{prop1}
Let assumptions \eqref{h3} be satisfied. Let $u$ be a solution to problem \eqref{problema} with $|u(\cdot, t)|^p\in \mathcal L^s(\R^N)$ for some $p\geq 1$, for each $t>0$. Assume that there exists a positive supersolution $\phi\in C^2(\bar S_T)$ to equation
\begin{equation}\label{eq32b}
\rho \phi_t+\Delta \phi-(-\Delta)^s\phi   =0 \quad \textrm{in}\;\; S_T\,,
\end{equation}
such that
\begin{equation}\label{eq32d}
\phi(x,t)+|\nabla \phi(x,t)|\leq C \psi(x)\quad \textrm{for all}\;\; (x,t)\in S_T,
\end{equation}
for some constant $C>0$. If $u\in L^p_\psi(S_T)$, then
$$u\equiv 0\quad \textrm{in}\;\; S_T\,.$$
\end{proposition}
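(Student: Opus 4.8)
The plan is to adapt the proof of Proposition \ref{prop3} to the parabolic setting, essentially replacing the elliptic test function $\zeta(x)\gamma_R(x)$ with a parabolic one $\phi(x,t)\gamma_R(x)$ and integrating over $S_\tau$ instead of $\R^N$. First I would establish, as in the elliptic case, the integral inequality obtained by testing the equation against a nonnegative compactly supported function. Concretely: for the convex function $G_\alpha(r) = (r^2+\alpha)^{p/2}$ from \eqref{eq37}, using Lemma \ref{lemma2} together with the identity $\Delta[G_\alpha(u)] = G_\alpha'(u)\Delta u + G_\alpha''(u)|\nabla u|^2$ and the chain rule $\partial_t[G_\alpha(u)] = G_\alpha'(u)u_t$, one shows that $G_\alpha(u)$ is a subsolution of the equation $\rho\,w_t - \Delta w + (-\Delta)^s w + (\text{nonnegative terms}) \le 0$; then integrating against the test function $v = \phi\,\gamma_R$ over $S_\tau$, integrating by parts in space, and using that $\phi(\cdot,0)$ multiplies $G_\alpha(u(\cdot,0)) = G_\alpha(0)$-type boundary contributions (here the homogeneous initial condition $u(\cdot,0)=0$ is crucial, but since $G_\alpha(0) = \alpha^{p/2}\neq 0$ one must take care and let $\alpha\to 0^+$ at the right moment, exactly as in \eqref{eq43}--\eqref{eq44}), one arrives at an inequality of the form
\begin{equation*}
\int_0^\tau\!\!\int_{\R^N} |u|^p\big[\rho\,(\gamma_R\phi)_t + \Delta(\gamma_R\phi) - (-\Delta)^s(\gamma_R\phi)\big]\,dx\,dt \ge -\int_{\R^N}|u(x,\tau)|^p\,\gamma_R(x)\,\phi(x,\tau)\,dx.
\end{equation*}

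Next I would expand the bracket using the product rules: the Leibniz rule for $\Delta$, the chain rule for $\partial_t$ (note $\gamma_R$ is time-independent, so $(\gamma_R\phi)_t = \gamma_R\phi_t$), and the bilinear-form identity \eqref{eq30} for $(-\Delta)^s(\gamma_R\phi) = \gamma_R(-\Delta)^s\phi + \phi(-\Delta)^s\gamma_R - \mathcal{B}(\phi,\gamma_R)$. This produces the leading term $\gamma_R\big[\rho\phi_t + \Delta\phi - (-\Delta)^s\phi\big]$, which is $\le 0$ by the supersolution hypothesis on $\phi$, plus a collection of remainder terms involving $\Delta\gamma_R$, $\langle\nabla\phi,\nabla\gamma_R\rangle$, $\phi(-\Delta)^s\gamma_R$ and $\mathcal{B}(\phi,\gamma_R)$ — all supported in the annulus $B_R\setminus B_{R/2}$ or controlled via the $\mathcal{L}^s$-decay. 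By the growth bound \eqref{eq32d} on $\phi$ and the hypothesis $u\in L^p_\psi(S_T)$, these remainder terms tend to zero as $R\to\infty$: this is precisely the content of Lemma \ref{lemma4} applied (for each fixed $t$, then integrated in $t$ using dominated convergence) to $v = |u(\cdot,t)|^p$, which lies in $L^1_\psi(\R^N)$ by assumption. Passing to the limit $R\to\infty$ (using monotone convergence on the main negative term and the vanishing of the remainders) yields
\begin{equation*}
0 \ge \int_0^\tau\!\!\int_{\R^N} |u|^p\,\rho\,\phi_t\,dx\,dt \;+\; (\text{something})\quad\Longrightarrow\quad \int_{\R^N}|u(x,\tau)|^p\,\phi(x,\tau)\,dx \le 0 \quad\text{(after rearranging)},
\end{equation*}
forcing $u(\cdot,\tau)\equiv 0$; since $\tau\in(0,T)$ is arbitrary, $u\equiv 0$ in $S_T$.

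The main obstacle I anticipate is bookkeeping the time-boundary term and the sign of the $\phi_t$ contribution. In the elliptic proof the test function absorbs into a clean inequality $\int |u|^p[\LL\zeta - p\rho c\zeta]\ge 0$ with a strictly negative integrand; in the parabolic case the analogous object is $\int_0^\tau\!\int |u|^p[\rho\phi_t + \Delta\phi - (-\Delta)^s\phi]\,dx\,dt + \int|u(\tau)|^p\phi(\tau)\,dx \le \int|u(0)|^p\phi(0)\,dx = 0$, and one must check that the supersolution inequality $\rho\phi_t + \Delta\phi - (-\Delta)^s\phi \le 0$ (equation \eqref{eq32b} with ``$\ge$'' reversed appropriately — note the sign convention there) combines with the nonnegative discarded terms $\rho c$-type contributions to still give a negative leading term, so that both pieces on the left are $\le 0$ and hence $\int|u(\tau)|^p\phi(\tau)\,dx\le 0$. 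A secondary subtlety is justifying the integration by parts in the nonlocal term uniformly in $t$ and interchanging the $t$-integral with the $R\to\infty$ limit; this is handled by the integrability assumptions $|u(\cdot,t)|^p\in\mathcal{L}^s(\R^N)$ for each $t$ and $u\in L^1((0,T),\mathcal{L}^s(\R^N))$ in Definition \ref{defsolp}, together with dominated convergence. Modulo these checks, the argument is a faithful parabolic transcription of the proof of Proposition \ref{prop3}.
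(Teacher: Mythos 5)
Your strategy is exactly the one the paper follows: regularize with $G_\alpha(r)=(r^2+\alpha)^{p/2}$, use Lemma \ref{lemma2} and \eqref{eq39bis} to show $G_\alpha(u)$ is a subsolution of the parabolic equation, integrate against $v=\phi\gamma_R$ via the space-time duality formula \eqref{eq36} (with the boundary terms at $t=\epsilon$ and $t=\tau$), let $\epsilon\to 0^+$ and then $\alpha\to 0^+$ so that the initial contribution $\alpha^{p/2}\int\rho\,v(\cdot,0)\,dx$ disappears, expand $\rho v_t+\Delta v-(-\Delta)^s v$ by the product rules and \eqref{eq30}, discard the term $\gamma_R[\rho\phi_t+\Delta\phi-(-\Delta)^s\phi]\le 0$, and kill the cut-off remainders as $R\to\infty$. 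The paper packages the last step as a separately stated parabolic Lemma \ref{lemma1} rather than applying Lemma \ref{lemma4} at each fixed $t$ and integrating with dominated convergence, but the content is the same (the fixed-$t$ bounds are dominated by $C\int_{\R^N}|u(x,t)|^p\psi(x)\,dx$, which is integrable in $t$ since $u\in L^p_\psi(S_T)$), so this is not a genuine deviation.

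The one point where your write-up, as it stands, would not close is the sign (and weight) of the time-boundary term at $t=\tau$. The duality formula \eqref{eq36}, combined with the subsolution property of $G_\alpha(u)$ and $v\ge 0$, yields after the two limits exactly \eqref{eq313}: $\int_{\R^N}\rho(x)|u(x,\tau)|^p v(x,\tau)\,dx \le \int_0^\tau\!\int_{\R^N}|u|^p[\Delta v-(-\Delta)^s v+\rho v_t]\,dx\,dt$, i.e.\ the $\tau$-term appears with a \emph{plus} sign, weighted by $\rho$, on the \emph{small} side of the inequality. Your displayed inequality instead has the space-time integral $\ge -\int|u(\tau)|^p\gamma_R\phi(\tau)\,dx$, which after dropping the nonpositive supersolution term and sending $R\to\infty$ only gives the trivial statement $\int|u(\tau)|^p\phi(\tau)\,dx\ge 0$; likewise the closing heuristic ``both pieces on the left are $\le 0$, hence $\int|u(\tau)|^p\phi(\tau)\,dx\le 0$'' is not a valid implication. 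The correct bookkeeping is the paper's \eqref{eq314}--\eqref{eq316}: from \eqref{eq313} with $v=\phi\gamma_R$ one gets $\int\rho|u(\tau)|^p\phi(\tau)\gamma_R\,dx\le$ (cut-off remainders), the right-hand side tends to $0$ by Lemma \ref{lemma1}, the left-hand side tends to $\int\rho|u(\tau)|^p\phi(\tau)\,dx$ by monotone convergence, and positivity of $\rho$ (from \eqref{h3}) and of $\phi$ then forces $u(\cdot,\tau)\equiv 0$ for every $\tau\in(0,T)$. With the boundary term put on the correct side (and the factor $\rho$ retained), your argument coincides with the paper's proof.
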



To prove Proposition \ref{prop1}, we need an auxiliary lemma which is the parabolic analogue of Lemma \ref{lemma4}.

\begin{lemma}\label{lemma1}
Let $\tau\in (0,T)$, $\phi\in C^2(\bar S_\tau)$, $\phi>0$; suppose that \eqref{eq32d} is satisfied. Let $v\in L^1_{\psi}(S_\t)$. Then
\begin{equation}\label{eq31bis}
\int_0^\t\int_{\R^N}|v(x,t)|\phi(x,t)|(-\Delta)^s\g_R(x)|\,dxdt +\int_0^\t\int_{\R^N}|v(x,t)|\,|\mathcal B(\phi, \g_R)(x)|dx dt\longrightarrow 0 \end{equation} and
\begin{equation}\label{eq31tris}
\int_0^\t\int_{\R^N}|v(x,t)|\phi(x,t)|\Delta\g_R(x)|\,dxdt \,+\int_0^\t\int_{\R^N}|v(x,t)|\,|\left\langle \nabla\phi(x,t), \nabla\g_R(x)\right\rangle| dx dt\longrightarrow 0,
\end{equation}
as $R\to\infty$.
\end{lemma}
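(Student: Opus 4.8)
The plan is to reduce Lemma~\ref{lemma1} to its elliptic counterpart, Lemma~\ref{lemma4}, exploiting that the cut-off functions $\gamma_R$ in \eqref{eq32} depend on the space variable only: hence \emph{all} the pointwise bounds on $\Delta\gamma_R$, $\nabla\gamma_R$, $(-\Delta)^s\gamma_R$ and $\mathcal{B}(\,\cdot\,,\gamma_R)$ used in the stationary case remain available verbatim, uniformly in $t\in[0,\tau]$, and the only new feature is the extra integration in time.

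For the limit \eqref{eq31bis} I would argue exactly as in the proof of \cite[Lemma~3.1]{PV1} (see also \cite{MePu}). Using the scaling identity $(-\Delta)^s\gamma_R(x)=R^{-2s}\big((-\Delta)^s\gamma\big)(x/R)$ together with the bound $\phi(x,t)\le C\psi(x)$ furnished by \eqref{eq32d} --- which holds \emph{uniformly in $t$} --- one produces a function $\omega_R\colon\R^N\to[0,\infty)$, independent of $t$, with $0\le\omega_R\le M$ for some $M>0$ and $\omega_R(x)\to 0$ pointwise as $R\to\infty$, such that
\[\phi(x,t)\,|(-\Delta)^s\gamma_R(x)|+|\mathcal{B}(\phi(\cdot,t),\gamma_R)(x)|\le C\,\psi(x)\,\omega_R(x)\qquad\text{for all }(x,t)\in S_\tau.\]
Multiplying by $|v(x,t)|$, integrating over $S_\tau=\R^N\times(0,\tau]$ and invoking the dominated convergence theorem --- the dominating function $C\,M\,|v(x,t)|\psi(x)$ being integrable on $S_\tau$ since $v\in L^1_\psi(S_\tau)$ --- gives \eqref{eq31bis}.

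For \eqref{eq31tris}, which is the genuinely new ingredient, I would mimic the proof of Lemma~\ref{lemma4}. Since $\gamma_R\equiv1$ on $B_{R/2}$ and $\gamma_R\equiv0$ outside $B_R$, the functions $\Delta\gamma_R$ and $\nabla\gamma_R$ are supported in the annulus $B_R\setminus B_{R/2}$, where $|\Delta\gamma_R|\le\bar C/R^2$ and $|\nabla\gamma_R|\le\bar C/R$ for some $\bar C>0$ depending only on $\gamma$. Combining this with \eqref{eq32d} and the Cauchy--Schwarz inequality, I obtain
\[\int_0^\tau\!\!\int_{\R^N}|v|\,\phi\,|\Delta\gamma_R|\,dx\,dt+\int_0^\tau\!\!\int_{\R^N}|v|\,|\langle\nabla\phi,\nabla\gamma_R\rangle|\,dx\,dt\le\bar C\,C\Big(\tfrac1{R^2}+\tfrac1R\Big)\int_0^\tau\!\!\int_{B_R\setminus B_{R/2}}|v(x,t)|\,\psi(x)\,dx\,dt.\]
As $v\in L^1_\psi(S_\tau)$, the right-hand integral is the tail of a convergent integral and hence tends to $0$ as $R\to\infty$; a fortiori so does the left-hand side, which is \eqref{eq31tris}.

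I do not anticipate a serious obstacle: the statement is essentially a time-parametrised repetition of Lemma~\ref{lemma4}. The only point requiring attention is the \emph{uniformity in $t$} of the bound $\phi(x,t)\le C\psi(x)$ --- guaranteed precisely by \eqref{eq32d} --- which is what legitimises passing to the limit on the product domain $S_\tau$, integrability coming from $v\in L^1_\psi(S_\tau)$. Accordingly I would keep the write-up short: note that \eqref{eq31bis} follows as in \cite{PV1,MePu}, and spell out in full only the elementary annulus estimate yielding \eqref{eq31tris}.
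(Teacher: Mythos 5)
Your proof is correct, and it is exactly the argument the paper intends: Lemma~\ref{lemma1} is in fact stated in the paper \emph{without proof}, introduced only as ``the parabolic analogue of Lemma~\ref{lemma4}''. You correctly identify that the crucial point is the uniformity in $t$ of the bound \eqref{eq32d}, which lets the pointwise estimates from the elliptic Lemma~\ref{lemma4} pass unchanged to the product domain $S_\tau$, and your annulus estimate for \eqref{eq31tris} is the verbatim parabolic version of the paper's \eqref{eq33bis}--\eqref{eq33}, while \eqref{eq31bis} is handled, as in the paper, by appeal to \cite[Lemma~3.1]{PV1}.
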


\medskip

\begin{proof}[ Proof of Proposition \ref{prop1}] Let $\t\in(0,T)$. Take a nonnegative function
$$v\in C^2(\bar S_\t)\,\, \text{with}\,\,\operatorname{supp}v(\cdot, t)\,\, \text{compact for each}\,\,t\in [0,\t].$$
Moreover, take a function $w\in C(\bar S_\t)\cap  L^1\big((0,\t);\mathcal L^s(\R^N)\big)$ such that $w_t\in C(S_T)$ and for each $t\in(0,\t]$,
$$
w(\cdot, t)\in \mathcal L^s(\R^N)\cap C^2(\R^N).
$$
For any $\epsilon\in (0,\tau)$, integrating by parts we have:
\begin{equation}\label{eq36}
\begin{aligned}
& \int_\epsilon^\t\int_{\R^N} v \big[ \Delta w-(-\Delta)^s w-  \rho w_t  \big]\,  dx dt \\
&= \int_ \epsilon ^\t \int_{\R^N} w\big[ \Delta v- (- \Delta)^s v+  \rho v_t \big]\, dx dt\\
&-\int_{\R^N} \rho(x) v(x,\tau)w(x,\tau) dx +\int_{\R^N} \rho(x) v(x,\epsilon) w(x,\epsilon) dx\,.
\end{aligned}
\end{equation}
Let $G_\alpha$ be defined as in \eqref{eq37}. By the differential equation in problem \eqref{problema} and due to \eqref{eq37}, \eqref{eq38},
\begin{equation}\label{eq39}
\rho [G_\alpha(u)]_t=\rho G_\alpha'(u) u_t=-G_\alpha'(u)[-\Delta u +(-\Delta)^s u]\quad \textrm{in}\;\, S_T\,.
\end{equation}
From \eqref{eq38}, \eqref{eq39bis}, \eqref{eq39} and Lemma \ref{lemma2}, since $p\ge 1$, we obtain
\begin{equation}\label{eq310}
\rho [G_\alpha(u)]_t -\Delta [G_{\alpha}(u)]+ (-\Delta)^s[G_\alpha(u)]\le -G_{\alpha}''(u)|\nabla u|^2\le 0 \quad \textrm{in}\;\; S_T\,.
\end{equation}
So, from \eqref{eq36} with $w=G_\alpha(u)$ and \eqref{eq310} we obtain
\begin{equation}\label{eq311}
\begin{aligned}
\int_{\R^N} \rho(x) G_\alpha[u(x,\tau)] v(x,\tau) \,dx &\leq  \int_ \epsilon ^\tau\int_{\R^N} G_\alpha(u) \big[\Delta v- (- \Delta)^s v+\rho v_t \big]\, dx dt \\
& + \int_{\R^N} \rho(x) v(x,\epsilon) G_\alpha[u(x,\epsilon)] \,dx\,.
\end{aligned}
\end{equation}
Letting $\epsilon\to 0^+$ in \eqref{eq311}, by the dominated convergence theorem,
\begin{equation}\label{eq312}
\begin{aligned}
\int_{\R^N} \rho(x) G_\alpha[u(x,\t)] v(x,\tau)\, dx &\leq  \int_0^\tau \int_{\R^N} G_\alpha(u) \big[\Delta v- (- \Delta)^s v+\rho  v_t \big] \,dx dt \\
&\quad + \alpha^{p/2} \int_{\R^N} \rho(x) v(x,0)\, dx\,.
\end{aligned}
\end{equation}
Now, letting $\alpha\to 0^+$ in \eqref{eq312}, by the dominated convergence theorem,
\begin{equation}\label{eq313}
\int_{\R^N} \rho(x)  |u(x,\t)|^p v(x,\tau) \, dx \leq  \int_0^\tau \int_{\R^N} |u|^p \big[\Delta v- (- \Delta)^s v+ \rho  v_t  \big] \,dx dt
\end{equation}

For any $R>0$, we can choose
$$
v(x,t):= \phi(x,t)\gamma_R(x)\quad \textrm{for all}\;\; (x, t)\in \bar S_\t\,.
$$
Using the fact that $\phi$ is a supersolution to equation \eqref{eq32b} and $\gamma_R\geq 0$, we obtain
\begin{equation}\label{eq314}
\begin{aligned}
 \rho v_t+\Delta v- (- \Delta)^s v&=\gamma_R \left[ \rho  \phi_t +\Delta\phi-(-\Delta)^s \phi  \right]\\
&\quad\,+\phi\Delta\gamma_R + 2\left\langle \nabla\phi, \nabla \gamma_R\right \rangle- \phi(-\Delta)^s\gamma_R + \mathcal B(\phi,\g_R)  \\
& \leq \phi\Delta\gamma_R + 2\,|\left\langle \nabla\phi, \nabla \gamma_R\right \rangle|- \phi(-\Delta)^s\gamma_R + \mathcal B(\phi,\g_R)  \quad\quad \text{in}\;\; S_\tau\,.
\end{aligned}
\end{equation}
Since $|u|^p\geq 0$, by \eqref{eq313} and \eqref{eq314} we conclude
that
\begin{equation}\label{eq315}
\begin{aligned}
\int_{\R^N} \rho(x) & |u(x,\tau)|^p \phi(x,\tau)\gamma_R(x) dx \\
&\,\,\leq \int_0^\t \int_{\R^N} |u|^p \big[ \phi\,\Delta\gamma_R + 2\,|\left\langle \nabla\phi, \nabla \gamma_R\right \rangle|- \phi(-\Delta)^s\gamma_R + \mathcal B(\phi,\gamma_R)  \big]dx dt.
\end{aligned}
\end{equation}
Finally, from Lemma \ref{lemma1} with $v=|u|^p$ and the monotone convergence theorem, sending $R\to \infty$ in \eqref{eq315} we get
\begin{equation}\label{eq316}
\int_{\R^N} \rho(x) |u(x,\t)|^p\phi(x,\t)dx\leq 0\,.
\end{equation}
From \eqref{eq316} and \eqref{h3}, since $\phi>0$ in $S_\tau$ and $|u|^p\geq 0$ we infer that $u\equiv 0$ in $S_\tau$. This completes
the proof.
\end{proof}

\medskip


\begin{proof}[Proof of Theorem \ref{teo1}.]
Let $\psi=\psi(|x|)$ be defined as in \eqref{eq12}. Then, \eqref{eq320} and \eqref{eq321} are in force. For any $\lambda>0$ define
$$
\phi(x,t):= e^{-\l t}\psi(r)\quad \text{for all}\;\; (x,t)\in \bar S_T\,.
$$
At first observe that \eqref{eq32d} is satisfied. Indeed, one has, for some $C>0$,
$$
\begin{aligned}
\phi(x,t)+|\nabla\phi(x,t)|&\le\, e^{-\lambda t}\left\{\psi(r)+\psi'(r)\right\}\le C\,\psi(r) \quad\quad \text{for all}\,\,(x,t)\in \bar S_T.
\end{aligned}
$$

Now, we want to show that $\phi$ is a supersolution to equation \eqref{eq32b}.  Observe that $\lambda$ satisfies the same conditions as $pc_0$ in the proof of Theorem \ref{teo3}. To be specific, let us consider all the cases $(i)-(iv)$ as well as in the proof of Theorem \ref{teo3}.

\smallskip

\begin{itemize}
\item Let $(i)$ holds. We require that $$\lambda>\frac{1}{C_0}\left(\b-N+2\right).$$
\item Let $(ii)$ hold. We require that, for some $\varepsilon>0$,
$$
\lambda  >\frac2{C_0}\max\left\{\check C(C_1+\epsilon)\,;\,\beta(\beta+2),\, \left[M_{\epsilon,\b}+\beta(\beta+2) (1+R_{\varepsilon}^2)^{-1}\right](1+R^2_\varepsilon)^{\frac{\beta}2+\frac{\alpha}2}\right\}\,,
$$
(see \eqref{eq332}, \eqref{eq334}).
\item Let $(iii)$ hold. It is sufficiently to have, for some $\varepsilon>0$
$$
\lambda  >\frac 2{C_0}\max\left\{\check C(C_2+\varepsilon),\, \beta(\beta+2),\, \left[M_{\epsilon,\b}+\beta(\beta+2) (1+R_{\varepsilon}^2)^{-1}\right](1+R^2_\varepsilon)^{\frac{\beta}2+\frac{\alpha}2}\right\}\,,
$$
(see \eqref{eq336b}, \eqref{eq334}).
\item Finally, Let $(iv)$ hold. We ask that, for some $\varepsilon>0$
$$
\lambda >\frac 2{C_0}\max\left\{\check C(C_3+\varepsilon),\, \beta (\beta+2),\, \left[M_{\epsilon,\b}+\beta(\beta+2) (1+R_{\varepsilon}^2)^{-1}\right](1+R^2_\varepsilon)^{\frac{\beta}2+\frac{\alpha}2}\right\}\,,
$$
(see \eqref{eq339}, \eqref{eq334}).
\end{itemize}

Thus, by Proposition \ref{prop1} the conclusion follows.

\end{proof}

\appendix

\section{Maximum principle and the Dirichlet problem for $\LL-V(x)$} \label{sec:appendix}
In order to make the paper as self-contained as possible,
we collect in this Appendix the relevant definitions and results
on the mixed operator
$\LL_V = \LL u - V(x)u $
which have been used in the proof of Theorem \ref{cor4}.
To be more precise, we will be concerned with the following topics:
\begin{itemize}
 \item[1)] the weak maximum principle for weak sub/supersolutions;
 \item[2)] existence and (interior) regularity of the weak solutions of the $\LL_V$-Dirichlet problem.
\end{itemize}
\subsection*{1) The weak maximum principle for $\LL$.}
In this first part of the section we prove a weak ma\-xi\-mum/comparison principle
for \emph{weak sub/supersolution} of the equation $\LL u + V(x)u = 0$. In order to
introduce the adequate 
\emph{functional setting} to develop a \emph{weak theory} for the operator $\LL$,
we first need to extend the fractional Laplacian to \emph{non-regular}
 functions.
\medskip

\noindent\textbf{1.1) The weak fractional Laplacian.}
Let $\Omega\subseteq\R^N$ be an arbitrary open, bounded or not.
As the classical Laplacian (formally corresponding to the choice $s = 1$) is related,
via the Euler-Largange euqation,
to the Sobolev space $H^1(\Omega)$, the \emph{fractional Laplacian} is related
to the \emph{fractional Sobolev space} $H^s(\Omega)$, which is defined as follows:
$$H^s(\Omega) := \Big\{u\in L^2(\Omega):\,[u]^2_{s,\Omega} = \iint_{\Omega\times\Omega}
\frac{|u(x)-u(y)|^2}{|x-y|^{N+2s}}\,dx\,dy < \infty\Big\}.$$
While we refer to the recent monograph \cite{LeoniFract} for a thorough introduction
on fractional Sobolev spaces, here we list the few basic properties of $H^s(\Omega)$
we will exploit in this paper.
\begin{itemize}
 \item[i)] $H^s(\Omega)$ is endowed with a structure of a real Hilbert space by the scalar product
 $$\langle u,v\rangle_{s,\,\Omega} 
 := \iint_{\Omega\times\Omega}\frac{(u(x)-u(y))(v(x)-v(y))}{|x-y|^{N+2s}}\,dx\,dy\qquad
 (u,v\in H^s(\Omega)).$$ 
 
 \item[ii)] $C_0^\infty(\Omega)$ is a \emph{linear subspace of $H^s(\Omega)$}; in addition,
 in the particular case when $\Omega = \R^N$, it can be proved that
  $C_0^\infty(\R^N)$ is a \emph{dense subspace} of $H^s(\R^N)$.
 
 \item[iii)] If $\Omega = \R^N$ or if $\Omega$ has \emph{bounded boundary $\partial\Omega\in C^{0,1}$},
 the Sobolev space $H^1(\Omega)$ is \emph{continuously embedded into} $H^s(\Omega)$,
 that is, there exists $\mathbf{c} = \mathbf{c}(N,s) > 0$ such that
 \begin{equation} \label{eq:H1embeddingHs}
  \iint_{\Omega\times\Omega}\frac{|u(x)-u(y)|^2}{|x-y|^{N+2s}}\,dx\,dy \leq 
  \mathbf{c}\,\|u\|_{H^1(\Omega)}^2\quad\text{for every $u\in H^1(\Omega)$}.
 \end{equation}
  In particular, if $\Omega\subseteq\R^N$ is a
  \emph{bounded open set} (with no regularity assumptions on $\partial\Omega$) and if
  we denote by $\hat{u}\in H^1(\R^N)$ the \emph{zero-extension} of a function $u\in H^1(\Omega)$
  (that is, we define $\hat{u} = u\cdot\mathbf{1}_\Omega$),
  we can find a constant $\gamma = \gamma(N,s,\Omega) > 0$ such that
  \begin{equation} \label{eq:H01embeddingHs}
  \iint_{\R^{2N}}\frac{|\hat{u}(x)-\hat{u}(y)|^2}{|x-y|^{N+2s}}\,dx\,dy \leq 
  \gamma\,\int_\Omega|\nabla u|^2\,dx \quad\text{for every $u\in H_0^1(\Omega)$}.
 \end{equation}
 
 \item[iv)] When $\Omega = \R^N$, we have the characterization
 $$H^s(\R^N) = \big\{u\in L^2(\R^N):\,(1+|x|^{2s})|\mathcal{F}u|^2\in L^1(\R^N)\big\}. $$
\end{itemize}
The relation between the space $H^s(\Omega)$ and the fractional Laplacian $(-\Delta)^s$
is rooted in the following \emph{fractional integration-by-parts formula}: let 
$\Omega,\mathcal{V}\subseteq\R^N$ be open sets (bounded or not) such that $\Omega\Subset\mathcal{V}$,
and let $u\in C^2(\mathcal{V})\cap \mathcal{L}^s(\R^N)$; given any $\varphi\in C_0^\infty(\Omega)$, we have
\begin{equation} \label{eq:fractionalbyparts}
\begin{split}
 \int_{\Omega}(-\Delta)^su\,\varphi\,dx & = \frac{C_{N,s}}{2}\iint_{\R^{2N}}\frac{(u(x)-u(y))(\varphi(x)-\varphi(y))}{|x-y|^{N+2s}}\,dx\,dy \\
 & = \frac{C_{N,s}}{2}
 \iint_{\Omega\times\Omega}\frac{(u(x)-u(y))(\varphi(x)-\varphi(y))}{|x-y|^{N+2s}}\,dx\,dy
 \\
 &\qquad\quad-C_{N,s}\iint_{(\R^N\setminus\Omega)\times\Omega}
 \frac{(u(x)-u(y))\varphi(y)}{|x-y|^{N+2s}}\,dx\,dy
 \end{split}
\end{equation}
(note that, since $u\in C^2(\mathcal{V})\cap \mathcal{L}^s(\R^N)$, by
Proposition \ref{prop:welldefDeltas}
we have $(-\Delta)^su\in C(\mathcal{V})$).
As a con\-se\-quence of the above formula, it is then natural to define the
\emph{weak fractional Laplacian} $(-\Delta)^s u$
of a function $u\in H^s(\Omega)\cap \mathcal{L}^s(\R^N)$ as the
\emph{linear functional} acting on $C_0^\infty(\Omega)$ as follows
\begin{align*}
 (-\Delta)^s u(\varphi) & = \frac{C_{N,s}}{2}\iint_{\R^{2N}}\frac{(u(x)-u(y))
 (\varphi(x)-\varphi(y))}{|x-y|^{N+2s}}\,dx\,dy.
 \end{align*}
 Since we assuming that $u\in H^s(\Omega)\cap \mathcal{L}^s(\R^N)$, and since the kernel
 $|z|^{-n-2s}$ is integrable at infinity, it is easy to recognize 
 that the functional $(-\Delta)^s u$ is actually a \emph{distribution on $\Omega$}; more precisely,
 for every compact set $K\subseteq\Omega$ there exists a constant
 $C > 0$ such that
 \begin{equation} \label{eq:contDeltasu}
  \begin{gathered}
  \iint_{\R^{2N}}\frac{|u(x)-u(y)|
 |\varphi(x)-\varphi(y)|}{|x-y|^{N+2s}}\,dx\,dy 
 \leq C\,\mathrm{dist}(K,\R^N\setminus\Omega)^{-2s}\|\varphi\|_{H^s(\Omega)}
  \\[0.1cm]
  \text{for every $\varphi\in C_0^\infty(\Omega)$ with $\mathrm{supp}(\varphi)\subseteq K$}
  \end{gathered} 
 \end{equation}
 (here, the constant $C$ depends on $N,s$ and on $\|u\|_{1,s}$,
 see \eqref{eq:spaceL1s}), and this is enough to guarantee
 that $(-\Delta)^s u\in \mathcal{D}'(\Omega)$ (as $\|\varphi\|_{H^s(\Omega)}
 \leq c\,\|\varphi\|_{C^1(K)}$ for some absolute constant $c > 0$).
 \vspace*{0.05cm}  
 
 In the particular case when $u\in H^s(\R^N)\subseteq \mathcal{L}^s(\R^N)$,
  the above \eqref{eq:contDeltasu} shows that
  $(-\Delta)^s u$ can be continuously extended to 
  $H^s(\R^N)$
  (recall that $C_0^\infty(\R^N)$
  is \emph{dense} in $H^s(\R^N)$), and thus
  $$(-\Delta)^s u\in (H^s(\R^N))'.$$ 
  In this case, we also have
  $$\frac{\mathrm{d}}{\mathrm{d}t}\Big|_{t = 0}[u+tv]^2_{s,\R^N} = (-\Delta)^s u(v)\quad
  \text{for all $v\in H^s(\R^N)$}.$$
\medskip

\noindent\textbf{1.2) The weak maximum principle.} Thanks to the above preliminaries,
we can now establish a weak maximum principle for weak sub/supersolutions
of the equation $\LL u+V(x)u = 0$.
\vspace*{0.1cm}

\noindent\textbf{Notation.} Throughout what follows, if $\Omega\subseteq\R^N$ is a \emph{bounded}
open set and $u\in H_0^1(\Omega)$, we de\-no\-te by $\hat{u}$ the zero-extension of $u$ out of $\Omega$, 
that is,
$$\hat{u} := u\cdot\mathbf{1}_\Omega.$$
Since $u\in H^1_0(\Omega)$, we have $\hat{u}\in H^1(\R^N)$ and $\hat{u}\equiv 0$ a.e.\,in $\R^N\setminus\Omega$.
\medskip

To begin with, we give the following definitions.
\begin{definition} \label{def:weaksubsupersol}
 Let $\Omega\subseteq\R^N$ be a \emph{bounded} open set, and let $V\in L^2(\Omega)$. 
 Given any $f\in L^2(\Omega)$, we say that a function $u:\R^N\to\R$ is a \emph{weak
 subsolution [resp.\,\emph{supersolution}]} of
 \begin{equation} \label{eq:LLvzero}
  \LL u-V(x) = f\quad\text{in $\Omega$}
  \end{equation}
 if it satisfies the following properties:
 \begin{itemize}
  \item[a)] there exists a bounded open set $\Omega'$ such that $\Omega\Subset\Omega'$ and 
 $u\in H^1(\Omega')\cap \mathcal{L}^s(\R^N)$;
  \item[b)] for every test function $\varphi\in H^1_0(\Omega),\,\text{$\varphi\geq 0$ a.e.\,in $\Omega$}$ we have
  \begin{equation} \label{eq:weaksubsupersol}
  \begin{split}
   & -\int_\Omega\langle\nabla u,\nabla\varphi\rangle\,dx
   - \frac{C_{N,s}}{2}\iint_{\R^{2N}}\frac{(u(x)-u(y))(\hat{\varphi}(x)-\hat{\varphi}(y))}{|x-y|^{N+2s}}\,dx\,dy
   - \int_\Omega V(x)u\varphi\,dx \\
   & \qquad \geq\,[\text{resp.\,\,$\leq$}]\,
   \int_\Omega f(x)\varphi\,dx.
   \end{split}
  \end{equation}
 \end{itemize}
 We say that $u$ is a \emph{weak solution} of the equation
 \eqref{eq:LLvzero} if $u$ is both a weak subsolution and a
 weak supersolution of the same equation.
\end{definition}
\begin{remark} \label{rem:solwellposed}
 Let the assumptions and the notation of Definition \ref{def:weaksubsupersol} be in force.
 The regularity of $u$ on the \emph{larger} open set $\Omega'\Supset\Omega$ is somehow motivated
 by the \emph{nonlocal nature} of $\LL$ \emph{(}which is encoded in the nonlocal bilinear
 form $\langle\cdot,\cdot\rangle_{s,\,\R^N}$\emph{)}, and is essential to ensure that
 \begin{equation}  \label{eq:toprovedefwellposed}
  \iint_{\R^{2N}}\frac{|u(x)-u(y)||\hat{\varphi}(x)-\hat{\varphi}(y))|}{|x-y|^{N+2s}}\,dx\,dy<\infty\quad
 \text{for every $\varphi\in H_0^1(\Omega)$}.
 \end{equation}
 To see this we first observe that, 
 since $\Omega\Subset\Omega'$, it is possible to find a bounded open set $\mathcal{V}$ with
 \emph{smooth boundary} and such that $\Omega\Subset\mathcal{V}\Subset\Omega'$;
 thus, since $u\in H^1(\mathcal{V})$ \emph{(}recall that $u\in H^1(\Omega')$ and 
 that $\mathcal{V}\Subset\Omega'$\emph{)}, from the above \emph{iii)} we deduce that
 $u\in H^s(\mathcal{V})$. In view of this last fact, we are then entitled to
 apply \eqref{eq:contDeltasu} with the choice $K = \overline{\Omega}\subseteq\mathcal{V}$: this gives
 \begin{equation*}
 \begin{gathered}
  \iint_{\R^{2N}}\frac{|u(x)-u(y)||\varphi(x)-\varphi(y)|}{|x-y|^{N+2s}}\,dx\,dy
  \leq C\,\mathrm{dist}(\overline{\Omega},\R^N\setminus\mathcal{V})^{-2s}\|\varphi\|_{H^s(\mathcal{V})} \quad
  \forall\,\,\varphi\in C_0^\infty(\Omega).
 \end{gathered}
 \end{equation*}
 \emph{(}here, the constant $C > 0$ depends on $N,s$ and on $\|u\|_{1,s}$\emph{)}.
 Now, by combining this last displayed estimate with 
 the above \eqref{eq:H01embeddingHs} in \emph{ii)},
 we get
 \begin{equation} \label{eq:tousecontTg}
  \iint_{\R^{2N}}\frac{|u(x)-u(y)||\varphi(x)-\varphi(y)|}{|x-y|^{N+2s}}\,dx\,dy
  \leq C'\,\mathrm{dist}(\overline{\Omega},\R^N\setminus\mathcal{V})^{-2s}
  \Big(\int_\Omega|\nabla\varphi|^2\,dx\Big)^{1/2},
 \end{equation}
 for a suitable constant $C' > 0$, also depending on $\Omega$;
 from this, by taking into account that $C_0^\infty(\Omega)$ is dense in $H_0^1(\Omega)$, we
 easily
 conclude that \eqref{eq:toprovedefwellposed} holds.
 \vspace*{0.05cm}
 
 As a matter of fact, one can define a weak subsolution/supersolution of 
 equation \eqref{eq:LLvzero} as a function $u\in H^1_{\mathrm{loc}}(\Omega)$
 satisfying \eqref{eq:weaksubsupersol} for every non-negative test function
  $\varphi\in H_0^1(\mathcal{O})$,
 where $\mathcal{O}$ is as an arbitrary open set compactly contained in $\Omega$, see
 \cite{GaLi}; however, in order to e\-sta\-blish the weak maximum principle
 for \emph{non-negative} weak subsolutions of \eqref{eq:LLvzero} we will need to use
 $\varphi = u^-$ as a test function, 
 and such a function \emph{does not belong} to $H_0^1(\mathcal{O})$
 if $\mathcal{O}\Subset\Omega$
 \emph{(}rather, we will see that $u^-\in H_0^1(\Omega)$ as a consequence
 of the assumption $u\geq 0$ in $\R^N\setminus\Omega$\emph{)}.
 On the other hand, we will see in the next paragraph that this
 `improved' regularity out of $\Omega$ is satisfied by the \emph{weak solutions} of the $\LL$-Dirichlet
 problem, see Theorem \ref{thm:existenceDir} below.
\end{remark}
\begin{definition} \label{def:weaksolnonneg}
 Let $\Omega\subseteq\R^N$ be a \emph{bounded} open set, and let $u:\R^N\to\R$ be a measurable 
 fun\-ction. We say that
 $u\geq 0$ in $\R^N\setminus\Omega$ \emph{(}in the weak sense\emph{)} if
 $$\text{$u^- = \max\{-u,0\}\in H_0^1(\Omega)$ and $u\geq 0$ a.e.\,in $\R^N\setminus\Omega$}.$$
\end{definition}
We are now ready to prove the following weak maximum principle.
\begin{theorem} \label{thm:wmpsubsol}
 Let $\Omega\subseteq\R^N$ be a \emph{bounded} open set, and 
 let $V\in L^2(\Omega),\,\text{$V\geq 0$ a.e.\,in $\Omega$}$. 
 Moreover, let $u:\R^N\to\R$ be a weak supersolution of the equation $\LL u-V(x)u = 0$ in $\Omega$.
 
 If $u\geq 0$ in $\R^N\setminus\Omega$ \emph{(}in the weak sense\emph{)}, then $u\geq 0$ a.e.\,in $\Omega$.
\end{theorem}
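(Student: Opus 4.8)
The plan is the standard one: test the defining inequality for weak supersolutions with $\varphi=u^-:=\max\{-u,0\}$. First I would check that this is legitimate. By hypothesis $u\ge 0$ in $\R^N\setminus\Omega$ in the weak sense, which by Definition \ref{def:weaksolnonneg} means precisely that $u^-\in H_0^1(\Omega)$ and $u\ge 0$ (hence $u^-=0$) a.e.\ in $\R^N\setminus\Omega$; since $\Omega$ is bounded, $\varphi=u^-$ is thus an admissible non-negative test function in \eqref{eq:weaksubsupersol}, and its zero-extension $\widehat{u^-}$ agrees a.e.\ on $\R^N$ with $u^-$ itself, so in the nonlocal term of \eqref{eq:weaksubsupersol} we may simply write $u^-$. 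All integrals involved are finite: $[u^-]^2_{s,\R^N}<\infty$ by \eqref{eq:H01embeddingHs}, while the mixed integral is finite by \eqref{eq:toprovedefwellposed} (recall that, by Definition \ref{def:weaksubsupersol}, $u\in H^1(\Omega')$ for some bounded $\Omega'$ with $\Omega\Subset\Omega'$, so in particular $u|_\Omega\in H^1(\Omega)$). With $f\equiv 0$, the weak supersolution inequality with $\varphi=u^-$ becomes
\[
-\int_\Omega\langle\nabla u,\nabla u^-\rangle\,dx-\frac{C_{N,s}}{2}\iint_{\R^{2N}}\frac{(u(x)-u(y))(u^-(x)-u^-(y))}{|x-y|^{N+2s}}\,dx\,dy-\int_\Omega V(x)\,u\,u^-\,dx\le 0.
\]

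Next I would bound each of the three terms on the left from below by a non-negative quantity. Writing $u=u^+-u^-$ on $\Omega$ (with $u^+,u^-\in H^1(\Omega)$), the usual Stampacchia identities $\nabla u^+=\mathbf{1}_{\{u>0\}}\nabla u$ and $\nabla u^-=-\mathbf{1}_{\{u<0\}}\nabla u$ give $\langle\nabla u^+,\nabla u^-\rangle=0$ and $u^+u^-=0$ a.e.\ in $\Omega$, so that $-\langle\nabla u,\nabla u^-\rangle=|\nabla u^-|^2$ and $u\,u^-=-(u^-)^2$ a.e.; hence the first term equals $\int_\Omega|\nabla u^-|^2\,dx\ge 0$ and the third equals $\int_\Omega V(x)(u^-)^2\,dx\ge 0$ (here we use $V\ge 0$). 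For the nonlocal term I would use the algebraic identity, valid for all $x,y\in\R^N$,
\[
-(u(x)-u(y))(u^-(x)-u^-(y))=-\big(u^+(x)-u^+(y)\big)\big(u^-(x)-u^-(y)\big)+\big(u^-(x)-u^-(y)\big)^2,
\]
together with the elementary pointwise inequality $(a^+-b^+)(a^--b^-)\le 0$ for all $a,b\in\R$ (verified by distinguishing the signs of $a$ and $b$): this shows that the first summand on the right-hand side is non-negative, whence
\[
-\frac{C_{N,s}}{2}\iint_{\R^{2N}}\frac{(u(x)-u(y))(u^-(x)-u^-(y))}{|x-y|^{N+2s}}\,dx\,dy\ \ge\ \frac{C_{N,s}}{2}\,[u^-]^2_{s,\R^N}\ \ge\ 0.
\]

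Finally, inserting these three lower bounds into the supersolution inequality yields
\[
\int_\Omega|\nabla u^-|^2\,dx+\frac{C_{N,s}}{2}\,[u^-]^2_{s,\R^N}+\int_\Omega V(x)(u^-)^2\,dx\ \le\ 0,
\]
and since every summand is non-negative, each must vanish; in particular $\int_\Omega|\nabla u^-|^2\,dx=0$. As $u^-\in H_0^1(\Omega)$ and $\Omega$ is bounded, the Poincar\'e inequality forces $u^-\equiv 0$ a.e.\ in $\Omega$, i.e.\ $u\ge 0$ a.e.\ in $\Omega$, which is the assertion.

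I expect the only genuinely delicate part to be the bookkeeping in the first step --- checking that $u^-$ is an admissible test function and that $\widehat{u^-}$ may be replaced by $u^-$ in the nonlocal bilinear form, both of which rely squarely on the ``weak sense'' reading of the hypothesis $u\ge 0$ in $\R^N\setminus\Omega$ --- together with the sign of the nonlocal term, which is governed by the inequality $(a^+-b^+)(a^--b^-)\le 0$, the nonlocal analogue of $\nabla u^+\cdot\nabla u^-=0$. Everything else is a routine computation.
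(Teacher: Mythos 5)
Your proof is correct and follows essentially the same route as the paper's: test the weak supersolution inequality with $\varphi = u^-$ (which is legitimate precisely because the ``weak sense'' hypothesis gives $u^-\in H_0^1(\Omega)$ and $u^-\equiv 0$ a.e.\ outside $\Omega$), then observe that each of the three resulting terms has the favourable sign. The only difference is cosmetic: where the paper simply records the pointwise inequality $(u(x)-u(y))(u^-(x)-u^-(y))\le 0$ to discard the nonlocal term, you extract the sharper lower bound $\tfrac{C_{N,s}}{2}[u^-]^2_{s,\R^N}$ via the decomposition through $u^+$ --- both yield the same conclusion once Poincar\'e is invoked on the local term.
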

The proof of Theorem \ref{thm:wmpsubsol} follows the same lines of that of
\cite[Thereon 1.2]{BDVV}, where the same result is proved in the case $V\equiv 0$ (and under
the stronger assumption $u\in H^1(\R^N)$); however, we present it here for the sake of completeness.
\begin{proof}
 First of all we observe that, since $u\geq 0$ in $\R^N\setminus\Omega$ (in the weak sense),
 we have $u^-\in H_0^1(\Omega)$ and $u^-\equiv 0$ a.e.\,in $\R^N\setminus\Omega$ (that is,
 $u^- = u^-\cdot\mathbf{1}_\Omega$); as a consequence, 
 since $u$ is a weak supersolution of the equation $\LL u - V(x)u = 0$ in $\Omega$
 (and since, obviously, $u^-\geq 0$ in $\Omega$), we are entitled
 to use the function $u^- = u^-\cdot\mathbf{1}_\Omega$ 
 as a test function in \eqref{eq:weaksubsupersol}, obtaining
 \begin{align*}
 & -\int_\Omega\langle\nabla u,\nabla u^-\rangle\,dx
   - \frac{C_{N,s}}{2}\iint_{\R^{2N}}\frac{(u(x)-u(y))(u^-(x)-u^-(y))}{|x-y|^{N+2s}}\,dx\,dy
   -\int_\Omega V(x)u\,u^-\,dx \leq 0.
 \end{align*}
 Now, since $V\geq 0$ a.e.\,in $\Omega$ and since $u\,u^- = -u^2\cdot\mathbf{1}_{\{u < 0\}}\leq 
 0$ a.e.\,in $\R^N$, we have
 $$\int_\Omega V(x)u\,u^-\,dx\leq 0;$$
 moreover, a direct computation shows that 
 $$\text{$(u(x)-u(y))(u^-(x)-u^-(y))\leq 0$ for a.e.\,$x,y\in\R^N$},$$ 
 and thus we have
 $$\frac{C_{N,s}}{2}\iint_{\R^{2N}}\frac{(u(x)-u(y))(u^-(x)-u^-(y))}{|x-y|^{N+2s}}\,dx\,dy\leq 0.$$
 Gathering all these facts, we then get
 \begin{equation}\label{eq:toconcludeWMP}
  \begin{split}
  &0 \geq -\int_{\Omega}\langle\nabla u,\nabla u^-\rangle\,dx = \int_{\Omega}|\nabla u^-|^2\,dx\geq 0.
 \end{split}
 \end{equation}
 Recalling that $u^-\in H_0^1(\Omega)$, from the above 
 \eqref{eq:toconcludeWMP} we infer that
 $u^- = \max\{-u,0\}\equiv 0$ a.e.\,in $\Omega$, and hence $u\geq 0$ a.e.\,in $\Omega$.
 This ends the proof.
\end{proof}
\subsection*{2) The $\LL_V$-Dirichlet problem.}
We conclude this appendix by briefly turning our attention
to the Dirichlet problem driven by the operator $\LL_V = \LL - V(x)$.
\vspace*{0.1cm}

To begin with, we give the following definition.
\begin{definition} \label{def:solDirPB}
 Let $\Omega\subseteq\R^N$ be a \emph{bounded} open set, and $V\in L^2(\Omega)$. Moreover,
 let $f\in L^2(\Omega)$ and let $g:\R^N\to\R$ be a measurable function.
 We say that a function $u:\R^N\to\R$ is a \emph{(}weak\emph{)} \emph{solution}
 of the $\LL_V$-Dirichlet problem
 $$\mathrm{(D)}\qquad\quad
 \begin{cases}
 \LL u -V(x)u = f & \text{in $\Omega$}, \\
 u = g & \text{in $\R^N\setminus\Omega$}
 \end{cases}$$
 if it satisfies the following properties:
 \begin{itemize}
  \item[a)] $u$ is a weak solution of the equation $\LL u -V(x)u = f$, 
  in the sense of Definition \ref{def:weaksubsupersol};
  \item[b)] $u-g\in H_0^1(\Omega)$ and $u\equiv g$ a.e.\,in $\R^N\setminus\Omega$.
 \end{itemize}
\end{definition}
\begin{remark} \label{rem:regulgDirPb}
 Let the assumptions and the notation of Definition \ref{def:solDirPB} be in force,
 and \emph{assume} that there exists a solution $u:\R^N\to\R$ of the $\LL_V$-Dirichlet problem $\mathrm{(D)}$.
 Then,
 $$\text{$g\in H^1(\Omega')\cap \mathcal{L}^s(\R^N)$ for some bounded open set $\Omega'\Supset\Omega$}.$$
 In fact, since $u$ is a weak solution of $\LL u -V(x)u = f$, by Definition
 \ref{def:weaksubsupersol} there exists a bounded open set $\Omega'\Supset\Omega$ such that
 $u\in H^1(\Omega')\cap \mathcal{L}^s(\R^N)$; thus, from property \emph{b)} we get
 $$g = (g-u)\cdot\mathbf{1}_\Omega+u\in H^1(\Omega')\cap \mathcal{L}^s(\R^N)$$
 \emph{(}recall that $(g-u)\cdot\mathbf{1}_\Omega\in H^1(\R^N)\subseteq \mathcal{L}^s(\R^N)$, as $u-g\in H_0^1(\Omega)$\emph{)}.
\end{remark}
We are now ready to prove the following existence result.
\begin{theorem} \label{thm:existenceDir}
 Let $\Omega\subseteq\R^N$ be a \emph{bounded} open set, and let 
 $V\in L^\infty(\Omega),\,\text{$V\geq 0$ a.e.\,in $\Omega$}$. Moreover,
 let $f\in L^2(\Omega)$ and let $g\in H^1(\Omega')\cap \mathcal{L}^s(\R^N)$, where $\Omega'\subseteq\R^N$
 is a bounded open set such that $\Omega\Subset\Omega'$.
 Then, there exists a \emph{unique weak solution} $u\in H^1(\Omega')\cap \mathcal{L}^s(\R^N)$
 of problem $\mathrm{(D)}$.
\end{theorem}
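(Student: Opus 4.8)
The plan is to recast problem $\mathrm{(D)}$ as a coercive variational problem on the Hilbert space $H_0^1(\Omega)$ and to apply the Lax--Milgram theorem. First I would reduce to homogeneous exterior data: I look for a solution of the form $u = v+g$, where $v\in H_0^1(\Omega)$ is the new unknown, identified with its zero-extension $\hat v = v\cdot\mathbf{1}_\Omega\in H^1(\R^N)$. With this Ansatz the conditions $u-g\in H_0^1(\Omega)$ and $u\equiv g$ a.e.\ in $\R^N\setminus\Omega$ are automatic, and — writing $u(x)-u(y)=(\hat v(x)-\hat v(y))+(g(x)-g(y))$ — the weak equation \eqref{eq:weaksubsupersol} for $u$ is equivalent to the identity $\mathcal{A}(v,\varphi)=F(\varphi)$ for all $\varphi\in H_0^1(\Omega)$, where
\[
 \mathcal{A}(v,\varphi):=\int_\Omega\langle\nabla v,\nabla\varphi\rangle\,dx
 +\frac{C_{N,s}}{2}\iint_{\R^{2N}}\frac{(\hat v(x)-\hat v(y))(\hat\varphi(x)-\hat\varphi(y))}{|x-y|^{N+2s}}\,dx\,dy
 +\int_\Omega V(x)v\varphi\,dx
\]
and
\[
 F(\varphi):=-\int_\Omega f\varphi\,dx-\int_\Omega\langle\nabla g,\nabla\varphi\rangle\,dx
 -\frac{C_{N,s}}{2}\iint_{\R^{2N}}\frac{(g(x)-g(y))(\hat\varphi(x)-\hat\varphi(y))}{|x-y|^{N+2s}}\,dx\,dy
 -\int_\Omega V(x)g\varphi\,dx .
\]

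Next I would verify the hypotheses of Lax--Milgram on $H_0^1(\Omega)$, endowed with the norm $\|\nabla\cdot\|_{L^2(\Omega)}$ (equivalent to the full $H^1$-norm by the Poincar\'e inequality on the bounded set $\Omega$). \emph{Coercivity} of $\mathcal{A}$ is immediate: the gradient term equals $\|\nabla v\|_{L^2(\Omega)}^2$, while the nonlocal term is nonnegative (as $C_{N,s}>0$) and the zeroth-order term is nonnegative because $V\geq 0$ a.e.\ in $\Omega$. \emph{Boundedness} of $\mathcal{A}$: the gradient and zeroth-order terms are controlled by Cauchy--Schwarz (using $V\in L^\infty(\Omega)$), and the nonlocal term is bounded, via Cauchy--Schwarz and the embedding \eqref{eq:H01embeddingHs}, by a constant times $\|\nabla v\|_{L^2(\Omega)}\|\nabla\varphi\|_{L^2(\Omega)}$. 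As for \emph{boundedness of $F$}, the terms containing $f$ and $\nabla g$ are obvious, $\int_\Omega V g\varphi$ is controlled since $V\in L^\infty(\Omega)$ and $g\in H^1(\Omega)$, and the only delicate term is the nonlocal one involving $g$; here I would repeat \emph{verbatim} the argument of Remark \ref{rem:solwellposed} with $u$ replaced by $g$ (combining \eqref{eq:contDeltasu} on an intermediate smooth domain $\mathcal{V}$ with $\Omega\Subset\mathcal{V}\Subset\Omega'$ and the embedding \eqref{eq:H01embeddingHs}, exactly as in \eqref{eq:tousecontTg}), which is licit precisely because $g\in H^1(\Omega')\cap\mathcal{L}^s(\R^N)$ with $\Omega\Subset\Omega'$; this yields
\[
 \iint_{\R^{2N}}\frac{|g(x)-g(y)|\,|\hat\varphi(x)-\hat\varphi(y)|}{|x-y|^{N+2s}}\,dx\,dy
 \leq C\Big(\int_\Omega|\nabla\varphi|^2\,dx\Big)^{1/2}\qquad\text{for all }\varphi\in H_0^1(\Omega).
\]
Lax--Milgram then produces a \emph{unique} $v\in H_0^1(\Omega)$ with $\mathcal{A}(v,\cdot)=F$.

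Finally I would read off the statement. Setting $u:=v+g$, one has $u\in H^1(\Omega')\cap\mathcal{L}^s(\R^N)$ (since $\hat v\in H^1(\R^N)\subseteq\mathcal{L}^s(\R^N)$ and $g\in H^1(\Omega')\cap\mathcal{L}^s(\R^N)$), so property a) of Definition \ref{def:weaksubsupersol} holds with the larger open set $\Omega'$; by the equivalence recorded above, $u$ satisfies \eqref{eq:weaksubsupersol} with equality for every $\varphi\in H_0^1(\Omega)$, i.e.\ $u$ is a weak solution of $\LL u-V(x)u=f$; and $u-g=v\in H_0^1(\Omega)$ with $u\equiv g$ a.e.\ in $\R^N\setminus\Omega$, so $u$ solves $\mathrm{(D)}$. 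For \emph{uniqueness}, if $u_1,u_2$ are two solutions of $\mathrm{(D)}$ then $w:=u_1-u_2=(u_1-g)-(u_2-g)\in H_0^1(\Omega)$ and $w\equiv 0$ a.e.\ in $\R^N\setminus\Omega$; subtracting the two weak formulations gives $\mathcal{A}(w,\varphi)=0$ for all $\varphi\in H_0^1(\Omega)$, and testing with $\varphi=w$ together with coercivity forces $\|\nabla w\|_{L^2(\Omega)}=0$, hence $w\equiv 0$ by Poincar\'e. The Hilbert-space machinery here is entirely routine; the genuine point of the proof — and the step I expect to require care — is checking that all the nonlocal integrals appearing in $\mathcal{A}$ and $F$ converge and define bounded forms, in particular that the exterior datum $g$, which need not vanish near $\partial\Omega$, pairs well with test functions supported in $\Omega$, which is exactly what the estimates collected in Remark \ref{rem:solwellposed} provide.
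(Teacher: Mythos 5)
Your proposal is correct, and for the \emph{existence} part it is essentially identical to the paper's: lift the exterior datum by the Ansatz $u=v+g$ with $v\in H_0^1(\Omega)$ identified with its zero-extension, check continuity and coercivity of the bilinear form $\mathcal{A}$ (the paper's $\mathcal{B}_s$) and boundedness of the linear functional $F$ (the paper's $T_g$, with the delicate nonlocal term handled exactly as in Remark \ref{rem:solwellposed} via \eqref{eq:contDeltasu} and \eqref{eq:H01embeddingHs}), and invoke Lax--Milgram.

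Where you genuinely diverge is in the \emph{uniqueness} argument. The paper takes the difference $v=u_1-u_2$ of two solutions, observes that it is a weak supersolution with $v\geq 0$ in $\R^N\setminus\Omega$ in the weak sense, and applies the Weak Maximum Principle (Theorem \ref{thm:wmpsubsol}) twice to get $u_1\geq u_2$ and $u_2\geq u_1$. You instead note that $w=u_1-u_2\in H_0^1(\Omega)$ satisfies $\mathcal{A}(w,\varphi)=0$ for all $\varphi\in H_0^1(\Omega)$ and test with $\varphi=w$: coercivity forces $\|\nabla w\|_{L^2(\Omega)}=0$ and Poincar\'e gives $w\equiv 0$. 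The two routes are equivalent in outcome, but yours is more economical and self-contained — it uses only the coercivity estimate already in hand (equivalently, it is just the uniqueness half of Lax--Milgram), whereas the paper's version leans on the independently proven WMP, presumably because that result is needed elsewhere in the appendix and costs nothing to reuse. Either is acceptable; your choice has the pedagogical merit of not hiding the fact that uniqueness here is a purely variational phenomenon.
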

\begin{proof}[Proof (Existence).] On the real Hilbert space $H_0^1(\Omega)$, we consider the bilinear form
\begin{align*}
  \mathcal{B}_s(u,v) = \int_{\Omega}\langle \nabla u,\nabla v\rangle\,dx+\frac{C_{N,s}}{2}
  \iint_{\R^{2N}}\frac{(\hat{u}(x)-\hat{u}(y))(\hat{u}(x)-\hat{u}(y)}
  {|x-y|^{N+2s}}\,dx\,dy +\int_\Omega V(x)uv\,dx,
 \end{align*}
 and we claim that $\mathcal{B}_s$ is \emph{continuous and coercive on $H^1_0(\Omega)$}.
 \medskip
 
 \noindent -\,\,\emph{Continuity of $\mathcal{B}_s$.} First of all, by combining H\"older's inequality
 with \eqref{eq:H01embeddingHs}, we have
 \begin{align*}
  & \iint_{\R^{2N}}\frac{|\hat{u}(x)-\hat{u}(y)|\,|\hat{v}(x)-\hat{v}(y)|}{|x-y|^{N+2s}}\,dx\,dy \\
  & \qquad \leq 
  \Big(\iint_{\R^{2N}}\frac{|\hat{u}(x)-\hat{u}(y)|^2}{|x-y|^{N+2s}}\,dx\,dy\Big)^{1/2}
  \Big(\iint_{\R^{2N}}\frac{|\hat{v}(x)-\hat{v}(y)|^2}{|x-y|^{N+2s}}\,dx\,dy\Big)^{1/2} \\
  & \qquad \leq \gamma\,\|u\|_{H_0^1(\Omega)}\,\|v\|_{H_0^1(\Omega)},
 \end{align*}
 where $\gamma > 0$ is a suitable constant depending on $n,s$ and on $\Omega$, see \eqref{eq:H01embeddingHs}.
 In addition, since we are assuming $V\in L^\infty(\Omega)$, by H\"older's inequality
 and the Poincar\'e inequality (which can be applied without any further assumption, 
 since $u,v\in H_0^1(\Omega)$),
 we have
 \begin{align*}
 \int_\Omega|V(x)|\,|u|\,|v|\,dx & \leq \|V\|_{L^\infty(\Omega)}
 \int_\Omega|u|\,|v|\,dx \leq \|V\|_{L^\infty(\Omega)}\,\|u\|_{L^2(\Omega)}\,\|v\|_{L^2(\Omega)} \\
 & \leq \mathbf{c}_p(\Omega)\,\|V\|_{L^\infty(\Omega)}\,\|u\|_{H_0^1(\Omega)}\,\|v\|_{H_0^1(\Omega)},
 \end{align*}
 where $\mathbf{c}_p(\Omega) > 0$ is the Poincar\'{e} constant in $\Omega$.
 Gathering all these facts, we then infer that
 $$|\mathcal{B}_s(u,v)|\leq \big(1+\gamma+
 \mathbf{c}_p(\Omega)\|V\|_{L^\infty(\Omega)}\big)\|u\|_{H_0^1(\Omega)}
 \,\|v\|_{H_0^1(\Omega)}\equiv C\,\|u\|_{H_0^1(\Omega)}\,\|v\|_{H_0^1(\Omega)},$$
 and this proves that $\mathcal{B}_s$ is continuous on $H_0^1(\Omega)$.
 \vspace*{0.1cm}
 
 \noindent -\,\,\emph{Coercivity of $\mathcal{B}_s$.} By the very definition of $\mathcal{B}_s$, and since
 $V\geq 0$ a.e.\,in $\Omega$, we get
 \begin{align*}
  \mathcal{B}_s(u,u) & = \int_\Omega|\nabla u|^2\,dx
 + \frac{C_{N,s}}{2}\iint_{\R^{2N}}\frac{|\hat{u}(x)-\hat{u}(y)|^2}{|x-y|^{N+2s}}\,dx\,dy
 +\int_\Omega V(x)u^2\,d x\\
 & \geq \int_\Omega|\nabla u|^2\,dx = \|u\|_{H_0^1(\Omega)}^2,
 \end{align*}
 and this proves that $\mathcal{B}_s$ is coercive on $H_0^1(\Omega)$.
 \medskip
 
 \noindent We now consider the linear operator $T_g:H_0^1(\Omega)\to\R$ defined as follows
 \begin{align*}
  T_g(u) & := \int_\Omega \langle\nabla g,\nabla u\rangle\,dx
 - \frac{C_{N,s}}{2}
  \iint_{\R^{2N}}\frac{(g(x)-g(y))(\hat{u}(x)-\hat{u}(y))}{|x-y|^{N+2s}}\,dx\,dy \\
  & \qquad\qquad
  -\int_\Omega V(x)gu\,dx-
  \int_\Omega fu\,dx,
  \end{align*}
  and we observe that, since $g\in H^1(\Omega')\cap \mathcal{L}^s(\R^N)$ and since $f\in L^2(\Omega)$, 
  this operator
  $T_g$ is \emph{well-de\-fi\-ned and continuous on the space $H_0^1(\Omega)$} (see
  \eqref{eq:tousecontTg} in Remark \ref{rem:solwellposed}).
  
  In view of all these facts, we are in a position to apply the Lax-Milgram Theorem,
  ensuring the existence of a unique function $v_0\in H_0^1(\Omega)$ such that
  $$\mathcal{B}_s(v_0,\varphi) = T_g(\varphi)\quad\text{for every $\varphi\in H_0^1(\Omega)$}.$$
  Setting $u_0 := v_0+g$, it is then immediate to recognize that $u_0$
  is a weak so\-lu\-tion of the Dirichlet problem $\mathrm{(D)}$
  (in the sense of Definition \ref{def:solDirPB}). 
\medskip

\noindent \emph{Proof (Uniqueness).} Let $u_1,u_2$ be two weak solutions of
the Dirichlet problem $\mathrm{(D)}$
(in the sense of Definition \ref{def:weaksubsupersol}). Setting $v := u_1-u_2$, it is readily
seen that $v$ is a weak supersolution of the equation $\LL u-V(x)u = 0$ in $\Omega$, and $v\geq 0$
in $\R^N\setminus\Omega$ (in the weak sense); then, by the Weak Maximum Principle
in Theorem \ref{thm:wmpsubsol} we get $v\geq 0$ a.e.\,in $\Omega$, that is,
$$\text{$u_1\geq u_2$ a.e.\,in $\Omega$}.$$
On the other hand, by interchanging the roles of $u_1$ and $u_2$ we also get $u_2\geq u_1$ a.e.\,in $\Omega$.
As a consequence, since $u_1 = u_2 = g$ a.e.\,in $\R^N\setminus\Omega$ (as $u_1$ and $u_2$
are solutions of the same problem), we conclude that $u_1 = u_2$ a.e.\,in $\R^N$,
and the proof is complete.
\end{proof}
\begin{remark} \label{rem:assumptionVal}
 Let $\Omega\subseteq\R^N$ be a \emph{bounded} open set, and let 
 $V\in L^\infty(\Omega),\,\text{$V\geq 0$ a.e.\,in $\Omega$}$. Moreo\-ver,
 let $f\in L^2(\Omega)$ and let $\alpha\in\R$. Denoting by $u_\alpha$
 the unique solution of
 $$\mathrm{(D)_\alpha}\qquad\quad\begin{cases}
 \LL u -V(x)u = f & \text{in $\Omega$}, \\
 u = \alpha & \text{in $\R^N\setminus\Omega$}
 \end{cases}$$
 \emph{(}whose existence and uniqueness is guaranteed by Theorem \ref{thm:existenceDir}\emph{)}, we have
 $$\iint_{\R^{2N}}\frac{|u(x)-u(y)|^2}{|x-y|^{N+2s}}\,dx\,dy < \infty.$$
 In fact, since $u$ is a solution of $\mathrm{(D)_\alpha}$, by Definition \ref{def:solDirPB} we have
 $$\text{$u = (u-\alpha)\cdot\mathbf{1}_\Omega+\alpha$ a.e.\,in $\R^N$};$$
 thus, since $v:=(u-\alpha)\cdot\mathbf{1}_\Omega\in H^1(\R^N)$ 
 \emph{(}as $u-\alpha\in H_0^1(\Omega)$\emph{)},
 from \eqref{eq:H01embeddingHs} we obtain
 \begin{align*}
  \iint_{\R^{2N}}\frac{|u(x)-u(y)|^2}{|x-y|^{N+2s}}\,dx\,dy
  & = \iint_{\R^{2N}}\frac{|v(x)-v(y)|^2}{|x-y|^{N+2s}}\,dx\,dy
  \leq \gamma\,\|v\|_{H^1_0(\Omega)} = \gamma\,\|u-\alpha\|_{H^1_0(\Omega)},
 \end{align*}
 where $\gamma > 0$ is a suitable constant depending on $n,s$ and on $\Omega$.
 We also notice that, since the constant function $g = \alpha\in H^1(\Omega')\cap \mathcal{L}^s(\R^N)$
 \emph{for every bounded open set $\Omega'\Supset\Omega$}, we get
 $$u_\alpha\in H^1_{\mathrm{loc}}(\R^N)\cap \mathcal{L}^s(\R^N).$$
\end{remark}

\bigskip

\noindent{\bf{Acknowledgments.}} The second author is funded by the Deutsche Forschungsgemeinschaft (DFG, German Research Foundation) - SFB 1283/2 2021 - 317210226. All authors are member of the ``Gruppo Nazionale per l'Analisi Matematica, la Probabilit\'a e le loro Applicazioni'' (GNAMPA) of the ``Istituto Nazionale di Alta Matematica'' (INdAM, Italy).

\end{document}